\documentclass[11pt]{amsart}

\usepackage{amsmath, amssymb, amsthm, amsfonts, enumerate, color, comment}
\usepackage{mathrsfs}
\usepackage{parskip}
\usepackage{xcolor}


%

\usepackage{mathtools}
\mathtoolsset{showonlyrefs}

\usepackage{palatino}
\usepackage{graphicx}

\usepackage{parskip}

\numberwithin{equation}{section}
\theoremstyle{plain}
\newtheorem{Proposition}[equation]{Proposition}
\newtheorem{Corollary}[equation]{Corollary}
\newtheorem*{Corollary*}{Corollary}
\newtheorem{Theorem}[equation]{Theorem}
\newtheorem*{Theorem*}{Theorem}
\newtheorem{Lemma}[equation]{Lemma}
\theoremstyle{definition}
\newtheorem{Definition}[equation]{Definition}

\newtheorem{Example}[equation]{Example}
\newtheorem{Remark}[equation]{Remark}

\usepackage{enumitem}
\setlist[enumerate]{leftmargin=*}
\setlist[itemize]{leftmargin=*}

\setlist[enumerate,1]{label=(\alph*),font=\upshape}

\setlist[enumerate,2]{label=(\roman*),font=\upshape}

\def\C{\mathbb{C}}
\def\R{\mathbb{R}}
\def\D{\mathbb{D}}
\def\T{\mathbb{T}}
\def\N{\mathbb{N}}
\def\Z{\mathbb{Z}}

\def\K{\mathcal{K}}

\def\m{\mathcal{M}}

\newcommand{\h}{\mathcal{H}}
\newcommand{\calk}{\mathcal{K}}
\newcommand{\M}{\emph{\textbf{M}}}
\newcommand{\LL}{\emph{\textbf{L}}}
\newcommand{\II}{\mathcal{I}}
\newcommand{\U}{\mathbf{U}}

\newcommand{\CC}{\mathbf{C}}
\newcommand{\CCC}{\mathfrak{C}}
\newcommand{\f}{\emph{\textbf{f}}}
\newcommand{\J}{\emph{\textbf{J}}}
\newcommand{\I}{\emph{\textbf{I}}}

\newcommand{\A}{\emph{\textbf{A}}}

\renewcommand{\leq}{\leqslant}
\renewcommand{\geq}{\geqslant}
\renewcommand{\subset}{\subseteq}
\renewcommand{\phi}{\varphi}
\renewcommand{\vec}[1]{{\bf #1}}

\usepackage{xcolor}

\author[J. Mashreghi]{Javad Mashreghi}
\address{D\'epartement de math\'ematiques et de statistique, Universit\'e Laval, Qu\'ebec, QC,
Canada, G1K 0A6}
\email{javad.mashreghi@mat.ulaval.ca}

\author[M. Ptak]{Marek Ptak}
\address{Department of Applied Mathematics,
University of Agriculture, ul. Balicka 253c\\ 30-198 Krak\'ow, Poland.}
\email{rmptak@cyf-kr.edu.pl}

\author[W. Ross]{William T. Ross}
	\address{Department of Mathematics and Computer Science, University of Richmond, Richmond, VA 23173, USA}
	\email{wross@richmond.edu}
	
	\subjclass[2010]{ 47B35, 47B02, 47A05}

\title{Conjugations of unitary operators, I}

\keywords{Complex symmetric operators, unitary operators }

\thanks{This work was supported by the NSERC Discovery Grant (Canada), the Fullbright  Foundation, the Canada Research Chair program,  and by the Ministry of Science and
Higher Education of the Republic of Poland.}

\begin{document}

\begin{abstract}
If $U$ is a unitary operator on a separable complex Hilbert space $\mathcal{H}$, an application of the spectral theorem says there is a conjugation $C$ on $\mathcal{H}$ (an antilinear, involutive, isometry on $\mathcal{H}$) for which
$
C U C = U^{*}.$
 In this paper, we fix a unitary operator $U$ and describe {\em all} of the conjugations $C$  which satisfy this property. As a consequence of our results, we show that a subspace is hyperinvariant for $U$ if and only if it is invariant for any conjugation $C$ for which $CUC = U^{*}$. 
\end{abstract}

\maketitle

\maketitle

\section{Introduction}


A version of the spectral theorem says that any unitary operator $U$ on a separable complex Hilbert space $\mathcal{H}$ is unitarily equivalent to a multiplication operator $M_{\phi} f = \phi f$ on a Lebesgue space $L^2(\mu, X)$ \cite[p.~13]{MR2003221}. Here  $X$ is a compact Hausdorff space, $\mu$ is a finite positive Borel measure on $X$, and $\phi \in L^{\infty}(\mu, X)$ is unimodular $\mu$-almost everywhere. If $J$ is the mapping from $L^2(\mu, X)$ to itself defined by $(J f)(x) = \overline{f(x)}$, $x \in X$ (the bar denotes complex conjugation), then $J$ is an antilinear, isometric, and involutive map and is called a {\em conjugation}. Furthermore, the conjugation $J$  induces the  adjoint identity $J M_{\phi} J = M_{\overline{\phi}} = M_{\phi}^{*}$. Via unitary equivalence, given any unitary operator $U$ on $\h$, this results in a conjugation $C$ on $\mathcal{H}$ for which $C U C = U^{*}$ (see Lemma \ref{lem1.2}). In the parlance of operator theory, one says that $U$ is a {\em $C$-symmetric operator} \cite{MR2187654, MR2302518}. As it turns out,  there are many conjugations $C$ on $\mathcal{H}$ for which $C U C = U^{*}$. The goal of this paper is to describe them all.


Towards this goal, let
$$\mathscr{C}_s(U) := \{\mbox{$C$ is a conjugation on $\mathcal{H}$}: C U C = U^{*}\}.$$
As discussed in the previous paragraph, $\mathscr{C}_s(U) \not = \varnothing$ (see also Proposition \ref{GL}). On the other extreme, $\mathscr{C}_{s}(I)$, where $I$ is the identity operator on $\mathcal{H}$, is simply the set of {\em all} conjugations on $\h$. The subscript $s$, for ``symmetric'', in the above definition of $\mathscr{C}_{s}(U)$ might seem superfluous. However,  in a follow up paper to this one \cite{MPRCOUII}, we will explore the set $\mathscr{C}_{c}(U)$ (notice the subscript $c$), the conjugations on $\h$ that commute with $U$, i.e., $C U C = U$.
This paper provides several characterizations of $\mathscr{C}_s(U)$. One such characterization (Theorem \ref{th1.2}) involves the spectral theorem. When one is fortunate enough to have a concrete spectral decomposition of a unitary operator, for example when $U$ is an $n \times n$ unitary matrix, one can give a very tangible  description of $\mathscr{C}_s(U)$. In particular, Theorem \ref{kjahfgr4iojtegefeds00} says that if $U$ is an $n \times n$ unitary matrix
with spectral decomposition
$$U = W \begin{bmatrix}
\xi_1 I_{n_1} & & & \\
& \xi_2 I_{n_2} & & \\
& & \ddots  & \\
& & &  \xi_d I_{n_d}
\end{bmatrix}
 W^{*},$$
 where $W$ is an $n \times n$ unitary matrix, $\xi_1, \ldots, \xi_d$ ($1 \leq d \leq n$) are the distinct eigenvalues of $U$, $I_{n_j}$ is the $n_j \times n_j$ identity matrix, and $n_1 + n_2 + \cdots + n_d = n$, then any conjugation $C$ on $\C^n$ for which $C U C = U^{*}$ must take the form
 \begin{equation}\label{oOoooJJjJjjjj}
 C =
 W \begin{bmatrix}
 V_1 & & & \\
 & V_2 & & \\
 & & \ddots &\\
 & & & V_d
 \end{bmatrix} J W^{*},
 \end{equation}
  where
 each $V_j$ is an $n_j \times n_j$ unitary matrix satisfying $V_{j}^{t} = V_{j}$ ($t$ denotes the transpose of a matrix), and $J$ denotes the conjugation on $\C^n$ defined by
 $$J [x_1 \; x_2  \cdots x_n]^{t} = [\overline{x_1} \; \overline{x_2} \cdots \overline{x_n}]^{t}.$$
A version of \eqref{oOoooJJjJjjjj} can be obtained in  the infinite dimensional setting when the eigenvectors for $U$ are complete in $\h$. In particular, we give a concrete description of $\mathscr{C}_s(U)$ when $U$ is the Fourier--Plancherel  transform on $L^2(\R)$ (Example \ref{Foureoirtert}) and when $U$ is the Hilbert transform on $L^2(\R)$ (Example \ref{Hilertrt}). 

The main driver of nearly all the results in this paper is a measure-theoretic decomposition of any $C \in \mathscr{C}_s(U)$ given in Theorem \ref{conj_dec}. This enables us to examine manageable pieces of $C$ of $\h$ on certain reducing subspaces of $U$. 

Another description of $\mathscr{C}_s(U)$ relies  less on the spectral decomposition of a unitary operator $U$, which is often quite intangible in the general setting, and more on some special properties of $U$. For example, if $U$ is a {\em bilateral shift} (see Definition \ref{lafdhgjdfpsgszhnigbv888}), $\mathscr{C}_s(U)$ was described in \cite{MR4169409} (see also  Theorem \ref{jjHHbbhGHJK}). In particular, when $U$ is {\em the} bilateral shift $(M_{\xi} f)(\xi) = \xi f(\xi)$ on $L^2(m, \T)$ ($m$ is normalized Lebesgue measure on the unit circle $\T$), then $C \in \mathscr{C}_s(U)$ if and only if $C = M_{u} J$, where $J$ is the conjugation on $L^2(m, \T)$ given by $J f = \overline{f}$ and $u \in L^{\infty}(m, \T)$ is unimodular almost everywhere on $\T$ (see Example \ref{00o0iJjjIjiiJExampewl666}).

The unitary multiplication operator $M_{\psi}$ on $L^2(m, \T)$, where $\psi$ is an inner function (a bounded analytic function on $\D$ whose radial boundary values on $\T$ have modulus one $m$-almost everywhere \cite{Duren, Garnett}), turns out to be a bilateral shift \cite[Proposition 5.17]{JMMPWR_OT28} and a concrete description of $\mathscr{C}_s(M_{\xi})$ is given in Theorem \ref{089foidjgigifghjhjh77766}. In fact (see Remark \ref{s7s7s7s7s7s7s7}), such $M_{\psi}$ serve as  models for all bilateral shifts and so, in a way, Theorem \ref{089foidjgigifghjhjh77766}, is a canonical model for $\mathscr{C}_s(U)$ for any bilateral shift $U$.

As a byproduct of several results in this paper, we can connect conjugations with the hyperinvariant subspaces of a unitary operator $U$ on $\h$ (those subspace which are invariant for any bounded operator on $\h$ commuting with $U$). Certainly the invariant and hyperinvariant subspaces of unitary operators have been discussed before (for example \cite{DP, MR0253076, MR0669272}). Here we connect  them with conjugations in the following way. For a (closed) subspace $\mathcal{M}$ of $\h$, the following are equivalent: (i) $\mathcal{M}$ is hyperinvariant for $U$;  (ii) $C \mathcal{M} \subset \mathcal{M}$ for every $C \in \mathscr{C}_{s}(U)$. This result is obtained, along with several other equivalent conditions covered in this paper, via Theorems \ref{conj_dec},  \ref{s9dufioiskldfgf}, and \ref{new3} and is officially stated  in Theorem \ref{LFDKJgnsl;gdhf}.


\section{Basic facts about conjugations and spectral measures}\label{NxcvxcvxcvxXX}

{\em All Hilbert spaces $\h$ in this paper will be complex and separable.}  Let $\mathcal{B}(\h)$ denote the set of all bounded linear transformations on $\h$ and $\mathscr{A}\!\mathcal{B}(\mathcal{H})$ denote  the set of all {\em bounded antilinear transformations} on $\mathcal{H}$. By this we mean that
$C \in \mathscr{A}\!\mathcal{B}(\mathcal{H})$ when
$C(\vec{x} + \alpha \vec{y}) = C \vec{x} + \overline{\alpha} C \vec{y}$ for all $\vec{x}, \vec{y} \in \mathcal{H}$  and  $\alpha \in \C$ ($C$ is antilinear)
and  $\sup\{\|C \vec{x}\|: \|\vec{x}\| = 1\}$ is finite ($C$ is bounded).
We say that $C \in \mathscr{A}\!\mathcal{B}(\mathcal{H})$ is a {\em conjugation} if $C$ satisfies the two additional conditions $\|C \vec{x}\| = \|\vec{x}\|$ for all $\vec{x} \in \mathcal{H}$ ($C$ is isometric) and $C^2 = I$ ($C$ is involutive).
By the polarization identity, a conjugation also satisfies
\begin{equation}\label{CCCCCCcccc}
\langle C \vec{x}, C \vec{y}\rangle = \langle \vec{y}, \vec{x}\rangle \; \mbox{for all $\vec{x}, \vec{y} \in \mathcal{H}.$}
\end{equation}
Conjugations play an important role in operator theory and were initially studied in  \cite{MR2186351, MR2198373,  MR3254868,MR2187654, MR2302518}. More recently, conjugations were explored in  various settings and applications in \cite{MR4083641, MR4169409, MR4493877, MR4502709, MR4133627, MR4077555}.


\begin{Example}\label{exkjsdf77y}
Many types of conjugations  were outlined in \cite{MR3254868, MR2187654, MR2302518}. Below are a few basic ones that are relevant to this paper.
\begin{enumerate}
\item As discussed in the introduction, the mapping $C f = \bar{f}$ defines a conjugation on $L^2(\mu, X)$. In particular, the mapping $C [x_1 \; x_2 \; \cdots \; x_n]^{t} = [\overline{x_1} \; \overline{x_2} \; \cdots \; \overline{x_n}]^t$ defines a conjugation on $\C^n$. Throughout this paper we will use the notation $t$ to represent the transpose of a matrix. In addition,  vectors in $\C^n$ will be viewed  as column vectors since, for an $n \times n$ matrix $A$ of complex numbers, we will often consider the linear transformations on $\C^n$ defined by $\vec{x} \mapsto A \vec{x}$.
\item One can consider the conjugations 
$(C f)(t) = \overline{f(t)}$ and $(C f)(t) = \overline{f(-t)}$ on $L^2(\R)$.
These were used in \cite{Bender_2007, BB1998} to study symmetric operators and their connections to physics.
\item If $u$ is an inner function, $H^2$ is the standard Hardy space, and $\K_{u} := H^2 \ominus uH^2$ is the model space associated with $u$ (considering everything as a subspace of $L^2(m, \T)$ via radial boundary values), then $(C f)(\xi) = u(\xi) \overline{\xi f(\xi)}$ defines a conjugation on $\K_u$ \cite[Ch.~8]{MR3526203}.
\end{enumerate}
\end{Example}

This next lemma  enables us to transfer a conjugation on one Hilbert space to a conjugation on another. The (easy) proof is left to the reader.

\begin{Lemma}\label{lem1.2}
Suppose $\mathcal{H}$ and $\mathcal{K}$ are Hilbert spaces and $V: \mathcal{H} \to \mathcal{K}$ is a unitary operator. If $C$ is a conjugation on $\mathcal{H}$ then $VCV^{*}$ is a conjugation on $\mathcal{K}$.
\end{Lemma}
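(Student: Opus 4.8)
The plan is to set $D := V C V^{*}$ and check the three defining properties of a conjugation in turn. Note first that $D$ genuinely maps $\mathcal{K}$ to $\mathcal{K}$: since $V \colon \mathcal{H} \to \mathcal{K}$ is unitary we have $V^{*} \colon \mathcal{K} \to \mathcal{H}$, so $V^{*}$ feeds into $C \colon \mathcal{H} \to \mathcal{H}$, which feeds into $V \colon \mathcal{H} \to \mathcal{K}$. Each of the three verifications reduces to a one-line computation combining the linearity of $V$ and $V^{*}$ with the corresponding property of $C$.

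First I would verify antilinearity (and boundedness). For $\vec{x}, \vec{y} \in \mathcal{K}$ and $\alpha \in \C$, linearity of $V^{*}$ gives $D(\vec{x} + \alpha \vec{y}) = V C (V^{*}\vec{x} + \alpha V^{*}\vec{y})$, and then antilinearity of $C$ followed by linearity of $V$ yields $V(C V^{*}\vec{x} + \overline{\alpha}\, C V^{*}\vec{y}) = D\vec{x} + \overline{\alpha}\, D\vec{y}$; boundedness of $D$ is clear since $C$ is bounded and $\|V\| = \|V^{*}\| = 1$. Next, for the isometric property, one uses that $V$ and $V^{*}$ are isometries (this is where unitarity, not merely isometry, of $V$ enters) together with the isometric property of $C$ to get $\|D\vec{x}\| = \|C V^{*}\vec{x}\| = \|V^{*}\vec{x}\| = \|\vec{x}\|$ for every $\vec{x} \in \mathcal{K}$. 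Finally, for involutivity, $D^{2} = V C V^{*} V C V^{*} = V C (V^{*}V) C V^{*} = V C^{2} V^{*} = V V^{*} = I_{\mathcal{K}}$, invoking $V^{*}V = I_{\mathcal{H}}$, $C^{2} = I_{\mathcal{H}}$, and $V V^{*} = I_{\mathcal{K}}$.

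There is essentially no obstacle here: the computations are immediate, and the only thing worth watching is the bookkeeping of which identity, $V^{*}V = I_{\mathcal{H}}$ or $V V^{*} = I_{\mathcal{K}}$, is used where — these are precisely the steps that require $V$ to be unitary rather than merely an isometry.
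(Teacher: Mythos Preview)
Your proof is correct. The paper does not actually give a proof of this lemma; it states ``The (easy) proof is left to the reader,'' and your three-step verification of antilinearity, isometry, and involutivity is precisely the routine check intended.
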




For a conjugation $C$ on $\h$ and an $A \in \mathcal{B}(\h)$, we say that $A$ is {\em $C$-symmetric} if $C A C = A^{*}$. A multitude of operators from a variety of settings enjoy this property \cite{MR3254868, MR2187654,  MR2535469, MR2661508}. Here are a few examples that are relevant to this paper.

\begin{Example}\label{9ioijHVVVGS}
\begin{enumerate}
\item If $\phi \in L^{\infty}(\mu, X)$ and $M_{\phi}$ denotes the multiplication operator $M_{\phi} f = \phi f$ on $L^2(\mu, X)$, then the conjugation $C$ from Example \ref{exkjsdf77y}(a) satisfies 
$C M_{\phi} C = M_{\overline{\phi}} = M_{\phi}^{*}.$
Thus, $M_{\phi}$ is $C$-symmetric.
 For a normal operator $N \in \mathcal{B}(\h)$, the spectral theorem says that $N$ is unitarily equivalent to $M_{\phi}$ on $L^2(\mu, X)$ for some  finite positive Borel measure on some compact Hausdorff space $X$. Now use the discussion above and Lemma \ref{lem1.2} to see that $N$ is a $C$-symmetric operator for some conjugation $C$ on $\h$.
\item The conjugation $(Cf)(x) = \overline{f(-x)}$ from Example \ref{exkjsdf77y}(b) makes the translation operator $(U f)(x) = f(x - 1)$ on $L^2(\R)$ a $C$-symmetric unitary operator. This conjugation also makes the Hilbert transform a $C$-symmetric unitary operator. 

\item The conjugation $(C f)(t) = \overline{f(t)}$ from Example \ref{exkjsdf77y}(b) makes the Fourier--Plancherel transform a $C$-symmetric unitary operator on $L^2(\R)$. 
\end{enumerate}
\end{Example}

Though not used in this paper, but certainly in our follow up paper  \cite{MPRCOUII}, we recall the following result from \cite{MR0190750} which shows that any unitary operator can be built from conjugations. We include a short proof (slightly different from the original)  for the reader's convenience.

\begin{Proposition}\label{GL} For each unitary operator $U$ on $\h$, there are conjugations $J_1$ and $J_2$ on $\mathcal{H}$ such that $U=J_1J_2$. Moreover, $J_1, J_2 \in \mathscr{C}_{s}(U)$.
\end{Proposition}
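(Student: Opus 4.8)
The plan is to reduce everything to the model case via the spectral theorem and then exhibit the two conjugations explicitly there. First I would invoke the version of the spectral theorem quoted in the introduction to write $U$ as unitarily equivalent to a multiplication operator $M_{\phi}$ on $L^2(\mu, X)$, with $\phi$ unimodular $\mu$-a.e.; by Lemma \ref{lem1.2} it suffices to produce the conjugations $J_1, J_2$ for $M_{\phi}$, since conjugating them back by the implementing unitary preserves all the required properties (conjugation, and the $C$-symmetry relation, because $V C U C V^{*} = (VCV^{*})(VUV^{*})(VCV^{*})$). So from now on we work on $L^2(\mu, X)$ with $U = M_{\phi}$.

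The key observation is that one can take a ``square root'' of $\phi$ in the measurable category. Since $\phi$ is unimodular a.e., write $\phi = e^{i\theta}$ for a measurable real-valued $\theta$ on $X$ (choose, say, $\theta = \arg \phi$ with values in $[0, 2\pi)$), and set $\psi := e^{i\theta/2}$, so that $\psi$ is unimodular a.e.\ and $\psi^2 = \phi$ a.e. Now define $J_2 := M_{\psi} J$ and $J_1 := M_{\phi} J_2 = M_{\phi} M_{\psi} J$, where $J$ is the basic conjugation $Jf = \overline{f}$ from Example \ref{exkjsdf77y}(a). The candidate factorization $U = J_1 J_2$ then holds essentially by construction; the main work is to check that $J_1$ and $J_2$ are genuinely conjugations and that both lie in $\mathscr{C}_s(U)$.

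For that I would record the single computational fact that for any unimodular $w \in L^{\infty}(\mu, X)$ the map $M_w J$ is a conjugation: it is antilinear as the composite of the antilinear $J$ with the linear $M_w$; it is isometric since $|w| = 1$ a.e.\ and $J$ is isometric; and it is involutive because $(M_w J)(M_w J) f = w \overline{w \overline{f}} = w \overline{w} f = f$, using that $J M_w = M_{\overline w} J$. Applying this with $w = \psi$ gives that $J_2$ is a conjugation, and applying it with $w = \phi \psi$ (again unimodular) gives that $J_1$ is a conjugation. Next, $J_1 J_2 = M_{\phi\psi} J M_{\psi} J = M_{\phi\psi} M_{\overline\psi} = M_{\phi |\psi|^2} = M_{\phi} = U$, as desired. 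Finally, to see $J_2 \in \mathscr{C}_s(U)$, compute $J_2 U J_2 = M_{\psi} J M_{\phi} J M_{\overline\psi} = M_{\psi} M_{\overline\phi} M_{\overline\psi} = M_{\overline\phi |\psi|^2} = M_{\overline\phi} = M_{\phi}^{*} = U^{*}$; the computation for $J_1$ is identical with $\psi$ replaced by $\phi\psi$, since all that was used is that the multiplier is unimodular. Then $J_1 = U J_2 \in \mathscr{C}_s(U)$ as well, which finishes the model case, and Lemma \ref{lem1.2} transports the conclusion back to the original $U$.

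The only genuine subtlety is the measurable selection of the square root $\psi$ of $\phi$, i.e.\ that $\theta = \arg\phi$ can be chosen measurably; this is routine since $\arg$ restricted to $\T$ (suitably branched) is Borel, so $\theta$ is the composition of a Borel function with the measurable function $\phi$. Everything else is the handful of algebraic identities involving $J M_w = M_{\overline w} J$ recorded above, so I do not expect any real obstacle beyond being careful that the factors are unimodular at each step.
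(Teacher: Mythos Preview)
Your argument is correct, but it is more elaborate than necessary and differs in execution from the paper's proof. Both approaches start the same way: use the spectral theorem to pass to $M_{\phi}$ on $L^2(\mu,X)$ and exploit that $M_w J$ is a conjugation whenever $w$ is unimodular. The paper, however, never takes a square root. It simply picks any $J_1 \in \mathscr{C}_s(U)$ (in the model, $J_1 = J$ itself) and then sets $J_2 := U^{*} J_1$; one checks in two lines that $J_2$ is a conjugation, that $J_2 U J_2 = U^{*}$, and that $J_1 J_2 = J_1 U^{*} J_1 = U$. Your measurable square root $\psi$ works, but it is doing no real work: if you set $\psi \equiv 1$ in your own computation (so $J_2 = J$ and $J_1 = M_{\phi} J$), every identity you wrote still goes through verbatim, since all you used about $\psi$ was $|\psi| = 1$, never $\psi^2 = \phi$. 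The advantage of the paper's formulation is that the step ``$J_2 := U^{*} J_1$'' is stated and verified abstractly on $\mathcal{H}$, so one does not even need to transport back from the model via Lemma~\ref{lem1.2}.
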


\begin{proof}
For a given unitary $U$ on $\h$, we can use the spectral theorem argument from Example \ref{9ioijHVVVGS}(a) (and also from the introduction) to produce a conjugation $J_1$ on $\h$ such that $J_1 U J_1 = U^{*}$. Now observe that $J_2 := U^{*} J_1$ is a conjugation, $J_2 U J_2 = U^{*}$, and $J_1 J_2 = U$.
\end{proof}

The following result will be used from time to time in this paper. 

\begin{Proposition}\label{bnjiovbhucfgyu}
Suppose $U, V, W$ are unitary operators on $\h$ such that $W U W^{*} = V$. Then $W \mathscr{C}_{s}(U) W^{*} = \mathscr{C}_{s}(V)$.
\end{Proposition}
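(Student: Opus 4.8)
The plan is to prove the set equality $W \mathscr{C}_{s}(U) W^{*} = \mathscr{C}_{s}(V)$ by a direct double inclusion argument, exploiting the fact that conjugacy by a fixed unitary $W$ is an involutive bijection on the collection of bounded antilinear maps. First I would observe that by Lemma \ref{lem1.2}, if $C$ is a conjugation on $\h$ then $WCW^{*}$ is again a conjugation on $\h$; this handles the ``conjugation'' part of membership automatically, so the only thing to track through the argument is the symmetry relation $C U C = U^{*}$ versus $(WCW^{*}) V (WCW^{*}) = V^{*}$.

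Next I would verify the forward inclusion $W \mathscr{C}_{s}(U) W^{*} \subset \mathscr{C}_{s}(V)$. Take $C \in \mathscr{C}_{s}(U)$, so $CUC = U^{*}$, and set $C' := WCW^{*}$. Since $W$ is unitary, $W^{*}W = WW^{*} = I$, so
\begin{equation}
C' V C' = W C W^{*} \, V \, W C W^{*} = W C (W^{*} V W) C W^{*} = W C U C W^{*} = W U^{*} W^{*},
\end{equation}
where I used the hypothesis $WUW^{*} = V$, equivalently $W^{*}VW = U$, in the middle. Finally $W U^{*} W^{*} = (W U W^{*})^{*} = V^{*}$, so $C' \in \mathscr{C}_{s}(V)$. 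One small point to be careful about: in the step $WCW^{*} V W C W^{*}$ I am inserting $C(W^{*}V W)C$ in place of $C W^{*} V W C$, which is just associativity of composition of (anti)linear maps; no linearity of $C$ is needed here since we are only regrouping the composition, not distributing over sums.

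For the reverse inclusion I would note that the hypothesis is symmetric in the sense that $W^{*} V W = U$ with $W^{*}$ unitary, so applying the forward inclusion already proved, but with the roles $(U,V,W)$ replaced by $(V,U,W^{*})$, gives $W^{*} \mathscr{C}_{s}(V) W \subset \mathscr{C}_{s}(U)$. Conjugating both sides by $W$ (i.e., applying the map $X \mapsto WXW^{*}$, which is a bijection on subsets of $\mathscr{A}\!\mathcal{B}(\h)$ with inverse $X \mapsto W^{*}XW$) yields $\mathscr{C}_{s}(V) \subset W \mathscr{C}_{s}(U) W^{*}$. Combining the two inclusions gives the claimed equality.

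There is really no serious obstacle here; the result is essentially a bookkeeping statement about how the relation $CUC = U^{*}$ transforms under unitary conjugation, and the only thing to get right is the placement of $W$ versus $W^{*}$ and the observation that Lemma \ref{lem1.2} already guarantees $WCW^{*}$ is a conjugation. If one prefers to avoid invoking the ``swap the roles'' symmetry for the reverse inclusion, one can instead argue directly: given $D \in \mathscr{C}_{s}(V)$, set $C := W^{*} D W$, check via Lemma \ref{lem1.2} (applied with the unitary $W^{*}$) that $C$ is a conjugation, compute $C U C = W^{*} D W (W^{*} V W) W^{*} D W = W^{*} D V D W = W^{*} V^{*} W = U^{*}$, and note $D = W C W^{*}$, so $D \in W \mathscr{C}_{s}(U) W^{*}$. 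Either route is short; I would likely present the direct version for both inclusions to keep the proof self-contained.
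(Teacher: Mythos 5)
Your proof is correct; the paper states this proposition without proof (treating it as routine), and your double-inclusion argument via Lemma \ref{lem1.2} and the computation $WCW^{*}\,V\,WCW^{*} = WCUCW^{*} = WU^{*}W^{*} = V^{*}$ is exactly the standard verification the authors evidently had in mind. The point you flag about only using associativity of composition (not linearity of $C$) when regrouping is the right thing to be careful about, and it holds.
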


The following result from  \cite[Lemma 3.2]{MR2966041}, which will be generalized below, gives an explicit description of  all the conjugations on $\C^n$.

\begin{Proposition}\label{KHADFKDJSHF}
A mapping $C$ on $\C^n$ is a conjugation  if and only if $C = V J$, where $V$ is an $n \times n$ unitary matrix with $V^{t} = V$ and $J$ is the conjugation on $\C^n$ defined by $J[x_1\; x_2\; \cdots\; x_n]^{t} = [\overline{x_1}\; \overline{x_2}\; \cdots\; \overline{x_n}]^{t}$, i.e., 
$$C[x_1 \; x_2 \; \cdots x_n]^{t} = V [\overline{x_1} \; \overline{x_2} \; \cdots \overline{x_n}]^{t}.$$
\end{Proposition}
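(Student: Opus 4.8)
The plan is to verify the two directions of the equivalence directly, using the standard bijective correspondence between antilinear maps on $\C^n$ and matrices that is obtained by composing with the coordinatewise conjugation $J$. First I would record the elementary observations that $J$ is itself a conjugation on $\C^n$ and that $J^2 = I$, so that for any antilinear map $C$ on $\C^n$ the composition $V := CJ$ is a \emph{linear} map on $\C^n$ (antilinear $\circ$ antilinear is linear), hence given by an $n\times n$ matrix; conversely, given any matrix $V$, the map $VJ$ is antilinear. Thus every antilinear map on $\C^n$ is uniquely of the form $VJ$ for an $n\times n$ matrix $V$, and it remains only to translate the two extra conditions on a conjugation — isometric and involutive — into conditions on $V$.

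For the forward direction, suppose $C = VJ$ is a conjugation. Isometry of $C$ combined with isometry of $J$ forces $V$ to be an isometry of $\C^n$, i.e.\ a unitary matrix: indeed $\|V\vec{x}\| = \|VJ(J\vec x)\| = \|C(J\vec x)\| = \|J\vec x\| = \|\vec x\|$ for all $\vec x$. For the symmetry condition I would use $C^2 = I$: compute $C^2 = VJVJ = V\overline{V}JJ = V\overline{V}$, where $\overline{V}$ denotes the entrywise complex conjugate of $V$ (this is exactly the identity $JVJ = \overline V$ for a linear map, which I would state as a one-line remark). Hence $C^2 = I$ is equivalent to $V\overline{V} = I$, i.e.\ $\overline{V} = V^{-1}$. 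Now combine this with unitarity $V^{-1} = V^{*} = \overline{V}^{\,t}$ (the conjugate transpose) to get $\overline V = \overline V^{\,t}$, i.e.\ $V = V^{t}$; so $V$ is a symmetric unitary matrix. For the converse direction, one runs the same computations backward: if $V$ is unitary then $VJ$ is an antilinear isometry, and if in addition $V^{t}=V$ then $\overline V = \overline V^{\,t} = V^{*} = V^{-1}$, so $(VJ)^2 = V\overline V = I$; therefore $VJ$ is a conjugation.

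The only mildly delicate point — and the one I would state explicitly rather than leave to the reader — is the bookkeeping identity $JVJ = \overline{V}$ for a matrix $V$ acting linearly on column vectors, together with keeping straight the three operations transpose, conjugate, and conjugate-transpose; once that identity is in hand, both implications are a two-line manipulation. There is no real obstacle here: this is the content already cited from \cite[Lemma 3.2]{MR2966041}, and the argument above is the standard one. (Alternatively, one could deduce it as the $\mathcal H = \C^n$ case of the more general decomposition theorem for conjugations proved later in the paper, but the self-contained matrix computation is shorter and is the natural place to start.)
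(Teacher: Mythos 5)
Your proof is correct, and it is essentially the argument the paper itself uses: the paper only cites this finite-dimensional statement from the literature, but its proof of the general version (Proposition \ref{9we8rouigjfledw}) runs exactly your computation, with your identity $JVJ=\overline{V}$ appearing there as \eqref{popoPOPOPOss} in matrix form. Nothing is missing.
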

Our discussion below needs a generalization of  the previous result for $\C^n$ to an abstract (separable) Hilbert  space $\h$, but, of course, we need a substitute for $V^{t}$ since the ``transpose'' of a linear operator $V$ on $\h$ needs a proper definition (as opposed to the adjoint $V^{*}$ which has a clear and well understood definition). For a fixed orthonormal basis $\mathscr{B} = \{\vec{u}_j\}_{j \geq 1}$ for $\h$ and an $A \in \mathcal{B}(\h)$, we use the notation 
\begin{equation}\label{kkKDSFdsf}
[A]_{\mathscr{B}} = [a_{mn}]_{m, n \geq 1} = [\langle A \vec{u}_{n}, \vec{u}_{m}\rangle]_{m, n \geq 1}
\end{equation}
 to denote the matrix representation of $A$ with respect to $\mathscr{B}$, considered as a bounded operator on the sequence space
 \begin{equation}\label{zoozZZZZellllll}
\ell^{2}_{+} := \Big\{\vec{x} = [x_1 \; x_2 \; \cdots]^{t}, x_j \in \C: \|\vec{x}\|_{\ell^{2}_{+}} = \Big(\sum_{j  \geq 1} |x_j|^2\Big)^{1/2} < \infty\Big\}
\end{equation}
by $\vec{x} \mapsto [A]_{\mathscr{B}} \vec{x}$. 
For this fixed orthonormal basis $\mathscr{B}$, we also define a conjugation $J_{\mathscr{B}}$ on $\h$ by
\begin{equation}\label{JJJBBBB}
J_{\mathscr{B}}\Big(\sum_{j \geq 1} c_j \vec{u}_{j}\Big) = \sum_{j \geq 1} \overline{c_j} \vec{u}_j.
\end{equation}
In other words, $J_{\mathscr{B}}$ fixes every basis element $\vec{u}_j$ and extends antilinearly to $\h$.
 Here is a version of Proposition \ref{KHADFKDJSHF} for a general, possibly infinite dimensional, separable Hilbert space.

\begin{Proposition}\label{9we8rouigjfledw}
For an orthonormal basis $\mathscr{B} = \{\vec{u}_j\}_{j \geq 1}$ for a separable Hilbert space  $\h$, the mapping $C$ is a conjugation on $\h$ if and only if there is a unitary operator $V$ on $\h$ such that $[V]_{\mathscr{B}}^{t} = [V]_{\mathscr{B}}$ and $C = V\!J_{\mathscr{B}}$.
\end{Proposition}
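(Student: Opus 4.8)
\section*{Proof proposal}

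The plan is to pivot everything on the substitution $V := C J_{\mathscr{B}}$ (equivalently $C = V J_{\mathscr{B}}$), using $J_{\mathscr{B}}^2 = I$, and then to translate the involutivity $C^2 = I$ into the symmetry of the matrix $[V]_{\mathscr{B}}$ by a short matrix computation. The one technical point to keep straight is the matrix of the antilinear sandwich $J_{\mathscr{B}} V J_{\mathscr{B}}$: since $J_{\mathscr{B}} \vec{u}_j = \vec{u}_j$, formulas \eqref{kkKDSFdsf} and \eqref{JJJBBBB} give at once that $J_{\mathscr{B}} V J_{\mathscr{B}}$ is the \emph{bounded linear} operator whose matrix with respect to $\mathscr{B}$ is the entrywise conjugate $\overline{[V]_{\mathscr{B}}}$. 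Combined with the always-true identity $[V^{*}]_{\mathscr{B}} = \overline{[V]_{\mathscr{B}}^{\,t}}$ (adjoint $=$ conjugate transpose), this shows that the hypothesis $[V]_{\mathscr{B}}^{\,t} = [V]_{\mathscr{B}}$ is equivalent to the operator identity $J_{\mathscr{B}} V J_{\mathscr{B}} = V^{*}$.

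For the ``if'' direction, suppose $V$ is unitary with $[V]_{\mathscr{B}}^{\,t} = [V]_{\mathscr{B}}$ and set $C = V J_{\mathscr{B}}$. Then $C$ is antilinear (composition of the linear $V$ with the antilinear $J_{\mathscr{B}}$) and isometric (both $V$ and $J_{\mathscr{B}}$ are). For involutivity, invoke the equivalence from the previous paragraph to write $J_{\mathscr{B}} V J_{\mathscr{B}} = V^{*}$, whence
$C^{2} = V J_{\mathscr{B}} V J_{\mathscr{B}} = V (J_{\mathscr{B}} V J_{\mathscr{B}}) = V V^{*} = I$. Thus $C$ is a conjugation.

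For the ``only if'' direction, let $C$ be a conjugation and put $V := C J_{\mathscr{B}}$. As a composition of two antilinear maps, $V$ is linear, and it is isometric since $C$ and $J_{\mathscr{B}}$ both are. It is invertible: a direct check using $C^{2} = J_{\mathscr{B}}^{2} = I$ shows that $J_{\mathscr{B}} C$ is a two-sided inverse of $V$. An invertible isometry on a Hilbert space is unitary, so $V$ is unitary, and $V J_{\mathscr{B}} = C J_{\mathscr{B}}^{2} = C$. Finally, for the symmetry of $[V]_{\mathscr{B}}$: since $J_{\mathscr{B}} \vec{u}_{j} = \vec{u}_{j}$, the $(m,n)$ entry of $[V]_{\mathscr{B}}$ is $\langle V \vec{u}_{n}, \vec{u}_{m}\rangle = \langle C \vec{u}_{n}, \vec{u}_{m}\rangle$; now write $\vec{u}_{m} = C(C\vec{u}_{m})$ and apply the conjugation identity \eqref{CCCCCCcccc} to get $\langle C\vec{u}_{n}, C(C\vec{u}_{m})\rangle = \langle C \vec{u}_{m}, \vec{u}_{n}\rangle$, which is the $(n,m)$ entry of $[V]_{\mathscr{B}}$. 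Hence $[V]_{\mathscr{B}}^{\,t} = [V]_{\mathscr{B}}$.

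I do not expect a genuine obstacle here: the statement is the Hilbert-space analogue of the finite-dimensional Proposition~\ref{KHADFKDJSHF}, and once the substitution $V = C J_{\mathscr{B}}$ is made everything reduces to bookkeeping. The only care needed is in the antilinear matrix computation for $J_{\mathscr{B}} V J_{\mathscr{B}}$ and in noting that ``$[V]_{\mathscr{B}}^{\,t} = [V]_{\mathscr{B}}$'' is to be read as an (unambiguous, entrywise) equality of infinite matrices rather than an a priori statement about a bounded ``transpose'' operator.
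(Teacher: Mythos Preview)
Your proof is correct and follows essentially the same route as the paper's: both set $V = C J_{\mathscr{B}}$, verify unitarity of $V$ and the identity $C = V J_{\mathscr{B}}$, and then reduce the symmetry condition $[V]_{\mathscr{B}}^{t} = [V]_{\mathscr{B}}$ to the operator identity $J_{\mathscr{B}} V J_{\mathscr{B}} = V^{*}$ via the entrywise-conjugate computation $[J_{\mathscr{B}} V J_{\mathscr{B}}]_{\mathscr{B}} = \overline{[V]_{\mathscr{B}}}$. The only cosmetic differences are that you invoke the equivalence $[V]_{\mathscr{B}}^{t} = [V]_{\mathscr{B}} \Leftrightarrow J_{\mathscr{B}} V J_{\mathscr{B}} = V^{*}$ up front and check surjectivity of $V$ by exhibiting $J_{\mathscr{B}} C$ as its inverse, whereas the paper carries out the matrix computation for $C^{2}$ explicitly and simply asserts that $V$ is onto.
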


\begin{proof}
Suppose $C$ is a conjugation on $\h$. Then $V = C\!J_{\mathscr{B}}$ is a unitary operator on $\h$ (since it is linear, isometric, and onto). Moreover, applying \eqref{JJJBBBB} and then \eqref{CCCCCCcccc}, we see that 
\begin{align*}
\langle V \vec{u}_{m}, \vec{u}_{n}\rangle & = \langle C\!J_{\mathscr{B}} \vec{u}_{m}, \vec{u}_{n}\rangle\\
& = \langle C \vec{u}_{m}, \vec{u}_{n}\rangle\\
& = \langle C \vec{u}_{n}, \vec{u}_{m}\rangle\\
& = \langle C\!J_{\mathscr{B}} \vec{u}_{n}, \vec{u}_{m}\rangle\\
& = \langle V \vec{u}_{n}, \vec{u}_{m}\rangle.
\end{align*}
Thus $[V]_{\mathscr{B}}^{t} = [V]_{\mathscr{B}}$ and $C = V\!J_{\mathscr{B}}$.

Conversely, if $V$ is a unitary operator on $\h$ with $[V]_{\mathscr{B}}^{t} = [V]_{\mathscr{B}}$, define $C = V\!J_{\mathscr{B}}$. Observe that $C$ is antilinear and isometric. We just need to verify that $C^2 = I$. Since $C$ is antilinear, note that $C^2 = V\!J_{\mathscr{B}} V\!J_{\mathscr{B}}$ is a bounded {\em linear} operator on $\h$. Moreover, since $J_{\mathscr{B}} V\!J_{\mathscr{B}}$ is also a bounded linear operator on $\h$, we see that 
\begin{equation}\label{bBBbbBbBBB}
[C^2]_{\mathscr{B}} = [V]_{\mathscr{B}} [J_{\mathscr{B}} V\!J_{\mathscr{B}}]_{\mathscr{B}}.
\end{equation}
Now observe that for all $m, n \geq 1$, again making use of  \eqref{CCCCCCcccc} and \eqref{JJJBBBB},
\begin{align*}
\langle J_{\mathscr{B}} V\!J_{\mathscr{B}} \vec{u}_{m}, \vec{u}_{n}\rangle & = \langle J_{\mathscr{B}} V \vec{u}_{m}, \vec{u}_{n}\rangle\\
& = \langle J_{\mathscr{B}} \vec{u}_{n}, V \vec{u}_{m}\rangle\\
& = \langle \vec{u}_n, V \vec{u}_m\rangle\\
& = \overline{\langle V \vec{u}_{m}, \vec{u}_{n}\rangle}.
\end{align*}
Thus, from our assumption that  $[V]_{\mathscr{B}}^{t} = [V]_{\mathscr{B}}$, we see that 
\begin{equation}\label{popoPOPOPOss}
[J_{\mathscr{B}} V\!J_{\mathscr{B}}]_{\mathscr{B}} = \overline{[V]_{\mathscr{B}}} = \overline{[V]_{\mathscr{B}}^{t}} = [V^{*}]_{\mathscr{B}}.
\end{equation}
From \eqref{bBBbbBbBBB} it follows that $[C^2]_{\mathscr{B}} = [I]_{\mathscr{B}}$ and so $C^2 = I$. Thus, $C$ is a conjugation on $\h$.
\end{proof}

\begin{Remark}
If $J$ is {\em any} conjugation on $\h$, one can show  there exists an orthonormal basis $\mathscr{B}$ for $\h$ such that $J = J_{\mathscr{B}}$ \cite[Lemma 1]{MR2187654}. Thus,  there is some freedom in the above analysis, if so desired, to choose a conjugation $J$ first  instead of the orthonormal basis $\mathscr{B}$.
\end{Remark}



A version of the spectral theorem for unitary operators  (see \cite[Ch.~IX, Thm.~2.2]{ConwayFA}  or \cite[Ch.~1]{MR2003221}) says that if $U$ is a unitary operator on $\mathcal{H}$, then there is a unique spectral measure $E(\cdot)$  defined on the Borel subsets of $\sigma(U)$, the spectrum of $U$, such that
\begin{equation}\label{9sectraloth}
U = \int_{\sigma(U)} \xi d E(\xi).
\end{equation}
Moreover, for any spectral measure $E(\cdot)$ on $\T$  (i.e., $E(\cdot)$ is a projection-valued function on the Borel subsets of $\T$ such that $E(\T) = I$ and $E(\cdot)$ is countably additive), there is a unique unitary operator $U$ associated  with $E(\cdot)$ via \eqref{9sectraloth}.

 For a spectral measure $E(\cdot)$ and $\vec{x}, \vec{y} \in \mathcal{H}$, the function
$\mu_{\vec{x}, \vec{y}}(\cdot) := \langle E(\cdot) \vec{x}, \vec{y}\rangle$
 defines a finite complex Borel measure on $\T$ and, in particular, for each  $\vec{x} \in \h$,
\begin{equation}\label{67uyh87877YUYUY}
\mu_{\vec{x}}:=\mu_{\vec{x},\vec{x}}
\end{equation} defines a positive finite Borel  measure on $\T$, sometimes called an {\it elementary measure}.


The following proposition  from \cite{MR0190750} describes the relationship between a unitary operator, its spectral measure, and a conjugation.

\begin{Proposition}\label{4}
For a conjugation $C$ and a unitary operator $U$ on $\h$ with associated spectral measure $E(\cdot)$, the following are equivalent. 
\begin{enumerate}
\item $C \in \mathscr{C}_{s}(U)$; 
\item  $C E(\Omega) C = E(\Omega)$ for every Borel set $\Omega \subset \sigma(U)$.
\end{enumerate}
\end{Proposition}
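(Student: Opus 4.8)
The plan is to view the statement as an expression of the \emph{uniqueness} half of the spectral theorem. Set $E'(\Omega) := C\,E(\Omega)\,C$ for every Borel $\Omega \subseteq \sigma(U)$ (equivalently on all Borel subsets of $\T$, with $E$ extended by zero off $\sigma(U)$). The heart of the argument is the following pair of assertions, neither of which uses the hypothesis (a) or (b): first, $E'(\cdot)$ is a spectral measure supported on $\sigma(U)$; second, $\int_{\sigma(U)} \xi\, dE'(\xi) = C U^{*} C$. Once these are in hand the proposition is immediate: conjugating an operator identity by $C$ and using $C^{2} = I$ shows that $CUC = U^{*}$ is equivalent to $CU^{*}C = U$; hence $C \in \mathscr{C}_{s}(U)$ iff $CU^{*}C = U$ iff (by the second assertion and $\int \xi\,dE = U$) the spectral measures $E'$ and $E$ represent the same unitary operator, iff $E' = E$ by uniqueness, iff $CE(\Omega)C = E(\Omega)$ for every Borel $\Omega$, which is (b).

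For the first assertion I would verify the defining properties of a spectral measure one at a time. Each $E'(\Omega)$ is idempotent, since $E'(\Omega)^{2} = CE(\Omega)C\,CE(\Omega)C = CE(\Omega)^{2}C = CE(\Omega)C = E'(\Omega)$, using $C^{2} = I$; it is self-adjoint, because $(CAC)^{*} = CA^{*}C$ for every bounded linear $A$ (a one-line consequence of \eqref{CCCCCCcccc}) and $E(\Omega)$ is self-adjoint; and $E'(\T) = CIC = C^{2} = I$. Countable additivity in the strong operator topology transfers from $E$ because $C$, being a bounded antilinear isometry, is continuous and additive and hence commutes with norm-convergent sums. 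Finally $E'(\T \setminus \sigma(U)) = C\,0\,C = 0$, which gives the support statement.

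For the second assertion I would compare quadratic forms. On one side, \eqref{CCCCCCcccc} gives $\langle E'(\Omega)\vec{x}, \vec{x}\rangle = \langle CE(\Omega)C\vec{x}, \vec{x}\rangle = \langle C\vec{x}, E(\Omega)C\vec{x}\rangle = \mu_{C\vec{x}}(\Omega)$, so that $\langle (\int \xi\,dE'(\xi))\vec{x}, \vec{x}\rangle = \int \xi\, d\mu_{C\vec{x}}(\xi)$. On the other side, again by \eqref{CCCCCCcccc} together with $U^{*} = \int \bar{\xi}\,dE(\xi)$, one gets $\langle CU^{*}C\vec{x}, \vec{x}\rangle = \langle C\vec{x}, U^{*}C\vec{x}\rangle = \overline{\int \bar{\xi}\, d\mu_{C\vec{x}}(\xi)} = \int \xi\, d\mu_{C\vec{x}}(\xi)$, the last step using that $\mu_{C\vec{x}}$ is a positive measure. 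Since $\int \xi\,dE'(\xi)$ and $CU^{*}C$ are bounded linear operators with the same quadratic form, the polarization identity over $\C$ yields $\int \xi\,dE'(\xi) = CU^{*}C$.

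The one place that genuinely needs care — and the step I would check most carefully — is the bookkeeping forced by antilinearity: each passage of $C$ across an inner product invokes the flip in \eqref{CCCCCCcccc} (and sometimes a complex conjugate), and the map $A \mapsto CAC$ is conjugate-linear and multiplicative but $*$-preserving and involutive. In particular I would double-check the identity $(CAC)^{*} = CA^{*}C$ and the two quadratic-form computations above. As a consistency check (and as a self-contained route for (b)$\Rightarrow$(a), should one prefer to bypass $E'$): from (b) one has $E(\Omega)C = CE(\Omega)$ for all $\Omega$, so $\mu_{C\vec{x}} = \mu_{\vec{x}}$ for every $\vec{x}$, whence $\langle CUC\vec{x}, \vec{x}\rangle = \int \bar{\xi}\, d\mu_{C\vec{x}}(\xi) = \int \bar{\xi}\, d\mu_{\vec{x}}(\xi) = \langle U^{*}\vec{x}, \vec{x}\rangle$ and polarization finishes it — morally the Riemann-sum identity $CUC = \lim \sum \bar{\xi}_{k}\, CE(\Omega_{k})C = \lim \sum \bar{\xi}_{k}\, E(\Omega_{k}) = U^{*}$.
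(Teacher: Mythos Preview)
Your argument is correct. The paper itself does not supply a proof of Proposition~\ref{4}; it simply quotes the result from \cite{MR0190750} (Godi\v{c}--Lucenko). So there is nothing in the paper to compare your proof against line by line.

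That said, your route via uniqueness of the spectral measure is exactly the natural one, and it is essentially the argument one finds in the cited source. A couple of minor remarks on your write-up: when you write $\langle C\vec{x}, E(\Omega)C\vec{x}\rangle = \mu_{C\vec{x}}(\Omega)$ you are silently using that $\mu_{C\vec{x}}$ is real-valued (strictly, $\langle C\vec{x}, E(\Omega)C\vec{x}\rangle = \overline{\mu_{C\vec{x}}(\Omega)}$); and in verifying that $E'(\cdot)$ is a spectral measure you might also record the multiplicativity $E'(\Omega_1)E'(\Omega_2) = E'(\Omega_1 \cap \Omega_2)$, which is equally immediate from $C^{2} = I$. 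Your closing ``Riemann-sum'' consistency check for $(b)\Rightarrow(a)$ is a clean alternative and could stand alone as the proof of that direction.
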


\section{Decompositions of conjugations}\label{sdfourourrr}

As we will see in subsequent sections, the set $\mathscr{C}_s(U)$ is quite large and so an important step in understanding it is to decompose each $C \in \mathscr{C}_{s}(U)$  into more manageable pieces. This decomposition will involve various types of invariant subspaces. Recall that a (closed) subspace $\mathcal{M}$ of a Hilbert space $\mathcal{H}$ is {\em invariant} for $A \in \mathcal{B}(\mathcal{H})$ if $A \mathcal{M} \subset \mathcal{M}$; {\em reducing} if $A \mathcal{M} \subset \mathcal{M}$ and $A^{*} \mathcal{M} \subset \mathcal{M}$; and {\em hyperinvariant} if $T \mathcal{M} \subset \mathcal{M}$ for every $T \in \mathcal{B}(\mathcal{H})$ that commutes with $A$. We begin with the following lemma which follows from \eqref{CCCCCCcccc} and the fact that $C^2 = I$. 

\begin{Lemma}\label{nnNDF99}
If $C$ is a conjugation on $\h$ and $\mathcal{M}$ is a $C$-invariant subspace of $\h$, then $C \mathcal{M} = \mathcal{M}$ and $C\mathcal{M}^{\perp}=\mathcal{M}^\perp$.
\end{Lemma}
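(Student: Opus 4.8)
I want to prove that if $C$ is a conjugation on $\h$ and $\mathcal{M}$ is $C$-invariant (i.e. $C\mathcal{M}\subseteq\mathcal{M}$), then in fact $C\mathcal{M}=\mathcal{M}$ and, moreover, $C\mathcal{M}^{\perp}=\mathcal{M}^{\perp}$. The key structural features available are that $C$ is isometric (hence injective with closed range on any closed subspace), that $C^2=I$, and the ``swap'' identity $\langle C\vec{x}, C\vec{y}\rangle = \langle\vec{y},\vec{x}\rangle$ from \eqref{CCCCCCcccc}.

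First I would establish $C\mathcal{M}=\mathcal{M}$. Since $C^2=I$, applying $C$ to the inclusion $C\mathcal{M}\subseteq\mathcal{M}$ gives $\mathcal{M}=C(C\mathcal{M})\subseteq C\mathcal{M}$, so $C\mathcal{M}=\mathcal{M}$. (One should note in passing that $C\mathcal{M}$ is automatically a closed subspace: $C$ restricted to the closed subspace $\mathcal{M}$ is antilinear and isometric, hence its image is complete, hence closed; so there is no subtlety about taking closures.)

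Next, for the orthogonal complement: I want to show $C\mathcal{M}^{\perp}\subseteq\mathcal{M}^{\perp}$, and then the same $C^2=I$ trick upgrades this to equality exactly as above. So take $\vec{y}\in\mathcal{M}^{\perp}$ and an arbitrary $\vec{x}\in\mathcal{M}$; I must show $\langle C\vec{y},\vec{x}\rangle=0$. Because $C\mathcal{M}=\mathcal{M}$, the vector $\vec{x}$ can be written as $\vec{x}=C\vec{x}'$ for some $\vec{x}'\in\mathcal{M}$ (explicitly $\vec{x}'=C\vec{x}$). Then by \eqref{CCCCCCcccc},
\[
\langle C\vec{y}, \vec{x}\rangle = \langle C\vec{y}, C\vec{x}'\rangle = \langle \vec{x}', \vec{y}\rangle = 0,
\]
the last equality because $\vec{x}'\in\mathcal{M}$ and $\vec{y}\in\mathcal{M}^{\perp}$. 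Hence $C\vec{y}\in\mathcal{M}^{\perp}$, giving $C\mathcal{M}^{\perp}\subseteq\mathcal{M}^{\perp}$, and then $\mathcal{M}^{\perp}=C(C\mathcal{M}^{\perp})\subseteq C\mathcal{M}^{\perp}$ yields $C\mathcal{M}^{\perp}=\mathcal{M}^{\perp}$.

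There is no real obstacle here; the only point requiring a moment's care is the bookkeeping that $C\mathcal{M}$ is closed (so that ``$C$-invariant subspace'' behaves well) and the realization that one should use the antilinear inner-product identity \eqref{CCCCCCcccc} rather than trying to manipulate $C$ as though it were linear — writing $\vec{x}=C\vec{x}'$ and pairing $C\vec{y}$ against $C\vec{x}'$ is the move that makes the involution and the isometry do the work simultaneously.
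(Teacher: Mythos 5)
Your proof is correct and uses exactly the two ingredients the paper cites for this lemma (the identity \eqref{CCCCCCcccc} and $C^2=I$); the paper omits the details, and your write-up fills them in along the same lines. Nothing to change.
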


Below is a useful decomposition theorem for a conjugation. For the rest of  this paper,  $M_{+}(\T)$ will denote the set of all finite positive Borel measures on the unit circle $\T$. For $\mu, \sigma \in M_{+}(\T)$ use use the standard notation $\mu \ll \sigma$ for $\mu$ is absolutely continuous with respect to $\sigma$ and $\mu \perp \sigma$ for $\mu$ is singular with respect to $\sigma$.

\begin{Theorem}\label{conj_dec}Let $U$ be a unitary operator on $\h$, $E(\cdot)$ be its associated spectral measure, and for each $\vec{x} \in \h$, let $\mu_{\vec{x}}$ denote the associated elementary measure from \eqref{67uyh87877YUYUY}.  For any $\mu \in M_{+}(\T)$, let
\begin{equation}\label{kljlkjHBHH661110I}
 \h_\mu :=\{\vec{x} \in \h: \mu_{\vec{x}} \ll \mu \}.
\end{equation}
Then we have the following.
\begin{enumerate}
\item $\h_\mu$ is a reducing subspace for $U$.
  \item{}  If $C \in \mathscr{C}_s(U)$, then
      \begin{enumerate}
                  \item $C\h_\mu=\h_\mu$ and $C\h_\mu^{\perp}=\h_\mu^{\perp}$.
         \item $C=C_\mu\oplus C_{\mu}^{\perp}$ where $C_\mu := C|_{\h_{\mu}}$ and $ C_{\mu}^{\perp}: = C|_{\h_{\mu}^{\perp}}$.
         \item $C_{\mu} (U|_{\mathcal{H}_{\mu}}) C_{\mu} = U^{*}|_{\mathcal{H}_{\mu}}$ and $C_{\mu}^{\perp} (U|_{\mathcal{H}_{\mu}^{\perp}} )C_{\mu}^{\perp} = U^{*}|_{\mathcal{H}_{\mu}^{\perp}}.$
     \end{enumerate}
     \end{enumerate}
\end{Theorem}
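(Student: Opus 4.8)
The plan is to establish part (a) first, using the description of reducing subspaces via spectral measures, and then to leverage Proposition \ref{4} to get part (b) almost for free.

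\textbf{Part (a).} First I would recall that the reducing subspaces of $U$ are exactly the ranges of the spectral projections $E(\cdot)$ in the sense of the functional calculus; more precisely, a subspace is reducing for $U$ if and only if it is invariant under $E(\Omega)$ for every Borel $\Omega \subset \T$, equivalently under the full von Neumann algebra $\{U\}''$. So I want to show $\h_\mu$ is invariant under every $E(\Omega)$. Fix $\vec x \in \h_\mu$ and a Borel set $\Omega$. A standard computation with spectral measures gives, for the vector $\vec y = E(\Omega)\vec x$, the identity $\mu_{\vec y}(\Delta) = \langle E(\Delta) E(\Omega)\vec x, E(\Omega)\vec x\rangle = \langle E(\Delta \cap \Omega)\vec x, \vec x\rangle = \mu_{\vec x}(\Delta \cap \Omega) \le \mu_{\vec x}(\Delta)$, using that $E(\Omega)$ is a projection commuting with $E(\Delta)$ and $E(\Delta)E(\Omega) = E(\Delta \cap \Omega)$. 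Hence $\mu_{E(\Omega)\vec x} \ll \mu_{\vec x} \ll \mu$, so $E(\Omega)\vec x \in \h_\mu$. It is also routine that $\h_\mu$ is a (closed) linear subspace: linearity follows from the parallelogram-type inequality $\mu_{\vec x + \vec y} \le 2(\mu_{\vec x} + \mu_{\vec y})$ (which itself comes from $\langle E(\Delta)(\vec x+\vec y), \vec x + \vec y\rangle \le 2\langle E(\Delta)\vec x,\vec x\rangle + 2\langle E(\Delta)\vec y,\vec y\rangle$, a consequence of $E(\Delta)\ge 0$ and the Cauchy--Schwarz inequality for the positive form $\langle E(\Delta)\cdot,\cdot\rangle$), and closedness follows because if $\vec x_n \to \vec x$ then $\mu_{\vec x_n} \to \mu_{\vec x}$ in total variation, so a dominated-convergence argument shows $\mu_{\vec x} \ll \mu$ whenever each $\mu_{\vec x_n} \ll \mu$. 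Since $\h_\mu$ is a closed subspace invariant under every $E(\Omega)$, and $U = \int \xi\, dE(\xi)$ together with $U^* = \int \bar\xi\, dE(\xi)$ are strong limits of linear combinations of the $E(\Omega)$, we conclude $U\h_\mu \subset \h_\mu$ and $U^*\h_\mu\subset\h_\mu$, i.e., $\h_\mu$ is reducing for $U$.

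\textbf{Part (b).} Now suppose $C \in \mathscr{C}_s(U)$. By Proposition \ref{4}, $C E(\Omega) C = E(\Omega)$ for every Borel $\Omega$. I claim $C\h_\mu \subset \h_\mu$, which by Lemma \ref{nnNDF99} upgrades to $C\h_\mu = \h_\mu$ and $C\h_\mu^\perp = \h_\mu^\perp$, giving (i). To prove the claim, take $\vec x \in \h_\mu$ and compute the elementary measure of $C\vec x$: using \eqref{CCCCCCcccc} and then $C E(\Delta) C = E(\Delta)$ (so $E(\Delta) C = C E(\Delta)$),
\begin{align*}
\mu_{C\vec x}(\Delta) &= \langle E(\Delta) C\vec x, C\vec x\rangle = \langle C E(\Delta) \vec x, C \vec x\rangle = \langle \vec x, E(\Delta)\vec x\rangle = \langle E(\Delta)\vec x, \vec x\rangle = \mu_{\vec x}(\Delta).
\end{align*}
Thus $\mu_{C\vec x} = \mu_{\vec x} \ll \mu$, so $C\vec x \in \h_\mu$, proving the claim. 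Given (i), the direct sum decomposition $\h = \h_\mu \oplus \h_\mu^\perp$ is reducing for $C$, so $C = C_\mu \oplus C_\mu^\perp$ where the restrictions are again conjugations on the respective subspaces (antilinearity, isometry, and involutivity all restrict), giving (ii). Finally, for (iii): since $\h_\mu$ reduces $U$, the operator $U|_{\h_\mu}$ is unitary on $\h_\mu$ with adjoint $U^*|_{\h_\mu}$; and since $\h_\mu$ reduces both $C$ and $U$, the relation $CUC = U^*$ restricts directly to $C_\mu (U|_{\h_\mu}) C_\mu = U^*|_{\h_\mu}$, and likewise on $\h_\mu^\perp$. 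So $C_\mu \in \mathscr{C}_s(U|_{\h_\mu})$ and $C_\mu^\perp \in \mathscr{C}_s(U|_{\h_\mu^\perp})$.

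\textbf{Main obstacle.} The conceptual content is light; the one place demanding care is verifying that $\h_\mu$ is genuinely a \emph{closed} linear subspace, since the map $\vec x \mapsto \mu_{\vec x}$ is quadratic rather than linear and one is imposing a condition ($\ll \mu$) that must be checked to be stable under addition, scaling, and limits. Once that point is handled cleanly via the domination inequality $\mu_{\vec x + \vec y} \le 2(\mu_{\vec x} + \mu_{\vec y})$ and the total-variation continuity of $\vec x \mapsto \mu_{\vec x}$, everything else is a direct application of Proposition \ref{4} and Lemma \ref{nnNDF99}.
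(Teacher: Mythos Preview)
Your proposal is correct and follows essentially the same route as the paper: for (b) you invoke Proposition \ref{4} to get $CE(\Delta)=E(\Delta)C$, compute $\mu_{C\vec x}=\mu_{\vec x}$ via \eqref{CCCCCCcccc}, and then apply Lemma \ref{nnNDF99}, exactly as the paper does. The only difference is that you spell out (a) in detail (invariance under every $E(\Omega)$, linearity via $\mu_{\vec x+\vec y}\le 2(\mu_{\vec x}+\mu_{\vec y})$, closedness via continuity of $\vec x\mapsto\mu_{\vec x}$), whereas the paper simply declares (a) routine and cites Halmos.
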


\begin{proof}
The proof of (a) is routine and we omit it (see \cite[\S 65, \S 66]{MR0045309} for the details).

To prove (b), let $\vec{x}\in\h_\mu$ and $\Omega$ be a Borel subset of $\sigma(U)$ for which $\mu(\Omega) = 0$.
By Proposition \ref{4}, $C$ commutes with $E(\Omega)$ and thus, via \eqref{CCCCCCcccc},
\begin{align*}
\mu_{C \vec{x}} = \langle E(\Omega)C\vec{x},C\vec{x}\rangle  =\langle CE(\Omega)\vec{x},C\vec{x}\rangle
= \langle \vec{x},E(\Omega)\vec{x}\rangle
 =\mu_{\vec{x}}(\Omega)
 = 0.
\end{align*}
This proves that  $C\vec{x}\in\h_\mu$. Since $C$ is a conjugation, we see that $C\h_\mu=\h_\mu$ and  $C\h_\mu^\perp = \h_\mu^\perp$ (Lemma \ref{nnNDF99}). This proves (i) and (ii). Part (iii) follows from the facts that $\h_{\mu}$ is a reducing subspace for $U$ and $C \in \mathscr{C}_{s}(U)$.
\end{proof}

As a note here, other properties of the reducing subspace $\h_{\mu}$ were explored by Halmos in \cite[\S 65]{MR0045309}. Observe that if $\alpha \in \T$ and $\delta_{\alpha}$ denotes the point mass at $\alpha$, one can see that 
$$\h_{\delta_{\alpha}} = \ker (U - \alpha I).$$

\begin{Corollary}\label{oerioijJHJHJHssdf}
For a unitary operator  $U$ on $\h$, let $C \in \mathscr{C}_s(U)$.
  Suppose $\alpha \in \mathbb{T}$ is an eigenvalue for $U$.
  Then
  $C=C_{\delta_\alpha}\oplus C_{\delta_\alpha}^\perp,$ where $C_{\delta_\alpha} \in \mathscr{C}_{s}(U|_{\ker(U - \alpha I)})$ and $ C_{\delta_\alpha}^\perp \in \mathscr{C}_{s}(U|_{\ker(U-\alpha I)^\perp})$.
  \end{Corollary}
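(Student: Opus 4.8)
The plan is to derive this Corollary directly from Theorem \ref{conj_dec} by specializing the measure $\mu$ to the point mass $\delta_\alpha$. The first observation is that, as noted in the remark following the proof of Theorem \ref{conj_dec}, the reducing subspace $\h_{\delta_\alpha}$ coincides with $\ker(U - \alpha I)$. Indeed, $\mu_{\vec x} \ll \delta_\alpha$ forces $\mu_{\vec x}$ to be concentrated at $\{\alpha\}$, so $\langle E(\T \setminus \{\alpha\})\vec x, \vec x\rangle = 0$, which means $E(\{\alpha\})\vec x = \vec x$; conversely any such $\vec x$ has $\mu_{\vec x} = \|\vec x\|^2 \delta_\alpha \ll \delta_\alpha$. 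Since $E(\{\alpha\})$ is the orthogonal projection onto $\ker(U - \alpha I)$ (a standard fact from the spectral theorem for unitaries, e.g.\ \cite[Ch.~IX]{ConwayFA}), this identifies $\h_{\delta_\alpha}$ with the eigenspace.

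Next, I would invoke Theorem \ref{conj_dec}(b) with $\mu = \delta_\alpha$. Part (ii) there gives the orthogonal decomposition $C = C_{\delta_\alpha} \oplus C_{\delta_\alpha}^\perp$, where $C_{\delta_\alpha} = C|_{\h_{\delta_\alpha}} = C|_{\ker(U - \alpha I)}$ and $C_{\delta_\alpha}^\perp = C|_{\ker(U - \alpha I)^\perp}$; part (i) guarantees these are genuinely maps of $\ker(U - \alpha I)$ and its orthogonal complement into themselves, so the restrictions make sense. Each of $C_{\delta_\alpha}$ and $C_{\delta_\alpha}^\perp$ is a restriction of a conjugation to a subspace it preserves, hence is itself antilinear, isometric, and involutive, i.e.\ a conjugation on the respective subspace. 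Finally, part (iii) of Theorem \ref{conj_dec}(b) says precisely that $C_{\delta_\alpha}\,(U|_{\ker(U-\alpha I)})\,C_{\delta_\alpha} = U^*|_{\ker(U-\alpha I)}$ and likewise on the orthogonal complement, which is exactly the statement that $C_{\delta_\alpha} \in \mathscr{C}_s(U|_{\ker(U - \alpha I)})$ and $C_{\delta_\alpha}^\perp \in \mathscr{C}_s(U|_{\ker(U-\alpha I)^\perp})$.

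There is essentially no obstacle here: the Corollary is a clean specialization of the theorem once the identification $\h_{\delta_\alpha} = \ker(U - \alpha I)$ is in hand. The only point that deserves a sentence of care is verifying that the restriction of a conjugation to an invariant (equivalently, by Lemma \ref{nnNDF99}, reducing) subspace is again a conjugation on that subspace — the isometry and antilinearity are inherited trivially, and involutivity follows because $C^2 = I$ on $\h$ restricts to the identity on any $C$-invariant subspace. I would write the whole argument in three or four lines, leaning on Theorem \ref{conj_dec} and the preceding remark rather than reproving anything.

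\begin{proof}
Apply Theorem \ref{conj_dec} with $\mu = \delta_{\alpha}$. As observed just after the proof of that theorem, $\h_{\delta_\alpha} = \ker(U - \alpha I)$. By Theorem \ref{conj_dec}(b)(i)--(ii), $C$ maps $\ker(U - \alpha I)$ and $\ker(U - \alpha I)^\perp$ into themselves and $C = C_{\delta_\alpha} \oplus C_{\delta_\alpha}^\perp$, where $C_{\delta_\alpha} = C|_{\ker(U - \alpha I)}$ and $C_{\delta_\alpha}^\perp = C|_{\ker(U - \alpha I)^\perp}$. Each of these restrictions is antilinear and isometric, being a restriction of $C$, and satisfies $(C_{\delta_\alpha})^2 = I$ (resp.\ $(C_{\delta_\alpha}^\perp)^2 = I$) since $C^2 = I$ on $\h$ restricts to the identity on any $C$-invariant subspace; hence both are conjugations on the respective subspaces. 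Finally, Theorem \ref{conj_dec}(b)(iii) gives $C_{\delta_\alpha}\,(U|_{\ker(U - \alpha I)})\,C_{\delta_\alpha} = U^{*}|_{\ker(U - \alpha I)}$ and $C_{\delta_\alpha}^\perp\,(U|_{\ker(U - \alpha I)^\perp})\,C_{\delta_\alpha}^\perp = U^{*}|_{\ker(U - \alpha I)^\perp}$, which says exactly that $C_{\delta_\alpha} \in \mathscr{C}_s(U|_{\ker(U - \alpha I)})$ and $C_{\delta_\alpha}^\perp \in \mathscr{C}_s(U|_{\ker(U - \alpha I)^\perp})$.
\end{proof}
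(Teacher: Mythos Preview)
Your proof is correct and matches the paper's intended argument: the Corollary is stated without proof in the paper as an immediate consequence of Theorem \ref{conj_dec} with $\mu = \delta_\alpha$, together with the observation $\h_{\delta_\alpha} = \ker(U - \alpha I)$ made in the remark following that theorem. You have simply written out the details the paper leaves implicit.
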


The following corollary will play an important role later on. 

  \begin{Corollary}\label{eigennenenen}
If $U$ is a unitary operator on $\h$ with
  $$\h = \bigoplus_{j \geq 1} \mathcal{E}_{\xi_j}, \quad \mathcal{E}_{\xi_j} := \ker(U - \xi_{j} I),$$ then
  \begin{equation}\label{conjuosidufiuiuCdelta}
  C = \bigoplus_{j \geq 1} C|_{\mathcal{E}_{\xi_j}}
  \end{equation}
  and
  \begin{equation}\label{decomposeeee}
  C|_{\mathscr{E}_{\xi_j}} U|_{\mathscr{E}_{\xi_j}} C|_{\mathscr{E}_{\xi_j}} = U^{*}|_{\mathscr{E}_{\xi_j}} \; \mbox{for all $j \geq 1$}.
  \end{equation}

  \end{Corollary}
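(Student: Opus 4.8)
The plan is to derive Corollary \ref{eigennenenen} as a special case of Theorem \ref{conj_dec}, applied repeatedly (one eigenvalue at a time) together with the observation already recorded after the proof of that theorem, namely that $\h_{\delta_\alpha} = \ker(U-\alpha I)$. Concretely, fix $C \in \mathscr{C}_s(U)$ and suppose $\h = \bigoplus_{j\ge 1}\mathcal{E}_{\xi_j}$ where $\mathcal{E}_{\xi_j} = \ker(U-\xi_j I)$ and the $\xi_j$ are distinct points of $\T$. For each fixed $k$, apply Corollary \ref{oerioijJHJHJHssdf} with $\alpha = \xi_k$: this gives that $\mathcal{E}_{\xi_k}$ is $C$-invariant (indeed $C\mathcal{E}_{\xi_k} = \mathcal{E}_{\xi_k}$) and that $C|_{\mathcal{E}_{\xi_k}} \in \mathscr{C}_s(U|_{\mathcal{E}_{\xi_k}})$, i.e. $C|_{\mathcal{E}_{\xi_k}}$ is a conjugation on $\mathcal{E}_{\xi_k}$ satisfying \eqref{decomposeeee}. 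Since this holds for every $k$, each summand $\mathcal{E}_{\xi_j}$ reduces $C$.

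Next I would assemble \eqref{conjuosidufiuiuCdelta}. Because each $\mathcal{E}_{\xi_j}$ is $C$-invariant and the $\mathcal{E}_{\xi_j}$ are mutually orthogonal with closed span equal to $\h$, the operator $\bigoplus_{j\ge 1} C|_{\mathcal{E}_{\xi_j}}$ is a well-defined antilinear isometric involution on $\h$ agreeing with $C$ on the dense subspace $\bigcup_N \bigoplus_{j\le N}\mathcal{E}_{\xi_j}$; by continuity (an antilinear isometry is automatically continuous) it agrees with $C$ everywhere, which is exactly \eqref{conjuosidufiuiuCdelta}. The identity \eqref{decomposeeee} is then just the restriction statement from Corollary \ref{oerioijJHJHJHssdf} (equivalently Theorem \ref{conj_dec}(b)(iii) with $\mu = \delta_{\xi_j}$, using $\h_{\delta_{\xi_j}} = \mathcal{E}_{\xi_j}$).

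There is really no serious obstacle here: the corollary is a packaging of earlier results, and the only point demanding a line of care is the passage from ``$C$ restricts to each $\mathcal{E}_{\xi_j}$'' to ``$C$ equals the orthogonal direct sum of its restrictions,'' which requires noting that a bounded antilinear map on $\h$ that is $C$ on a dense set is $C$ on all of $\h$. One could alternatively bypass Corollary \ref{oerioijJHJHJHssdf} and argue directly from Proposition \ref{4}: $C$ commutes with every spectral projection $E(\Omega)$, and $E(\{\xi_j\})$ is the orthogonal projection onto $\mathcal{E}_{\xi_j}$, so $C E(\{\xi_j\}) = E(\{\xi_j\}) C$ gives the invariance of each eigenspace directly, and $\sum_j E(\{\xi_j\}) = I$ (strongly) under the hypothesis $\h = \bigoplus_j \mathcal{E}_{\xi_j}$ gives the direct-sum decomposition; the intertwining relations \eqref{decomposeeee} follow since each $\mathcal{E}_{\xi_j}$ reduces $U$. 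I would present the short version built on Corollary \ref{oerioijJHJHJHssdf}, adding the one-sentence density/continuity remark to justify \eqref{conjuosidufiuiuCdelta}.
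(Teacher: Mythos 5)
Your proposal is correct and follows essentially the same route as the paper, which states this corollary without proof as an immediate consequence of Theorem \ref{conj_dec} and Corollary \ref{oerioijJHJHJHssdf} applied with $\mu=\delta_{\xi_j}$ and the identification $\h_{\delta_{\xi_j}}=\ker(U-\xi_j I)$. Your added remark on assembling the direct sum via density and continuity of the antilinear isometry is a reasonable (if routine) point of care that the paper leaves implicit.
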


\begin{Remark}
Apply Theorem \ref{conj_dec} to the case when $\mu = m$ (normalized Lebesgue measure on $\T$) and let $\h_{\operatorname{ac}}: =\h_m$ and $\h_{\operatorname{sing}}: =\h_{m}^\perp$. The decomposition  $\mathcal{H} = \h_{\operatorname{ac}} \oplus \h_{\operatorname{sing}}$ was  first explored in \cite{MR0045309} and considered further in \cite{MR3192030, JMMPWR_OT28, MR0285914}. One can prove the following:  Let $U$ be a unitary operator on $\mathcal{H}$ and let $C \in \mathscr{C}_s(U)$.
  If $\h=\h_{\operatorname{ac}}\oplus \h_{\operatorname{sing}}$ is the Lebesgue decomposition from above,  then  $C=C_{\operatorname{ac}}\oplus C_{\operatorname{sing}}$, where $C_{\operatorname{ac}} \in \mathscr{C}_{s}(U|_{\h_{\operatorname{ac}}})$ and   $C_{\operatorname{sing}} \in \mathscr{C}_{s}(U|_{\h_{\operatorname{sing}}})$.

A von Neumann--Wold type decomposition surveyed in  \cite{JMMPWR_OT28} yields the following decomposition of a unitary operator $U$ on $\mathcal{H}$ by reducing subspaces $\mathcal{H}_{\operatorname{ac}}$, $\mathcal{L}$, and $\bigoplus_{\alpha \in \T} \ker(U - \alpha I)$ such that
\begin{equation}\label{88uuu8hhc8c8c}
\mathcal{H} = \mathcal{H}_{\operatorname{ac}} \bigoplus \mathcal{L} \bigoplus_{\alpha \in \T} \ker(U - \alpha I).
\end{equation}
Here one proves the following: 
Let $U$ be a unitary operator on $\mathcal{H}$ and $C \in \mathscr{C}_s(U)$.
 If $\h$ is decomposed as in \eqref{88uuu8hhc8c8c},   then there is an appropriate decomposition of $C$ as $C=C_{ac}\oplus C|_{\mathcal{L}}\oplus\bigoplus_{\alpha\in \mathbb{T}} C_{\delta_\alpha}$, where each summand is a conjugation on the corresponding space.
\end{Remark}

Though Theorem \ref{conj_dec} seems relatively simple, it yields interesting results about hyperinvariant subspaces of unitary operators.
For a unitary operator, every hyperinvariant subspace is also reducing (but not the converse). Theorem \ref{s9dufioiskldfgf} below connects hyperinvariant subspaces with conjugations. The culmination  of this discussion will be given in Theorems \ref{new3} and \ref{LFDKJgnsl;gdhf}.

\begin{Theorem}\label{iuyrituyeriutyhype}
Let $U$ be a unitary operator on $\h$ with spectral measure $E(\cdot)$. Then every hyperinvariant subspace for $U$ is invariant for every $C \in \mathscr{C}_{s}(U)$.
 Moreover, every hyperinvariant subspaces can be written as  $E(\Omega)\h$, where $\Omega$ is a Borel subset of $ \sigma(U)$.
 \end{Theorem}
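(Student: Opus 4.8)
The plan is to prove the two assertions of Theorem~\ref{iuyrituyeriutyhype} essentially in reverse order: first identify the hyperinvariant subspaces as the spectral subspaces $E(\Omega)\h$, and then use Proposition~\ref{4} together with Lemma~\ref{nnNDF99} to deduce the invariance under every $C \in \mathscr{C}_s(U)$. The key classical fact I would invoke is that for a unitary (or, more generally, normal) operator $U$ with spectral measure $E(\cdot)$, the commutant $\{U\}'$ coincides with the commutant $\{E(\Omega): \Omega \text{ Borel}\}'$ of the spectral projections (this is the standard statement that the von Neumann algebra generated by $U$ is the same as that generated by the range of $E$; see \cite[Ch.~IX]{ConwayFA} or \cite{MR0045309}). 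Granting this, a subspace $\mathcal{M}$ with orthogonal projection $P_{\mathcal{M}}$ is hyperinvariant for $U$ exactly when $P_{\mathcal{M}}$ commutes with everything in $\{U\}' = \{E(\Omega)\}'$, i.e.\ when $P_{\mathcal{M}}$ lies in the bicommutant $\{E(\Omega)\}''$. Since $U$ is normal, its spectral measure is multiplicity-bounded in the appropriate sense and one has the explicit description $\{E(\Omega)\}'' = \{E(\Omega): \Omega \text{ Borel}\}$ modulo the usual caveat — but here I should be careful: this last equality holds precisely when the spectral measure has uniform multiplicity one. In general, $\{E(\Omega)\}''$ is the algebra of all ``spectrally diagonal'' operators. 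However, a projection that commutes with every operator commuting with $E$ must in particular commute with every partial isometry implementing multiplicity, which forces it to be a spectral projection.

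Concretely, here is the route I would take. Write $\h = L^2(\mu, X)$ and $U = M_\phi$ via the spectral theorem, arranged so that $\phi$ is the identity function on $X = \sigma(U) \subseteq \T$ (after a harmless change of variables), so that $E(\Omega) = M_{\chi_\Omega}$. First I would show every $E(\Omega)\h$ is hyperinvariant: if $T$ commutes with $U = M_\xi$, then by a routine argument $T$ commutes with $M_g$ for every bounded Borel $g$ (approximate $g$ by polynomials in $\xi, \bar\xi$ using that $U$ is unitary, or cite the commutant computation directly), hence $T$ commutes with $E(\Omega) = M_{\chi_\Omega}$, so $T(E(\Omega)\h) = E(\Omega)(T\h) \subseteq E(\Omega)\h$. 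Second, for the converse, suppose $\mathcal{M}$ is hyperinvariant for $U$ and let $P = P_{\mathcal{M}}$. Since each $E(\Omega)$ commutes with $U$, hyperinvariance gives $E(\Omega)\mathcal{M} \subseteq \mathcal{M}$ for all $\Omega$; applying the same to $E(\Omega)^* = E(\Omega)$ shows $\mathcal{M}$ is reducing for every $E(\Omega)$, i.e.\ $P$ commutes with every spectral projection. Now I would argue that $\mathcal{M}$ itself must be of the form $E(\Omega_0)\h$: because $P \in \{E(\Omega)\}'$ and — this is the crux — $P$ must also commute with every bounded operator in $\{U\}'$, which contains enough ``off-diagonal'' operators (relative to a direct integral / multiplicity decomposition of $U$) to force $P$ to act as multiplication by a characteristic function, hence $P = M_{\chi_{\Omega_0}} = E(\Omega_0)$ for some Borel $\Omega_0 \subseteq \sigma(U)$. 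Finally, $\Omega_0$ can be taken inside $\sigma(U)$ since $E$ is supported there.

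For the invariance-under-conjugations claim, the argument is short given the above: let $\mathcal{M} = E(\Omega)\h$ be hyperinvariant and let $C \in \mathscr{C}_s(U)$. By Proposition~\ref{4}, $C E(\Omega) C = E(\Omega)$, equivalently $C E(\Omega) = E(\Omega) C$. Hence for $\vec{x} \in \mathcal{M}$ we have $C\vec{x} = C E(\Omega)\vec{x} = E(\Omega) C \vec{x} \in E(\Omega)\h = \mathcal{M}$, so $C\mathcal{M} \subseteq \mathcal{M}$. (One could alternatively phrase this as: $\mathcal{M} = \operatorname{ran} E(\Omega)$ and $\mathcal{M}^\perp = \operatorname{ran} E(\sigma(U)\setminus\Omega)$ are both $C$-invariant, consistent with Lemma~\ref{nnNDF99}.) This completes the proof.

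The main obstacle I anticipate is the second (harder) direction of the ``moreover'' clause: rigorously pinning down that an arbitrary hyperinvariant subspace is exactly a spectral subspace rather than merely a subspace reducing all the $E(\Omega)$. When $U$ has uniform multiplicity one this is immediate (commuting with all $M_g$, $g \in L^\infty(\mu)$, forces $P = M_{\chi_{\Omega_0}}$ by a standard $L^\infty$-commutant argument), but the general multiplicity case requires either a direct-integral decomposition of $U$ and a careful look at what operators lie in $\{U\}'$, or a reduction to the cyclic case via the structure theorem for normal operators. I would handle this by decomposing $\h$ into a direct sum of cyclic subspaces $\h = \bigoplus_k \h_k$ on each of which $U$ acts as multiplication by $\xi$ on some $L^2(\mu_k)$ with $\mu_1 \gg \mu_2 \gg \cdots$, noting that $\{U\}'$ contains all the ``intertwiners'' $L^2(\mu_j) \to L^2(\mu_i)$ given by multiplication operators, and using their presence to force any hyperinvariant $P$ to be a single common spectral projection $M_{\chi_{\Omega_0}}$ on every summand. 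Alternatively, since this structure is surely needed elsewhere in the paper, one may simply cite \cite[\S 65, \S 66]{MR0045309} or \cite[Ch.~IX]{ConwayFA} for the identification of the hyperinvariant subspaces of a normal operator with its spectral subspaces and keep the proof here focused on the conjugation statement.
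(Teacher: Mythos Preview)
Your proposal is correct and follows essentially the same approach as the paper: the paper simply cites \cite[Proposition 6.9]{MR2003221} (and \cite{DP}) for the identification of the hyperinvariant subspaces of $U$ with the spectral subspaces $E(\Omega)\h$, and then invokes Proposition~\ref{4} (that $C$ commutes with $E(\Omega)$) to conclude $C$-invariance, exactly as you do in your final two paragraphs. Your detailed sketch of the multiplicity argument for the ``moreover'' clause is not needed here, and your suggested alternative of citing the standard references is precisely what the paper does.
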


\begin{proof}
From \cite[Proposition 6.9]{MR2003221} (see also \cite{DP}), every hyperinvariant subspace for $U$ can be written as $E(\Omega)\h$ for some Borel set $\Omega\subset\sigma(U)$.  Since $C$ commutes with $E(\cdot)$ (Proposition \ref{4}(b)), $E(\Omega)\h$ is invariant for $C$.
\end{proof}


We now connect $\h_{\mu}$ with hyperinvariant subspaces. A first step in this direction is this following result -- which might be contained somewhere in the literature but we include a proof for the reader's convenience. We will see an extension of this result in Theorems \ref{new3} and \ref{LFDKJgnsl;gdhf} below.


\begin{Theorem}\label{s9dufioiskldfgf}
If $\mathcal{M}$ is a hyperinvariant subspace of a unitary operator $U$ on $\h$, then there is a $\mu \in M_{+}(\T)$ for which $\mathcal{M} = \h_{\mu}$.
\end{Theorem}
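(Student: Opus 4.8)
The plan is to begin with Theorem~\ref{iuyrituyeriutyhype}, which tells us that the hyperinvariant subspace $\mathcal{M}$ has the form $\mathcal{M} = E(\Omega)\h$ for some Borel set $\Omega \subset \sigma(U)$. The problem then reduces to producing a measure $\mu \in M_{+}(\T)$ with $\h_\mu = E(\Omega)\h$, and the natural candidate is one whose null sets are exactly the Borel sets $A$ with $E(A \cap \Omega) = 0$. Concretely, I would fix an orthonormal basis $\{\vec{e}_n\}_{n \geq 1}$ of the subspace $E(\Omega)\h$ (finite or countable, since $\h$ is separable; if $E(\Omega)\h = \{0\}$ the statement is trivial and one takes $\mu$ to be the zero measure) and set $\mu := \sum_{n \geq 1} 2^{-n}\mu_{\vec{e}_n}$, where $\mu_{\vec{e}_n}$ is the elementary measure from \eqref{67uyh87877YUYUY}. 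Since $\mu_{\vec{e}_n}(\T) = \|\vec{e}_n\|^2 = 1$, the series converges to a finite positive Borel measure, so $\mu \in M_{+}(\T)$; and since $E(\T \setminus \Omega)\vec{e}_n = E(\T\setminus\Omega)E(\Omega)\vec{e}_n = 0$ for every $n$, the measure $\mu$ is concentrated on $\Omega$, that is, $\mu(\T \setminus \Omega) = 0$.

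The key lemma to record is that, for a Borel set $A \subset \T$,
\begin{equation*}
\mu(A) = 0 \iff E(A)\vec{e}_n = 0 \text{ for all } n \iff E(A \cap \Omega) = 0,
\end{equation*}
which follows from $\mu_{\vec{e}_n}(A) = \langle E(A)\vec{e}_n, \vec{e}_n\rangle = \|E(A)\vec{e}_n\|^2$, the fact that $\{\vec{e}_n\}_{n \geq 1}$ has dense span in $E(\Omega)\h$, and the identity $E(A)E(\Omega) = E(A \cap \Omega)$. With this in hand, both inclusions are short. For $E(\Omega)\h \subset \h_\mu$: if $\vec{x} = E(\Omega)\vec{x}$ and $\mu(A) = 0$, then $E(A\cap\Omega) = 0$, so $\mu_{\vec{x}}(A) = \langle E(A)E(\Omega)\vec{x}, E(\Omega)\vec{x}\rangle = \langle E(A\cap\Omega)\vec{x}, \vec{x}\rangle = 0$, whence $\mu_{\vec{x}} \ll \mu$ and $\vec{x} \in \h_\mu$. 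For $\h_\mu \subset E(\Omega)\h$: if $\vec{x} \in \h_\mu$, then since $\mu(\T\setminus\Omega) = 0$ we get $\mu_{\vec{x}}(\T\setminus\Omega) = \|E(\T\setminus\Omega)\vec{x}\|^2 = 0$, so $E(\T\setminus\Omega)\vec{x} = 0$ and $\vec{x} = E(\Omega)\vec{x} \in E(\Omega)\h$. Combining the two gives $\h_\mu = E(\Omega)\h = \mathcal{M}$, as desired.

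I do not anticipate a genuine obstacle here. Once Theorem~\ref{iuyrituyeriutyhype} puts $\mathcal{M}$ in the form $E(\Omega)\h$, the substance of the argument is the simple observation that $\mu := \sum_{n\geq 1} 2^{-n}\mu_{\vec{e}_n}$ reproduces precisely the null-set structure of $E$ on $\Omega$; this requires nothing beyond the existence of an orthonormal basis for $E(\Omega)\h$, and in particular sidesteps any appeal to spectral multiplicity theory that would be needed if one insisted on taking $\mu$ to be a single elementary measure $\mu_{\vec{x}_0}$. The remaining verifications are routine bookkeeping with the spectral measure.
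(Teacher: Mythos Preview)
Your proof is correct and follows the same overall architecture as the paper's: reduce to $\mathcal{M} = E(\Omega)\h$ via Theorem~\ref{iuyrituyeriutyhype}, build a measure $\mu$ whose null sets are exactly the Borel sets $A$ with $E(A\cap\Omega)=0$, and then verify the two inclusions $E(\Omega)\h \subset \h_\mu$ and $\h_\mu \subset E(\Omega)\h$ just as you do.

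The only genuine difference is in how $\mu$ is produced. The paper restricts $U$ to $E(\Omega)\h$, invokes the existence of a \emph{separating vector} $\vec{e}$ for the von Neumann algebra $\mathcal{W}^{*}(U|_{E(\Omega)\h})$ (citing Conway), and takes $\mu = \mu_{\vec{e}}$; this yields the slightly stronger fact that $\mu$ can be chosen to be a single elementary measure. Your construction $\mu = \sum_{n\geq 1} 2^{-n}\mu_{\vec{e}_n}$ over an orthonormal basis of $E(\Omega)\h$ is more elementary---it needs nothing beyond separability and basic spectral-projection identities---at the cost of not giving $\mu$ as an elementary measure. Since the theorem only asks for \emph{some} $\mu \in M_{+}(\T)$, your route is entirely adequate, and your closing remark about sidestepping the separating-vector machinery is exactly the trade-off at play.
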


\begin{proof}
From Theorem \ref{iuyrituyeriutyhype}, every hyperinvarient subspace $\mathcal{M}$ of $U$, with associated spectral measure $E(\cdot)$, takes the form $\mathcal{M} = E(\Omega) \h$ for some Borel subset $\Omega \subset \sigma(U)$. Since $E(\Omega)\h$ is also reducing, then $V := U|_{E(\Omega) \h}$ is a unitary operator whose spectral measure is $E(\cdot)E(\Omega)$.

Let $\mathcal{W}^{*}(V)$ denote the von Neumann algebra generated by $V$ (i.e., the smallest $\ast$-closed and strongly closed algebra containing $V$). Using \cite[Ch.~IX, Cor.~7.9]{ConwayFA} one can produce a {\em separating vector} $\vec{e} \in \h$ for $\mathcal{W}^{*}(V)$ in that if $A \in \mathcal{W}^{*}(V)$ satisfies $A \vec{e} = \vec{0}$, then $A \equiv 0$. By \cite[Ch.~IX, Prop. 8.3]{ConwayFA}, the corresponding elementary measure $\mu_{\vec{e}}$ from \eqref{67uyh87877YUYUY} is a  {\em scalar-valued spectral measure} for $V$ in that $\mu_{\vec{e}}(\Delta) = 0$ if and only if $E(\Delta) E(\Omega)  = E(\Delta \cap \Omega)= 0$.

To complete the proof, we will show that $E(\Omega) \h = \h_{\mu_{\vec{e}}}$. For the $\subset$ containment, observe that if $\vec{x} \in E(\Omega) \h$ and $\Delta$ is a Borel subset of $\Omega$, then
\begin{equation}\label{dkjfblfdkjgkf444}
\mu_{\vec{x}}(\Delta) = \langle E(\Delta) \vec{x}, \vec{x}\rangle = \|E(\Delta) \vec{x}\|^2
\end{equation}
 and thus, since $\mu_{\vec{e}}$ is a scalar valued spectral measure for $V = U|_{E(\Omega) \h}$, we see that if $\mu_{\vec{e}}(\Delta) = 0$, then $E(\Delta) = 0$. Thus, by \eqref{dkjfblfdkjgkf444}, $\mu_{\vec{x}} \ll \mu_{\vec{e}}$ and so $\vec{x} \in \h_{\mu_{\vec{e}}}$. This verifies $E(\Omega) \h \subset \h_{\mu_{\vec{e}}}$.

For the $\supseteq$ containment, let $\vec{y} \in \h_{\mu_{\vec{e}}}$. Then $\mu_{\vec{y}} \ll \mu_{\vec{e}}$ and so $\mu_{\vec{y}}(\Delta) = 0$ whenever $\mu_{\vec{e}}(\Delta) = 0$. We want to show that $\vec{y} \in E(\Omega) \h$. Since $\mu_{\vec{e}}$ is a scalar valued spectral measure for $V$, we see that $\mu_{\vec{e}}(\sigma(U) \setminus \Omega) = 0$. Using the assumption that $\mu_{\vec{y}} \ll \mu_{\vec{e}}$, we obtain
$$\mu_{\vec{y}}(\sigma(U) \setminus \Omega) = \|E(\sigma(U) \setminus \Omega) \vec{y}\|^2 = 0.$$
Thus, since $\h = (E(\Omega)\h) \oplus (E(\sigma(U) \setminus \Omega)\h)$, it follows that $\vec{y} \in E(\Omega) \h$. This verifies $\h_{\mu_{\vec{e}}} \subset E(\Omega) \h$.
\end{proof}

We end this section with a discussion of when $\h_{\nu_1} \subset \h_{\nu_2}$ for $\nu_1, \nu_2 \in M_{+}(\T)$. Observe that if $\nu$ is a scalar spectral measure for $U$ (i.e., $\nu(\Delta) = 0$ if and only if $E(\Delta) = 0$) then $\h_\nu=\h$. Next let
$$[\vec{x}]_{U, U^{*}} := \bigvee\{ U^{n} \vec{x}: n \in \Z\},$$
where $\bigvee$ denotes the closed linear span,
denote the $\ast$-cyclic invariant subspace generated by $\vec{x}$. 
For any  $\mu \in M_{+}(\T)$ the condition $\vec{x}\in \h_\mu$ implies that  $[\vec{x}]_{U, U^{*}}\subset \h_\mu$ (since $\h_{\mu}$ is reducing).

Recall \cite[\S 48]{MR0045309} the  standard Boolean operations $\wedge$ and $\vee$ for $\mu_1, \mu_2 \in M_{+}(\T)$ defined on Borel subsets $\Omega$ of $\T$ by 
$$(\mu_1 \vee \mu_2) (\Omega) = \mu_1(\Omega) + \mu_2(\Omega);$$
$$(\mu_1 \wedge \mu_2)(\Omega) = \inf\{\mu_1(\Omega \cap A) + \mu_2(\Omega \setminus A): \mbox{$A$ is a Borel set}\}.$$

\begin{Proposition}Let $U$ be a unitary operator on $\h$ and $\mu$ be any scalar spectral measure for $U$. For  $\nu_1, \nu_2 \in M_{+}(\T)$ the following are equivalent. 
\begin{enumerate}
\item $\h_{\nu_1}\subset \h_{\nu_2};$
\item $\nu_1\wedge \mu \ll \nu_2\wedge \mu.$
\end{enumerate}
\end{Proposition}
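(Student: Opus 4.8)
The plan is to reduce the statement to a pointwise (measure-theoretic) comparison by exploiting the fact that $\mathcal{H}_\nu$ only ``sees'' the part of $\nu$ that is dominated by the scalar spectral measure $\mu$. First I would record the key observation that $\mathcal{H}_\nu = \mathcal{H}_{\nu\wedge\mu}$ for every $\nu\in M_+(\T)$. Indeed, since $\mu$ is a scalar spectral measure for $U$, every elementary measure $\mu_{\vec{x}}$ satisfies $\mu_{\vec{x}}\ll\mu$: if $\mu(\Delta)=0$ then $E(\Delta)=0$, hence $\mu_{\vec{x}}(\Delta)=\|E(\Delta)\vec{x}\|^2=0$. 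Therefore, for any $\vec{x}\in\mathcal{H}$, the condition $\mu_{\vec{x}}\ll\nu$ is equivalent to $\mu_{\vec{x}}\ll\nu$ \emph{and} $\mu_{\vec{x}}\ll\mu$, which (by a standard fact about the lattice operation $\wedge$, namely that a measure $\rho$ absolutely continuous with respect to both $\nu$ and $\mu$ is absolutely continuous with respect to $\nu\wedge\mu$, and conversely $\nu\wedge\mu\ll\nu$, $\nu\wedge\mu\ll\mu$) is equivalent to $\mu_{\vec{x}}\ll\nu\wedge\mu$. Hence $\mathcal{H}_{\nu}=\mathcal{H}_{\nu\wedge\mu}$. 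After this reduction, writing $\sigma_i:=\nu_i\wedge\mu$ (so $\sigma_i\ll\mu$), it suffices to prove: for $\sigma_1,\sigma_2\in M_+(\T)$ with $\sigma_1,\sigma_2\ll\mu$, one has $\mathcal{H}_{\sigma_1}\subset\mathcal{H}_{\sigma_2}$ if and only if $\sigma_1\ll\sigma_2$.

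For the direction (b)$\Rightarrow$(a): if $\nu_1\wedge\mu\ll\nu_2\wedge\mu$, then for any $\vec{x}\in\mathcal{H}_{\nu_1}=\mathcal{H}_{\nu_1\wedge\mu}$ we get $\mu_{\vec{x}}\ll\nu_1\wedge\mu\ll\nu_2\wedge\mu$, so $\vec{x}\in\mathcal{H}_{\nu_2\wedge\mu}=\mathcal{H}_{\nu_2}$. This direction is essentially immediate once the reduction is in place.

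For the direction (a)$\Rightarrow$(b), I would argue by contraposition, producing a vector that separates the two subspaces. Suppose $\sigma_1\not\ll\sigma_2$. Then there is a Borel set $\Delta_0\subset\T$ with $\sigma_2(\Delta_0)=0$ but $\sigma_1(\Delta_0)>0$. The natural candidate is to find $\vec{x}$ with $\mu_{\vec{x}}\ll\sigma_1$ (so $\vec{x}\in\mathcal{H}_{\sigma_1}$) but $\mu_{\vec{x}}(\Delta_0)>0$, which forces $\mu_{\vec{x}}\not\ll\sigma_2$ and hence $\vec{x}\notin\mathcal{H}_{\sigma_2}$. To construct such an $\vec{x}$: pick a separating vector $\vec{e}$ for the von Neumann algebra $\mathcal{W}^{*}(U|_{E(\Delta_0)\mathcal{H}})$ — using \cite[Ch.~IX, Cor.~7.9]{ConwayFA} as in the proof of Theorem~\ref{s9dufioiskldfgf} — so that $\mu_{\vec{e}}$ is a scalar spectral measure for $U|_{E(\Delta_0)\mathcal{H}}$; in particular $\mu_{\vec{e}}$ is supported in $\Delta_0$ and $\mu_{\vec{e}}(\Delta_0)>0$ (the latter because $\sigma_1(\Delta_0)>0$ forces $E(\Delta_0)\neq 0$, using $\sigma_1\ll\mu$ and $\mu$ scalar). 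Now $\mu_{\vec{e}}$ need not be $\ll\sigma_1$, so I would correct it: since $\sigma_1\ll\mu$ and $\mu_{\vec{e}}$ is a scalar spectral measure for $E(\Delta_0)\mathcal{H}$, the restriction $\sigma_1|_{\Delta_0}$ is absolutely continuous with respect to $\mu_{\vec{e}}$, hence of the form $h\,d\mu_{\vec{e}}$ with $h\in L^1(\mu_{\vec{e}})$, $h\geq 0$, and $\int_{\Delta_0}h\,d\mu_{\vec{e}}=\sigma_1(\Delta_0)>0$. Setting $\vec{x}:=\sqrt{h}\,(E(\Delta_0)\vec{e})$ — interpreting this via the functional calculus / the isometry $L^2(\mu_{\vec{e}})\to E(\Delta_0)\mathcal{H}$, $f\mapsto f(U)\vec{e}$ — gives $\mu_{\vec{x}} = h\,\mu_{\vec{e}} = \sigma_1|_{\Delta_0}\ll\sigma_1$, so $\vec{x}\in\mathcal{H}_{\sigma_1}$, while $\mu_{\vec{x}}(\Delta_0)=\sigma_1(\Delta_0)>0=\sigma_2(\Delta_0)$, so $\mu_{\vec{x}}\not\ll\sigma_2$ and $\vec{x}\notin\mathcal{H}_{\sigma_2}$. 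Translating back, $\vec{x}\in\mathcal{H}_{\nu_1}\setminus\mathcal{H}_{\nu_2}$, contradicting (a).

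The main obstacle I anticipate is the construction in the contrapositive direction: one needs to manufacture a vector whose elementary measure is \emph{exactly} a prescribed measure absolutely continuous with respect to a scalar spectral measure on a reducing piece. The clean way to do this is through the spectral representation $U|_{E(\Delta_0)\mathcal{H}}$ on (a direct sum/integral of copies of) $L^2(\mu_{\vec{e}})$ with $\vec{e}$ corresponding to the constant function $1$; then $f(U)\vec{e}$ corresponds to $f$ and has elementary measure $|f|^2\,d\mu_{\vec{e}}$, so choosing $f=\sqrt{h}$ does the job. Getting the bookkeeping right — that a separating vector gives a scalar spectral measure, that absolute continuity of the target measure with respect to $\mu_{\vec{e}}$ holds, and that the functional calculus produces a vector in $\mathcal{H}$ with the claimed elementary measure — is the technical heart; everything else is the soft lattice-theoretic reduction via $\mathcal{H}_\nu=\mathcal{H}_{\nu\wedge\mu}$.
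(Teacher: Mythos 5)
Your proof is correct and takes essentially the same route as the paper's: the direction (b)$\Rightarrow$(a) is the same chain of absolute continuities, and for (a)$\Rightarrow$(b) you argue by contraposition and realize a measure that is absolutely continuous with respect to $\nu_1\wedge\mu$ but not with respect to $\nu_2\wedge\mu$ as an elementary measure $\mu_{\vec{x}}$ by means of a separating vector --- exactly the paper's argument, except that where the paper invokes Conway's Lemma IX.8.6 to produce the vector, you reprove that lemma inline via the $L^2(\mu_{\vec{e}})$ functional-calculus model on the reducing piece $E(\Delta_0)\h$. The explicit reduction $\h_{\nu}=\h_{\nu\wedge\mu}$ is a clean way to organize the bookkeeping but does not change the method.
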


\begin{proof} Assume condition (b) and let $\vec{x}\in \h_{\nu_1}$. Then $\langle E(\cdot)\vec{x},\vec{x}\rangle\ll  \nu_1$. Since $\mu$ is a scalar spectral measure for $U$, we see that $\langle E(\cdot)\vec{x},\vec{x}\rangle\ll  \mu$. Hence $\langle E(\cdot)\vec{x},\vec{x}\rangle\ll  \nu_1\wedge \mu \ll \nu_2\wedge \mu$. Thus $\langle E(\cdot)\vec{x},\vec{x}\rangle\ll \nu_2$
 and so $\vec{x}\in \h_{\nu_2}$. Thus, $(b) \Longrightarrow (a)$.
 
For the proof of $(a) \Longrightarrow (b)$,  let us assume that $\nu_1\wedge \mu$ is not absolutely continuous with  respect to $\nu_2\wedge \mu$. Then there is a nonzero measure $\nu \in M_{+}(\T)$ such that $\nu\ll\nu_1\wedge \mu$ and $\nu\perp \nu_2\wedge \mu$. Let $\vec{e}\in\h$ be a vector  such that $\mu_{\vec{e}}$ is a scalar spectral measure (for example a separating vector for von Neumann algebra generated by $U$ -- from the proof of Theorem \ref{s9dufioiskldfgf}). Since $\mu$ and  $\mu_{\vec{e}}$ are mutually absolutely continuous as scalar  spectral measures for $U$,  it follows that $\nu\ll\mu_{\vec{e}}$. By \cite[Ch.IX, Lemma 8.6]{ConwayFA} there is a nonzero  $\vec{y}\in [\vec{x}]_{U, U^{*}}$
such that $\mu_{\vec{y}}=\nu$. Therefore, $\vec{y}\in \h_{\nu_1}$. On the other hand if  $\vec{y}\in \h_{\nu_2}$ then $\mu_{\vec{y}}=\nu\ll\nu_2\wedge \mu$, which is a contradiction.
  \end{proof}
  
  \begin{Corollary}
  Let $\nu_1, \nu_2 \in M_{+}(\T)$. If $\nu_1 \ll \nu_2$ then $\h_{\nu_1} \subset \h_{\nu_2}$.
  \end{Corollary}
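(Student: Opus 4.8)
The plan is to prove this Corollary directly from the Proposition that immediately precedes it, so the only real work is to verify that the hypothesis $\nu_1 \ll \nu_2$ forces condition (b) of that Proposition, namely $\nu_1 \wedge \mu \ll \nu_2 \wedge \mu$ for a (hence any) scalar spectral measure $\mu$ of $U$. First I would fix a scalar spectral measure $\mu$ for $U$; such a measure exists by the separating-vector construction used in the proof of Theorem~\ref{s9dufioiskldfgf}. Then I would observe that, for the Boolean wedge operation $\wedge$ defined just above the Proposition, $\nu_i \wedge \mu$ is a measure that is simultaneously absolutely continuous with respect to $\nu_i$ and with respect to $\mu$; more precisely, $\nu_i \wedge \mu$ is (up to mutual absolute continuity) the ``largest'' measure dominated by both $\nu_i$ and $\mu$. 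This is the standard content of the Lebesgue--Boolean calculus in \cite[\S 48]{MR0045309}.

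The key monotonicity step is that $\wedge$ is monotone in its first argument with respect to absolute continuity: if $\nu_1 \ll \nu_2$, then any measure that is absolutely continuous with respect to $\nu_1$ is also absolutely continuous with respect to $\nu_2$, and in particular $\nu_1 \wedge \mu \ll \nu_1 \ll \nu_2$, while also $\nu_1 \wedge \mu \ll \mu$. Hence $\nu_1 \wedge \mu$ is absolutely continuous with respect to both $\nu_2$ and $\mu$, and therefore with respect to $\nu_2 \wedge \mu$ (using that $\nu_2 \wedge \mu$ absorbs every measure dominated by both $\nu_2$ and $\mu$). This establishes condition (b), and then the Proposition gives $\h_{\nu_1} \subset \h_{\nu_2}$, completing the proof.

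Alternatively — and perhaps cleaner for the write-up — I would bypass the Boolean calculus entirely and argue straight from the definition \eqref{kljlkjHBHH661110I} of $\h_\mu$. If $\vec{x} \in \h_{\nu_1}$, then $\mu_{\vec{x}} \ll \nu_1$ by definition; since $\nu_1 \ll \nu_2$ by hypothesis, transitivity of absolute continuity gives $\mu_{\vec{x}} \ll \nu_2$, i.e.\ $\vec{x} \in \h_{\nu_2}$. This is a two-line argument and requires none of the scalar-spectral-measure machinery. I would likely present this direct proof as the main argument and perhaps remark that it also follows from the preceding Proposition.

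Since the statement is genuinely elementary, there is no serious obstacle. The only point requiring a sentence of care, if one insists on deducing it from the Proposition rather than from the definition, is the verification that $\wedge$ respects $\ll$ in the sense described above; the truly frictionless route is the direct transitivity-of-absolute-continuity argument, which makes the Corollary essentially immediate.
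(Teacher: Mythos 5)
Your proposal is correct. The paper states this as a Corollary of the immediately preceding Proposition and prints no proof, so the intended argument is exactly your first route: check that $\nu_1 \ll \nu_2$ forces $\nu_1\wedge\mu \ll \nu_2\wedge\mu$, which you do correctly (the one fact you use, that a measure absolutely continuous with respect to both $\nu_2$ and $\mu$ is absolutely continuous with respect to $\nu_2\wedge\mu$, is the standard property of $\wedge$ from \cite[\S 48]{MR0045309}: a set is $\nu_2\wedge\mu$-null precisely when it splits into a $\nu_2$-null piece and a $\mu$-null piece). Your second, direct argument --- $\vec{x}\in\h_{\nu_1}$ gives $\mu_{\vec{x}}\ll\nu_1\ll\nu_2$, hence $\vec{x}\in\h_{\nu_2}$ by the definition of $\h_{\nu}$ in \eqref{kljlkjHBHH661110I} --- is also valid and is the cleaner route, since it needs neither the scalar spectral measure nor the Boolean calculus; it also makes transparent why no converse holds without intersecting with $\mu$ as in the Proposition. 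Either write-up is acceptable.
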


\begin{Corollary}Let $U$ be a unitary operator  on $\h$ and $\mu$ be any scalar spectral measure for $U$. For $\nu_1, \nu_2 \in M_{+}(\T)$ the following are equivalent. 
\begin{enumerate}
\item $\h_{\nu_1}= \h_{\nu_2}$;
\item $ \nu_1\wedge \mu$ and $\nu_2\wedge \mu$ are mutually absolutely continuous.
\end{enumerate}
\end{Corollary}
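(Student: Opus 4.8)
The plan is to deduce the final Corollary directly from the Proposition it follows, using the previously stated Corollary on absolute continuity. The statement to prove is: for a scalar spectral measure $\mu$ for $U$, one has $\h_{\nu_1} = \h_{\nu_2}$ if and only if $\nu_1 \wedge \mu$ and $\nu_2 \wedge \mu$ are mutually absolutely continuous. The natural strategy is to observe that $\h_{\nu_1} = \h_{\nu_2}$ is equivalent to the conjunction $\h_{\nu_1} \subset \h_{\nu_2}$ \emph{and} $\h_{\nu_2} \subset \h_{\nu_1}$, and then apply the Proposition to each inclusion separately.

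Concretely, I would proceed as follows. First, note that $\h_{\nu_1} = \h_{\nu_2}$ holds if and only if both $\h_{\nu_1} \subset \h_{\nu_2}$ and $\h_{\nu_2} \subset \h_{\nu_1}$ hold. Second, apply the Proposition (with the same fixed scalar spectral measure $\mu$) to the first inclusion: $\h_{\nu_1} \subset \h_{\nu_2}$ if and only if $\nu_1 \wedge \mu \ll \nu_2 \wedge \mu$. Third, apply the Proposition again, with the roles of $\nu_1$ and $\nu_2$ interchanged, to the second inclusion: $\h_{\nu_2} \subset \h_{\nu_1}$ if and only if $\nu_2 \wedge \mu \ll \nu_1 \wedge \mu$. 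Fourth, combine: $\h_{\nu_1} = \h_{\nu_2}$ if and only if $\nu_1 \wedge \mu \ll \nu_2 \wedge \mu$ and $\nu_2 \wedge \mu \ll \nu_1 \wedge \mu$, which is precisely the assertion that $\nu_1 \wedge \mu$ and $\nu_2 \wedge \mu$ are mutually absolutely continuous.

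There is essentially no obstacle here — the only thing to be slightly careful about is that the Proposition is symmetric enough to be applied with $\nu_1$ and $\nu_2$ swapped, which it plainly is since it is stated for arbitrary $\nu_1, \nu_2 \in M_{+}(\T)$ and the same scalar spectral measure $\mu$ works for $U$ regardless of the labelling. If one wanted an even more streamlined write-up, one could instead invoke the Corollary immediately preceding (that $\nu_1 \ll \nu_2$ implies $\h_{\nu_1} \subset \h_{\nu_2}$) for one direction, but going through the Proposition twice is cleaner and makes the equivalence (rather than just an implication) transparent. So the proof is a two-line deduction; the main content was already established in the Proposition, and this Corollary is merely recording the ``equality'' version of that ``inclusion'' result.

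Here is the proof I would write. By definition, $\h_{\nu_1} = \h_{\nu_2}$ if and only if $\h_{\nu_1} \subset \h_{\nu_2}$ and $\h_{\nu_2} \subset \h_{\nu_1}$. By the previous proposition applied to the pair $(\nu_1, \nu_2)$, the first inclusion is equivalent to $\nu_1 \wedge \mu \ll \nu_2 \wedge \mu$; applied to the pair $(\nu_2, \nu_1)$, the second inclusion is equivalent to $\nu_2 \wedge \mu \ll \nu_1 \wedge \mu$. Hence $\h_{\nu_1} = \h_{\nu_2}$ if and only if $\nu_1 \wedge \mu \ll \nu_2 \wedge \mu$ and $\nu_2 \wedge \mu \ll \nu_1 \wedge \mu$, i.e., $\nu_1 \wedge \mu$ and $\nu_2 \wedge \mu$ are mutually absolutely continuous.
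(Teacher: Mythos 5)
Your proof is correct and is exactly the intended argument: the paper states this as an immediate corollary of the preceding proposition (with no written proof), obtained by applying that proposition to both inclusions $\h_{\nu_1}\subset\h_{\nu_2}$ and $\h_{\nu_2}\subset\h_{\nu_1}$. Nothing further is needed.
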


\section{A matrix description of $\mathscr{C}_s(U)$}

Our first description of $\mathscr{C}_s(U)$ will involve matrix a representation of $U$ with respect to an orthonormal basis. Though this might seem a bit difficult to apply at first, we will see, through our decomposition theorem from the previous section, that it can often yield a tangible description of $\mathscr{C}_s(U)$. Recall our notation from \S \ref{NxcvxcvxcvxXX}, where, for a fixed orthonormal basis $\mathscr{B} = \{\vec{u}_{j}\}_{j \geq 1}$ for a (separable) Hilbert space $\h$ and for $A \in \mathcal{B}(\h)$, $[A]_{\mathscr{B}}$ denotes the matrix representation of $A$ with respect to $\mathscr{B}$, and $J_{\mathscr{B}}$ denotes the conjugation on $\h$ for which $J_{\mathscr{B}} \vec{u}_n = \vec{u}_{n}$ for all $n \geq 1$.

\begin{Theorem}\label{matrixedecidsfs98}
Suppose $U$ is a unitary operator on $\h$. Then $C \in \mathscr{C}_s(U)$ if and only if there is a unitary operator $V$ on $\h$ satisfying 
\begin{enumerate}
\item $[V]_{\mathscr{B}}^{t} = [V]_{\mathscr{B}}$;
\item $[V]_{\mathscr{B}} [U]_{\mathscr{B}}^{t} = [U]_{\mathscr{B}} [V]_{\mathscr{B}}$;
\item $C = V\!J_{\mathscr{B}}$.
\end{enumerate}
\end{Theorem}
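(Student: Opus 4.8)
The plan is to reduce the operator identity $CUC = U^{*}$ to its matrix form relative to the fixed orthonormal basis $\mathscr{B}$, using Proposition \ref{9we8rouigjfledw} as the backbone. That proposition already tells us that a conjugation $C$ on $\h$ is exactly a map of the form $C = V\!J_{\mathscr{B}}$ with $V$ unitary and $[V]_{\mathscr{B}}^{t} = [V]_{\mathscr{B}}$; so conditions (a) and (c) are ``free,'' and the entire content of the theorem is that, for such a $C$, the symmetry condition $C \in \mathscr{C}_s(U)$ is equivalent to the single matrix identity (b), namely $[V]_{\mathscr{B}} [U]_{\mathscr{B}}^{t} = [U]_{\mathscr{B}} [V]_{\mathscr{B}}$.

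First I would record the computation that underlies the key step. The crucial algebraic fact, already essentially proved inside Proposition \ref{9we8rouigjfledw} (see the display around \eqref{popoPOPOPOss}), is that for any bounded linear operator $A$ on $\h$ one has $[J_{\mathscr{B}} A J_{\mathscr{B}}]_{\mathscr{B}} = \overline{[A]_{\mathscr{B}}}$; equivalently $J_{\mathscr{B}} A J_{\mathscr{B}}$ is the operator whose matrix is the entrywise conjugate of $[A]_{\mathscr{B}}$. Iterating, since $J_{\mathscr{B}}^2 = I$, one gets $[J_{\mathscr{B}} A^{*} J_{\mathscr{B}}]_{\mathscr{B}} = \overline{[A^{*}]_{\mathscr{B}}} = \overline{\,\overline{[A]_{\mathscr{B}}^{t}}\,} = [A]_{\mathscr{B}}^{t}$. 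Applying this with $A = U$: the operator $J_{\mathscr{B}} U^{*} J_{\mathscr{B}}$ has matrix $[U]_{\mathscr{B}}^{t}$. I would state and prove this as a short preliminary observation (or simply cite the relevant lines of the proof of Proposition \ref{9we8rouigjfledw}), because it converts ``transpose of a matrix'' into a conjugation-conjugate of an operator, which is what lets the matrix condition (b) talk to the operator condition.

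Next I would do the main equivalence. Suppose $C = V\!J_{\mathscr{B}}$ with $V$ unitary and $[V]_{\mathscr{B}}$ symmetric (so $C$ is a conjugation by Proposition \ref{9we8rouigjfledw}). The condition $CUC = U^{*}$ reads $V\!J_{\mathscr{B}} U V\!J_{\mathscr{B}} = U^{*}$. Since $J_{\mathscr{B}}$ is involutive, multiply on the right by $J_{\mathscr{B}}$ and rearrange to $V (J_{\mathscr{B}} U V J_{\mathscr{B}}) = U^{*}$, i.e., $V \cdot J_{\mathscr{B}}(UV)J_{\mathscr{B}} = U^{*}$; by the preliminary observation the second factor has matrix $[UV]_{\mathscr{B}}^{t} = [V]_{\mathscr{B}}^{t}[U]_{\mathscr{B}}^{t} = [V]_{\mathscr{B}}[U]_{\mathscr{B}}^{t}$ (using symmetry of $[V]_{\mathscr{B}}$). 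So $CUC = U^{*}$ holds if and only if, at the level of matrices, $[V]_{\mathscr{B}}\cdot[V]_{\mathscr{B}}[U]_{\mathscr{B}}^{t} = [U^{*}]_{\mathscr{B}} = \overline{[U]_{\mathscr{B}}^{t}}$. Now I would use that $V$ is unitary, so $[V]_{\mathscr{B}}$ is a unitary matrix on $\ell^2_+$ and in particular left-cancellable; but I have to be a touch careful and instead massage this into the symmetric-looking form (b). The cleanest route: rewrite $CUC=U^*$ as $CU = U^*C$, i.e. $V\!J_{\mathscr{B}}U = U^* V\!J_{\mathscr{B}}$; apply $J_{\mathscr B}(\cdot)J_{\mathscr B}$-type bookkeeping, or directly take matrices of both sides after moving one $J_{\mathscr{B}}$ across, to land on $[V]_{\mathscr{B}}[U]_{\mathscr{B}}^{t} = [U]_{\mathscr{B}}[V]_{\mathscr{B}}$. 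Conversely, if $V$ is unitary with $[V]_{\mathscr{B}}$ symmetric and $[V]_{\mathscr{B}}[U]_{\mathscr{B}}^{t}=[U]_{\mathscr{B}}[V]_{\mathscr{B}}$, then $C := V\!J_{\mathscr{B}}$ is a conjugation by Proposition \ref{9we8rouigjfledw}, and running the same matrix identities backwards (all steps are reversible, since passing between an operator and its $\mathscr{B}$-matrix is a bijection and $J_{\mathscr{B}}$ is involutive) yields $CUC = U^{*}$, i.e. $C \in \mathscr{C}_s(U)$.

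The main obstacle is purely bookkeeping: keeping straight which of the three matrix-level operations — transpose $(\cdot)^t$, entrywise conjugate $\overline{(\cdot)}$, and adjoint $(\cdot)^{*}$ — corresponds to which operator-level operation (composition with $J_{\mathscr{B}}$, the Hilbert-space adjoint), and getting the order of factors right so that the final identity comes out in the symmetric form (b) rather than a cosmetically different but equivalent form like $[U]_{\mathscr{B}}^{t}[V]_{\mathscr{B}} = [V]_{\mathscr{B}}[U]_{\mathscr{B}}$. There is also the minor point that infinite matrices must be interpreted as bounded operators on $\ell^2_+$ (as the paper sets up in \eqref{kkKDSFdsf}–\eqref{zoozZZZZellllll}), so ``transpose'' means the transpose of that bounded operator and matrix products are genuine operator products — but no convergence subtleties arise because everything is assembled from the bounded operators $U$ and $V$. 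Once the dictionary between the $J_{\mathscr{B}}$-conjugate and the entrywise conjugate is pinned down, the rest is a two-line reversible computation on each side of the equivalence.
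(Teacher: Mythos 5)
Your overall strategy is the paper's: invoke Proposition \ref{9we8rouigjfledw} so that (a) and (c) are automatic, then translate $CUC=U^{*}$ into a matrix identity via the dictionary $[J_{\mathscr{B}}AJ_{\mathscr{B}}]_{\mathscr{B}}=\overline{[A]_{\mathscr{B}}}$. But the one computation you actually carry out commits exactly the bookkeeping error you warn against at the end, and it lands on a condition that is \emph{not} equivalent to (b). The factor $J_{\mathscr{B}}(UV)J_{\mathscr{B}}$ has matrix $\overline{[UV]_{\mathscr{B}}}=\overline{[U]_{\mathscr{B}}}\;\overline{[V]_{\mathscr{B}}}$, not $[UV]_{\mathscr{B}}^{t}$: your preliminary observation produces a transpose only when the operator being sandwiched is an adjoint, i.e. $[J_{\mathscr{B}}A^{*}J_{\mathscr{B}}]_{\mathscr{B}}=[A]_{\mathscr{B}}^{t}$, and $UV$ is not playing the role of $A^{*}$ here. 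Consequently your displayed equivalence ``$CUC=U^{*}$ iff $[V]_{\mathscr{B}}^{2}[U]_{\mathscr{B}}^{t}=[U^{*}]_{\mathscr{B}}$'' is false. Concretely, take $U=I$ and $V=iI$: then $C=V\!J_{\mathscr{B}}$ is a conjugation lying in $\mathscr{C}_{s}(I)$ and condition (b) holds trivially, yet $[V]_{\mathscr{B}}^{2}[U]_{\mathscr{B}}^{t}=-I\neq I=[U^{*}]_{\mathscr{B}}$.

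Your fallback (``the cleanest route'') is sound and is in substance the paper's proof, but you only assert that it ``lands on'' (b); since that computation \emph{is} the entire content of the theorem beyond Proposition \ref{9we8rouigjfledw}, it has to be executed. The paper's device is to test the equivalent identity $CU^{*}C=U$ and insert $J_{\mathscr{B}}^{2}=I$ so that every $J_{\mathscr{B}}$-sandwich encloses either an adjoint or the symmetric $V$: one gets $V(J_{\mathscr{B}}U^{*}\!J_{\mathscr{B}})(J_{\mathscr{B}}V\!J_{\mathscr{B}})=U$, whose factors have matrices $[U]_{\mathscr{B}}^{t}$ and, by \eqref{popoPOPOPOss}, $[V^{*}]_{\mathscr{B}}$, yielding $[V]_{\mathscr{B}}[U]_{\mathscr{B}}^{t}[V^{*}]_{\mathscr{B}}=[U]_{\mathscr{B}}$, which is (b). Alternatively, your own grouping works once the dictionary is applied correctly: it gives $[V]_{\mathscr{B}}\overline{[U]_{\mathscr{B}}}\,[V]_{\mathscr{B}}^{*}=[U]_{\mathscr{B}}^{*}$ (using $\overline{[V]_{\mathscr{B}}}=[V]_{\mathscr{B}}^{*}$ from symmetry), and taking entrywise conjugates and rearranging recovers $[U]_{\mathscr{B}}[V]_{\mathscr{B}}=[V]_{\mathscr{B}}[U]_{\mathscr{B}}^{t}$. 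Either way the repair is short, but as written the central step is incorrect.
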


\begin{proof}
Proposition \ref{9we8rouigjfledw} says that $C$ is a conjugation on $\h$ if and only if  $C = V\!J_{\mathscr{B}}$ for some unitary $V$ on $\h$ with $[V]_{\mathscr{B}}^{t} = [V]_{\mathscr{B}}$. Now, to fulfill the requirement that $C \in \mathscr{C}_{s}(U)$,  observe that
\begin{align*}
 C U^{*} C = U
& \iff (V\!J_{\mathscr{B}}) U^{*} (V\!J_{\mathscr{B}} ) = U\\
& \iff V (J_{\mathscr{B}} U^{*}\!J_{\mathscr{B}}) (J_{\mathscr{B}} V\!J_{\mathscr{B}}) = U\\
& \iff [V]_{\mathscr{B}} [J_{\mathscr{B}} U^{*}\!J_{\mathscr{B}}]_{\mathscr{B}} [J_{\mathscr{B}} V  \!J_{\mathscr{B}} ]_{\mathscr{B}} = [U]_{\mathscr{B}}\\
& \iff [V]_{\mathscr{B}} [U]_{\mathscr{B}}^{t} [V^{*}]_{\mathscr{B}} = [U]_{\mathscr{B}}.
\end{align*}
Notice the use of \eqref{popoPOPOPOss} above. Thus,
$$C U^{*} C = U \iff [V]_{\mathscr{B}} [U]_{\mathscr{B}}^{t} = [U]_{\mathscr{B}} [V]_{\mathscr{B}},$$
which completes the proof.
\end{proof}

One can use the above discussion to describe $\mathscr{C}_s(U)$ when $U$ is an $n \times n$ unitary matrix.

\begin{Theorem}\label{kjahfgr4iojtegefeds00}
Suppose $U$ is an $n \times n$ unitary matrix with spectral decomposition
$$U = W \begin{bmatrix}
\xi_1 I_{n_1} & & & \\
& \xi_2 I_{n_2} & & \\
& & \ddots  & \\
& & &  \xi_d I_{n_d}
\end{bmatrix}
 W^{*},$$
 where $W$ is an $n \times n$ unitary matrix, $\xi_1, \ldots, \xi_d \in \T$ are the distinct eigenvalues of $U$, $I_{n_j}$ is the $n_j \times n_j$ identity matrix, and $n_1 + n_2 + \cdots + n_d = n$. Then $C \in \mathscr{C}_{s}(U)$ if and only if 
 \begin{equation}\label{CCCococoCccc}
 C =
 W \begin{bmatrix}
 V_1 & & & \\
 & V_2 & & \\
 & & \ddots &\\
 & & & V_d
 \end{bmatrix} 
J W^{*},
\end{equation} where, for each $1 \leq j \leq d$, $V_j$ is an $n_j \times n_j$ unitary matrix  which satisfies $V_{j}^{t} = V_j$, and $J$ is the conjugation on $\C^n$ defined by 
 $$J[x_1 \; x_2 \; \cdots \; x_n]^{t} = [\overline{x_1} \;  \overline{x_2} \; \cdots \; \overline{x_n}]^{t}.$$ 
 \end{Theorem}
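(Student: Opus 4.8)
The plan is to reduce everything to the diagonal case by a unitary change of basis and then to exploit the fact that on each eigenspace the "adjoint identity" $C(U|_{\mathcal{E}})C = U^{*}|_{\mathcal{E}}$ is automatically satisfied. Write $D := \operatorname{diag}(\xi_1 I_{n_1}, \dots, \xi_d I_{n_d})$, so that $U = W D W^{*}$. By Proposition \ref{bnjiovbhucfgyu} we have $\mathscr{C}_{s}(U) = W\,\mathscr{C}_{s}(D)\,W^{*}$, and the right-hand side of \eqref{CCCococoCccc} is exactly $W\,(V_1\oplus\cdots\oplus V_d)\,J\,W^{*}$; hence it suffices to show that $\mathscr{C}_{s}(D)$ consists precisely of the conjugations $(V_1\oplus\cdots\oplus V_d)\,J$ on $\C^n$ with each $V_j$ an $n_j\times n_j$ unitary matrix satisfying $V_j^{t}=V_j$, and then conjugate by $W$.

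For the description of $\mathscr{C}_{s}(D)$, decompose $\C^n=\bigoplus_{j=1}^{d}\mathcal{E}_{\xi_j}$, where $\mathcal{E}_{\xi_j}=\ker(D-\xi_j I)$ is the $j$-th coordinate block $\C^{n_j}$; these are the (reducing) eigenspaces of $D$. Given $C\in\mathscr{C}_{s}(D)$, Corollary \ref{eigennenenen} gives $C=\bigoplus_{j}C|_{\mathcal{E}_{\xi_j}}$ with each restriction a conjugation on $\mathcal{E}_{\xi_j}$ (Lemma \ref{nnNDF99}) satisfying $C|_{\mathcal{E}_{\xi_j}}(D|_{\mathcal{E}_{\xi_j}})C|_{\mathcal{E}_{\xi_j}}=D^{*}|_{\mathcal{E}_{\xi_j}}$. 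But $D|_{\mathcal{E}_{\xi_j}}=\xi_j I_{n_j}$ is a scalar multiple of the identity, and for \emph{any} conjugation $\Gamma$ and scalar $\xi\in\T$ one has $\Gamma(\xi I)\Gamma=\overline{\xi}\,\Gamma^2=\overline{\xi} I=(\xi I)^{*}$, using only antilinearity and $\Gamma^2=I$; thus the adjoint identity on each $\mathcal{E}_{\xi_j}$ is vacuous. It follows that $\mathscr{C}_{s}(D)$ is exactly the set of conjugations on $\C^n$ that are block-diagonal with respect to $\bigoplus_j\mathcal{E}_{\xi_j}$: the forward inclusion is the preceding sentence together with Corollary \ref{eigennenenen}, and conversely any block-diagonal conjugation $C$ satisfies $CDC=D^{*}$ blockwise by the same scalar computation.

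Finally, identify the block-diagonal conjugations with the claimed form. By Proposition \ref{KHADFKDJSHF}, a conjugation on $\mathcal{E}_{\xi_j}=\C^{n_j}$ is of the form $V_j J_{n_j}$ with $V_j$ unitary, $V_j^{t}=V_j$, and $J_{n_j}$ the entrywise conjugation on $\C^{n_j}$; since the standard conjugation $J$ on $\C^n$ restricts to $J_{n_j}$ on the $j$-th block, $J=J_{n_1}\oplus\cdots\oplus J_{n_d}$. Hence a block-diagonal conjugation $C=\bigoplus_j(V_j J_{n_j})$ equals $(V_1\oplus\cdots\oplus V_d)J$, and conversely $V:=V_1\oplus\cdots\oplus V_d$ with each $V_j$ unitary and symmetric is itself unitary with $V^{t}=V$, so $VJ$ is a block-diagonal conjugation (Proposition \ref{KHADFKDJSHF}), hence in $\mathscr{C}_{s}(D)$. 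Conjugating by $W$ (Proposition \ref{bnjiovbhucfgyu}, or Lemma \ref{lem1.2}) yields \eqref{CCCococoCccc}. There is no serious obstacle here; the only points needing care are the routine bookkeeping that a block-diagonal unitary is symmetric exactly when each block is, and the observation that in \eqref{CCCococoCccc} the factor $J$ genuinely sits between $W(V_1\oplus\cdots\oplus V_d)$ and $W^{*}$ and cannot be moved past $W$, since $W$ need not have real entries. (Alternatively, one could run the same computation through Theorem \ref{matrixedecidsfs98} by taking $\mathscr{B}$ to be the columns of $W$, in which case condition (b) of that theorem becomes the statement that $[V]_{\mathscr{B}}$ commutes with $D$, i.e.\ is block-diagonal.)
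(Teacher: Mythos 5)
Your proof is correct and follows essentially the same route as the paper's: decompose along the eigenspaces via Corollary \ref{eigennenenen}, observe that the adjoint identity is automatic on each block because $D|_{\mathcal{E}_{\xi_j}}=\xi_j I_{n_j}$ is scalar, apply the symmetric-unitary description of conjugations blockwise (you cite Proposition \ref{KHADFKDJSHF}; the paper invokes Theorem \ref{matrixedecidsfs98} and notes its condition (b) is vacuous, which amounts to the same thing), and conjugate by $W$. The only cosmetic difference is that the paper verifies the converse by an explicit matrix computation, whereas you deduce it from the same scalar observation, which is equally valid.
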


\begin{proof}
Suppose that $C \in \mathscr{C}_{s}(U)$. 
For the unitary matrix 
$$\widetilde{U} =  \begin{bmatrix}
\xi_1 I_{n_1} & & & \\
& \xi_2 I_{n_2} & & \\
& & \ddots  & \\
& & &  \xi_d I_{n_d}
\end{bmatrix},$$
Corollary \ref{eigennenenen} says that any $\widetilde{C} \in \mathscr{C}_{s}(\widetilde{U})$ can be written as 
$$\widetilde{C} = \widetilde{C}|_{\mathcal{E}_{\xi_{1}}} \oplus \widetilde{C}|_{\mathcal{E}_{\xi_{2}}}  \oplus \cdots \oplus \widetilde{C}|_{\mathcal{E}_{\xi_{d}}},$$
where 
$$\mathscr{E}_{j} = \ker (\widetilde{U} - \xi_j  I)$$
(which is the span of appropriate standard basis vectors for $\C^n$)
and
$$\widetilde{C}_{j}:= \widetilde{C}|_{\mathcal{E}_{\xi}} \in \mathscr{C}_s( \widetilde{U}|_{\mathscr{E}_{\xi_j}})  \; \mbox{for all $1 \leq j \leq d$}.$$
 Theorem \ref{matrixedecidsfs98} says that for each $1 \leq j \leq d$  there is an $n_j \times n_j$ unitary matrix $V_j$ such that such that $\widetilde{C}_{j} = V_{j} J|_{\mathscr{E}_{j}}$ and $V_{j}^{t} = V_j$. Note that $J|_{\mathscr{E}_j}$ fixes the appropriate standard basis vectors. Since $\widetilde{U}|_{\mathcal{E}_{\xi_j}} = \xi_j I_{n_j}$, condition (b) of Theorem \ref{matrixedecidsfs98} is automatic. Thus, every $\widetilde{C} \in \mathscr{C}_{s}(\widetilde{U})$ takes the form 
 $$ \begin{bmatrix}
 V_1 & & & \\
 & V_2 & & \\
 & & \ddots &\\
 & & & V_d
 \end{bmatrix} 
J.$$
Now apply Lemma \ref{lem1.2} and Proposition \ref{bnjiovbhucfgyu} to see that $C$ takes the form in \eqref{CCCococoCccc}.

Conversely suppose that $C$ is the conjugation from \eqref{CCCococoCccc}. Then, due to the fact that each of the unitary matrices $V_j$ satisfy  $V_{j}^{t} = V_j$ and  $J A J = \overline{A}$ for any matrix $A$,  $C U C$ is equal to 
\begin{align*}
 &  W  \begin{bmatrix}
 V_1 & & & \\
 & V_2 & & \\
 & & \ddots &\\
 & & & V_d
 \end{bmatrix} J \begin{bmatrix}
\xi_1 I_{n_1} & & & \\
& \xi_2 I_{n_2} & & \\
& & \ddots  & \\
& & &  \xi_d I_{n_d}
\end{bmatrix}
 \begin{bmatrix}
 V_1 & & & \\
 & V_2 & & \\
 & & \ddots &\\
 & & & V_d
 \end{bmatrix} J W^{*} \\
& = W  \begin{bmatrix}
 V_1 & & & \\
 & V_2 & & \\
 & & \ddots &\\
 & & & V_d
 \end{bmatrix} J \begin{bmatrix}
\xi_1  V_1 & & & \\
& \xi_2  V_2 & & \\
& & \ddots  & \\
& & &  \xi_d V_d
\end{bmatrix}
J W^{*}\\
& = W \begin{bmatrix}
 V_1 & & & \\
 & V_2 & & \\
 & & \ddots &\\
 & & & V_d
 \end{bmatrix} \begin{bmatrix}
\overline{\xi_1} \overline{V_1} & & & \\
&\overline{ \xi_2} \overline{ V_2} & & \\
& & \ddots  & \\
& & & \overline{ \xi_d} \overline{ V_d}
\end{bmatrix} W^{*}\\
& = W \begin{bmatrix}
 V_1 & & & \\
 & V_2 & & \\
 & & \ddots &\\
 & & & V_d
 \end{bmatrix} \begin{bmatrix}
\overline{\xi_1} V_{1}^{*} & & & \\
&\overline{ \xi_2} V_{2}^{*} & & \\
& & \ddots  & \\
& & & \overline{ \xi_d} V_{d}^{*}
\end{bmatrix} W^{*}\\
& =  W  \begin{bmatrix}
\overline{\xi_1} I_{n_1} & & & \\
&\overline{ \xi_2} I_{n_2} & & \\
& & \ddots  & \\
& & & \overline{ \xi_d} I_{d_d}
\end{bmatrix} W^{*} = U^{*}.
\end{align*}
Thus $C \in \mathscr{C}_{c}(U)$, which completes the proof. 
\end{proof}

In certain circumstances, we can extend Theorem \ref{kjahfgr4iojtegefeds00} to the infinite dimensional setting. We mention two interesting examples of unitary operators from harmonic analysis.

\begin{Example}\label{Foureoirtert}
Let 
  $$(\mathcal{F}f)(x) = \frac{1}{\sqrt{2 \pi}} \int_{-\infty}^{\infty} f(t) e^{-ixt} dt,$$ denote the classical Fourier--Plancherel transform on $L^2(\R)$ (defined initially on $L^1(\R) \cap L^2(\R)$ by the above integral and extended to be a unitary operator on $L^2(\R)$). It is known that  $\sigma(\mathcal{F}) = \sigma_{p}(\mathcal{F}) = \{1, -1, i, -i\}$ and $\mathcal{F} H_n = (-i)^n H_n$,
where $H_n$ is the $n$th Hermite function, i.e., $$H_{n}(x) = c_n e^{-x^2/2} h_{n}(x), \; \; n \geq 0,$$ 
 $c_n$ is a constant for which $\|H_n\|_{L^2(\R)}= 1$, and $h_n$ is the $n$-th Hermite polynomial.
 Moreover, $\mathscr{B} = \{H_{n}\}_{n \geq 0}$ forms an orthonormal basis for $L^2(\R)$ \cite[Ch.11]{MR4545809}.
Thus, using our earlier notation from Corollary \ref{eigennenenen},
\begin{equation}\label{77y71199UU}
L^2(\R) = \mathcal{E}_{1} \bigoplus \mathcal{E}_{-i} \bigoplus \mathcal{E}_{-1} \bigoplus \mathcal{E}_{i},
\end{equation} where
$\mathscr{B}_{1} = \{H_{4 k}\}_{k \geq 0},$
$\mathscr{B}_{-i} = \{H_{4 k + 1}\}_{k \geq 0},$
$\mathscr{B}_{-1} = \{H_{4 k + 2}\}_{k \geq 0},$
$\mathscr{B}_{i} = \{H_{4 k + 3}\}_{k \geq 0},$ are orthonormal bases for $\mathcal{E}_{1},  \mathcal{E}_{-i}, \mathcal{E}_{-}, \mathcal{E}_{-i}$ respectively, we can write $\mathcal{F}$ in block operator form with respect to the decomposition in \eqref{77y71199UU} as
$$\mathcal{F} =
\begin{bmatrix}
I|_{\mathcal{E}_{1}} & & & \\
& -i I|_{\mathcal{E}_{-i}} & & \\
& & -I|_{\mathcal{E}_{-1}} & \\
& & & i I|_{\mathcal{E}_{i}}
\end{bmatrix}.$$
Furthermore,  any conjugation $C$ on $L^2(\R)$ for which $C \mathcal{F} C = \mathcal{F}^{*}$ must take the (block)  form
$$C = \begin{bmatrix}
V_1 & & &\\
& V_{-i} & &\\
& & V_{-1} &\\
& & & V_i\\
\end{bmatrix}
\begin{bmatrix}
J_1 & & &\\
& J_{-i} & &\\
& & J_{-1} &\\
& & & J_{i}\\
\end{bmatrix},$$
where, for each $k \in \{1, -i, -1, i\}$, $J_{k}$ is the conjugation on $\mathcal{E}_{k}$ which fixes every element of $\mathscr{B}_{k}$ and $V_{k}$ is a unitary operator on $\mathcal{E}_{k}$ for which $[V_k]_{\mathscr{B}_k}^{t} = [V_k]_{\mathscr{B}_k}$.
\end{Example}

\begin{Example}\label{Hilertrt}
Suppose
 $$(\mathscr{H} g)(x) = \frac{1}{\pi}  \operatorname{PV} \int_{-\infty}^{\infty} \frac{g(t)}{x - t} dt,$$ is the Hilbert transform on $L^2(\R)$. Then $\sigma(\mathscr{H}) = \sigma_{p}(\mathscr{H}) = \{i, -i\}$ and, again using the notation from Corollary \ref{eigennenenen},
$L^2(\R) = \mathcal{E}_{i} \bigoplus \mathcal{E}_{-i}.$
Moreover, $\mathcal{E}_{i}$ has orthonormal basis $\mathscr{B}_{i} = \{f_n\}_{n \geq 1}$, where
$$f_n(x) = \frac{1}{\sqrt{\pi}} \frac{(x + i)^{n - 1}}{(x - i)^{n}},$$
and $\mathcal{E}_{-i}$ has orthonormal basis $\mathscr{B}_{-i} = \{g_n\}_{n \geq 0}$, where 
$$g_{n}(x) = \frac{1}{\sqrt{\pi}} \frac{(x - i)^{n}}{(x + i)^{n + 1}}.$$
See \cite[Ch. 12]{MR4545809} for the details.
Moreover, by a similar discussion as in the previous example, any conjugation $C$ on $L^2(\R)$ for which $C \mathscr{H} C = \mathscr{H}^{*}$ must take the  (block) form
$$C =
\begin{bmatrix}
V_i & \\
& V_{-i}
\end{bmatrix}
\begin{bmatrix}
J_{i} & \\
& J_{-i}
\end{bmatrix},$$
where for each $k \in \{i, -i\}$, $J_{k}$ is the conjugation on $\mathcal{E}_{k}$ which fixes every element of $\mathscr{B}_{i}$ and $V_{k}$ is a unitary operator on $\mathcal{E}_{k}$ for which $[V_k]_{\mathscr{B}_{k}}^{t} = [V_k]_{\mathscr{B}_{k}}$.
\end{Example}

    \section{Natural conjugations on vector valued $L^2$ spaces}\label{section3}

 This section provides a model for conjugations on vector valued Lebesgue spaces. It will be useful for our description of $\mathscr{C}_s(U)$ in \S \ref{ST} and also sets up our discussion of models for bilateral shifts, and their associated conjugations, in the next section.

For a Hilbert space $\h$ with norm $\|\cdot\|_{\h}$ and $\mu \in M_{+}(\T)$, consider the set $\mathscr{L}^{0}(\mu, \h)$ of all $\h$-valued $\mu$-measurable functions $\f$ on $\T$ and the Hilbert space
$$\mathscr{L}^{2}(\mu, \h) := \Big\{\f \in \mathscr{L}^{0}(\mu, \h): \|\f\|_{L^2(\mu. \h)} :=  \Big(\int_{\T} \|\f(\xi)\|^{2}_{\h}d\mu(\xi)\Big)^{1/2} < \infty\Big\}.$$ 
One often sees this using tensor notation as $L^2(\mu) \otimes \h$. Also consider $\mathscr{L}^{\infty}(\mu, \mathcal{B}(\h))$, the $\mu$-essentially bounded $\mathcal{B}(\h)$-valued functions $\U$ on $\T$. For $\U \in \mathscr{L}^{\infty}(\mu, \mathcal{B}(\h))$, define the multiplication operator $\M_{\U}$ on $\mathscr{L}^2(\mu, \h)$ by
\begin{equation}\label{e1.4}
  (\M_{\U}\,\f)(\xi)=\U(\xi)\f(\xi)
\end{equation}
for  $\f\in \mathscr{L}^2(\mu,\h)$ and $\mu$-almost every $\xi \in \T$.
Clearly $\M_{\U}\in \mathcal{B}(\mathscr{L}^2(\mu,\h))$. If we use the notation $\U^{*}(\xi) = \U(\xi)^{*}$, one can verify that
$$\M_{\U}^{*} = \M_{\U^{*}}.$$ We let $L^{\infty}(\mu) := \mathscr{L}^{\infty}(\mu, \C)$ to denote the set of all  scalar valued $\mu$-essentially bounded functions on $\T$. For ease of notation, ee will write $\M_{\varphi}$, when $\phi \in L^{\infty}(\mu)$, in place of the more cumbersome $\M_{\varphi \I_{\h}}$, that is,
\begin{equation}\label{e1.5}
  (\M_\varphi \f)(\xi)=(\M_{\varphi \I_\h}\f)(\xi)=\varphi(\xi)\f(\xi)
\end{equation}
for $\f \in \mathscr{L}^2(\mu, \h)$ and for $\mu$-almost every $\xi \in \T$. Of particular importance is when  $\phi(\xi) = \xi$ which yields the (bilateral) shift $\M_{\xi}$ on $\mathscr{L}^2(\mu, \h)$. As a specific example, we have {\em the} bilateral shift $(M_{\xi} f)(\xi) = \xi f(\xi)$ on $L^2(m, \T)$ when $\h = \C$.

Recall from \S \ref{NxcvxcvxcvxXX} that $\mathscr{A}\!\mathcal{B}(\h)$ denotes the space of all bounded antilinear operators on $\h$ . By $\mathscr{L}^{\infty}(\mu, \mathscr{A}\!\mathcal{B}(\h))$ we denote the space of  all $\mu$-essentially bounded and $\mathscr{A}\!\mathcal{B}(\h)$-valued Borel functions on $\T$. Analogously, as in \eqref{e1.4}, for $\CC\in \mathscr{L}^{\infty}(\mu, \mathscr{A}\!\mathcal{B}(\h))$, define
\begin{equation*}
  (\A_\CC \f)(\xi)=\CC(\xi)\f(\xi)
\end{equation*}
for $\f\in \mathscr{L}^2(\mu,\h)$ and for $\mu$-almost every $\xi \in \T$.
One can check that   $\A_\CC\in \mathscr{A}\!\mathcal{B}(\mathscr{L}^2(\mu,\h))$.

For any conjugation $J$ on $\h$, define the associated  conjugation $\J$ on $\mathscr{L}^2(\mu,\h)$ by
\begin{equation}\label{e1.6}
  (\J\f)(\xi)=J(\f(\xi)).
  \end{equation}
  For example, when $\h = \C$ and $J z = \overline{z}$ on $\C$, then $\J$ is the conjugation on $L^2(\mu)$ defined by $\J f = \overline{f}$ and discussed earlier. 
One can check that for  $\f \in \mathscr{L}^2(\mu, \h)$ and $\mu$-almost every $\xi \in \T$ we have 
$$(\J \M_{\xi} \J \f)(\xi) = (\M_{\bar \xi} \f)(\xi)$$
and thus,
\begin{equation}\label{Jsjjassayyyy}
 \J\M_\xi\J=\M_{\bar \xi}.
\end{equation}

Proposition \ref{p3.4} below echos a result from \cite[Proposition 4.2]{MR4169409} and relies on the following (easily verified) lemma.
\begin{Lemma}\label{csu}
  Let $U$ be a unitary operator and $C$ be a conjugation on $\mathcal{H}$. Then $UC$ is a conjugation on $\h$ if and only if $C \in \mathscr{C}_{s}(U)$.
\end{Lemma}

\begin{Proposition}\label{p3.4} Let $J$ be a conjugation on $\h$, $\J$ be defined by \eqref{e1.6},  and let $\U\in \mathscr{L}^{\infty}(\mu, \mathcal{B}(\h))$ be such that $\U(\xi)$ is unitary for $\mu$-almost every $\xi \in \T$. Then the following hold. 
\begin{enumerate}
  \item $\M_\U\J$ is a conjugation on $\mathscr{L}^2(\mu, \h)$ if and only if $\U(\xi)$ is $J$--symmetric for $\mu$-almost every $\xi \in \T$.
  \item When $\M_\U\J$ is a conjugation on $\mathscr{L}^2(\mu, \h)$, we have $(\M_\U\J)\M_\xi(\M_\U\J)=\M_{\bar \xi}$.
\end{enumerate}
\end{Proposition}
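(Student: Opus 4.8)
The plan is to verify both statements by reducing everything to pointwise ($\mu$-a.e.) conditions on $\T$, exactly as the setup of the section encourages. For part (a), recall from Lemma \ref{csu} that $\M_\U\J$ is a conjugation precisely when it is antilinear, isometric, and involutive. Antilinearity and the isometry property are automatic: $\J$ is a conjugation, so it is an antilinear isometry, and $\M_\U$ is unitary on $\mathscr{L}^2(\mu,\h)$ since $\U(\xi)$ is unitary $\mu$-a.e. (this also shows $\M_\U\J$ is onto). So the only real content is the involutivity $(\M_\U\J)^2 = I$. Here I would compute, for $\f\in\mathscr{L}^2(\mu,\h)$ and $\mu$-a.e.\ $\xi$,
\begin{equation*}
((\M_\U\J)^2\f)(\xi) = \U(\xi)\,J\bigl(\U(\xi)\,J(\f(\xi))\bigr) = \U(\xi)\,(J\U(\xi)J)(\f(\xi)).
\end{equation*}
Thus $(\M_\U\J)^2 = I$ iff $\U(\xi)(J\U(\xi)J) = I_\h$ for $\mu$-a.e.\ $\xi$, i.e.\ $J\U(\xi)J = \U(\xi)^{-1} = \U(\xi)^{*}$ for $\mu$-a.e.\ $\xi$, which is exactly the statement that $\U(\xi)$ is $J$-symmetric $\mu$-a.e. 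The one point requiring a word of care is the measurability/essential-boundedness bookkeeping needed to pass between the pointwise identity and the operator identity on $\mathscr{L}^2(\mu,\h)$; since $\U\in\mathscr{L}^\infty(\mu,\mathcal{B}(\h))$ and $\xi\mapsto J\U(\xi)J$ is the composition of the fixed bounded antilinear map $J$ with $\U(\xi)$, this map is again $\mu$-essentially bounded and measurable, so $\M_{J\U(\cdot)J}$ makes sense and the composition law $\M_\U\M_{J\U(\cdot)J} = \M_{\U(\cdot)J\U(\cdot)J}$ holds; I expect this is the only mildly technical step, and it is routine.

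For part (b), assume $\M_\U\J$ is a conjugation. The cleanest route is to combine Lemma \ref{csu} with \eqref{Jsjjassayyyy}: it suffices to show $\M_\U\J\in\mathscr{C}_s(\M_\xi)$, i.e.\ $(\M_\U\J)\M_\xi(\M_\U\J) = \M_{\bar\xi}$. Using that $\M_\xi$ commutes with the scalar-free multiplication $\M_\U$ up to the conjugation — more precisely, $\J\M_\xi\J = \M_{\bar\xi}$ from \eqref{Jsjjassayyyy}, and $\M_\U$ commutes with $\M_{\bar\xi}$ because $\bar\xi$ is a scalar function (so $\M_{\bar\xi} = \M_{\bar\xi\I_\h}$ commutes with any $\M_\U$) — I would write
\begin{equation*}
(\M_\U\J)\M_\xi(\M_\U\J) = \M_\U\,(\J\M_\xi\J)\,(\J\M_\U\J)\cdot\text{(nothing)}
\end{equation*}
and organize the bracketing carefully; the practical computation is: $\M_\U\J\M_\xi\M_\U\J = \M_\U(\J\M_\xi\J)(\J\M_\U\J)$, then use $\J\M_\xi\J=\M_{\bar\xi}$, note $\J\M_\U\J = \M_{J\U(\cdot)J}$, and invoke $J\U(\xi)J = \U(\xi)^*$ from part (a) to get $\J\M_\U\J = \M_{\U^*}$. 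This leaves $\M_\U\M_{\bar\xi}\M_{\U^*} = \M_{\bar\xi}\M_\U\M_{\U^*} = \M_{\bar\xi}$, since $\bar\xi$ is scalar-valued and $\M_\U\M_{\U^*} = \M_{\U\U^*} = I$. That completes (b).

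I do not anticipate a genuine obstacle here — the proposition is essentially a pointwise unwinding — but if there is a subtle point it is entirely in (a): making sure that ``$\U(\xi)$ is $J$-symmetric for $\mu$-a.e.\ $\xi$'' is the correct measurable translation of ``$(\M_\U\J)^2=I$'', which requires knowing that an operator on $\mathscr{L}^2(\mu,\h)$ of the form $\M_\W$ is the identity iff $\W(\xi)=I_\h$ $\mu$-a.e. This is standard (test against $\f = \chi_\Omega\,\vec v$ for Borel $\Omega$ and $\vec v\in\h$ from a countable dense set), and I would simply cite it or dispatch it in one line, since the analogous fact is already implicitly used elsewhere in the section (e.g.\ in $\M_\U^* = \M_{\U^*}$).
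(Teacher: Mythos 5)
Your proposal is correct and follows essentially the same route as the paper: both reduce everything to the pointwise identity $J\U(\xi)J = \U(\xi)^{*}$ $\mu$-a.e.\ (the paper gets there via Lemma \ref{csu} in the form $\M_\U\J = \J\M_{\U^{*}}$, you by squaring $\M_\U\J$ directly, which is the same computation), and part (b) is the same unwinding using \eqref{Jsjjassayyyy} and $\U(\xi)\U^{*}(\xi)=I$.
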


\begin{proof}
 The proofs that the mapping $\M_{\U}\J$ is antilinear and isometric on the space $\mathscr{L}^2(\mu, \h)$ are straightforward. Assume that $\M_{\U} \J$ is a conjugation. It follows from Lemma \ref{csu} that  for every $\f \in \mathscr{L}^2(\mu, \h)$,
$$ (\M_{\U} \J \f) (\xi) = \U(\xi) (\J \f)(\xi) = \U(\xi) J(\f(\xi))$$
must be equal to
$$(\J \M_{\U^{*}} \f)(\xi) = J((\M_{\U^{*}}\f)(\xi)) = J((\U^{*}(\xi) \f)(\xi)).$$
Therefore, $\U(\xi) J = J \U^{*}(\xi)$ for $\mu$-almost every $\xi \in \T$ and thus, $\U(\xi)$ is $J$-symmetric for $\mu$-almost every $\xi \in \T$. The above argument can be reversed, which proves (a).

To prove (b), observe that for any $\f \in\mathscr{ L}^2(\mu, \h)$ we use the fact that $J \U(\xi) J = \U^{*}(\xi)$ for $\mu$-almost every $\xi \in \T$ to see that 
\begin{align*}
(\M_{\U} \J) \M_{\xi} (\M_{\U} \J \f)(\xi) & = \U(\xi) (\J \M_{\xi} \M_{\U} \J \f)(\xi)\\
& = \U(\xi) J ((\M_{\xi} \M_{\U}\J \f)(\xi))\\
& = \U(\xi) J((\xi \M_{\U} \J \f)(\xi))\\
& = \bar \xi\U(\xi) J((\M_{\U} \J \f)(\xi))\\
& = \bar \xi\U(\xi) J((\J \M_{\U^{*}} \f)(\xi))\\
& = \bar \xi \U(\xi) \U^{*}(\xi) \f(\xi)\\
& = \bar \xi \f(\xi)\\
& = (\M_{\bar{\xi}}\,\f)(\xi).\qedhere
\end{align*}
\end{proof}

\section{Conjugations and bilateral shifts}\label{bilarererer}
Many interesting, and naturally occurring,  unitary operators are bilateral shifts. Examples include the Hilbert and Fourier transforms on $L^2(\R)$;  the translation operator $(U f)(x) = f(x - 1)$ on $L^2(\R)$; the dilation operator $(U f)(x) = \sqrt{2} f(2 x)$ on $L^2(\R)$; and the special class of multiplication operators $U f = \psi f$ on $L^2(m, \T)$ (where $\psi$ is an inner function) which will be the focus of the next section.  The fact that the above operators are bilateral shifts were carefully explained in \cite{JMMPWR_OT28}. This section gives an initial  description of $\mathscr{C}_s(U)$ for this class of operators.  Another description will be discussed in the next section. We begin with precisely what we mean by the term  ``bilateral shift''.

\begin{Definition}\label{lafdhgjdfpsgszhnigbv888}
A unitary operator $U$ on $\h$ is a {\em bilateral shift} if there is a subspace $\mathcal{M} \subset \h$ for which
\begin{enumerate}
\item $U^{n} \mathcal{M} \perp \mathcal{M}$ for all $n \in \Z \setminus \{0\}$;
\item ${\displaystyle \h = \bigoplus_{n = -\infty}^{\infty} U^{n} \mathcal{M}}$.
\end{enumerate}
In the above, note that $U^{-1} = U^{*}$. The subspace $\mathcal{M}$ is called an {\em associated wandering subspace} for the bilateral shift $U$.  Of course there is {\em the} bilateral shift $M_{\xi}$ on $L^2(m, \T)$ defined by $(M_{\xi} f)(\xi)  = \xi f(\xi)$ where the wandering subspace $\mathcal{M}$ is the space of constant functions. 
\end{Definition}

Though the wandering subspace $\mathcal{M}$ in Definition \ref{lafdhgjdfpsgszhnigbv888}  is not unique, its dimension is \cite{MR152896}. The term ``bilateral shift ''  comes from the fact that since
\begin{equation}\label{e15}\mathcal{H}=\bigoplus_{n=-\infty}^{\infty}\, U^n \mathcal{M},
\end{equation}
every $\vec{x} \in \h$ can be uniquely represented as
$$\vec{x} = \sum_{n = -\infty}^{\infty} U^n \vec{x}_n, \; \mbox{where $\vec{x}_n \in \mathcal{M}$ for all $n \in \Z$}.$$
This allows us to define a natural unitary operator
 \begin{equation}\label{WWWWW}
W: \h \to \mathscr{L}^2(m,\m), \quad  W\Big(\bigoplus_{n = -\infty}^{\infty}U^n \vec{x}_n\Big)=\sum_{n = -\infty}^{\infty} \vec{x}_n \xi^n.\end{equation}
 Moreover,
 $WUW^{*}=\M_\xi,$
 where
  $\M_{\xi}$ is the bilateral shift from \eqref{e1.5} defined on $\mathscr{L}^2(m, \mathcal{M})$ by
  $$(\M_{\xi} \f)(\xi) = \xi \f(\xi), \quad   \f(\xi) = \sum_{n = -\infty}^{\infty} \vec{x}_{n} \xi^n \in \mathscr{L}^2(m, \mathcal{M}).$$

  \begin{Example}\label{ljdgflg9osa}
  Examples of bilateral shifts come from a variety of places. These operators were explored in \cite{JMMPWR_OT28} using a Wold-type decomposition. 
  \begin{enumerate}
  \item If $U: L^2(\R) \to L^2(\R)$ is the unitary translation operator defined by $(Uf)(x) = f(x - 1)$ and one sets
  $\mathcal{M} = \chi_{[0, 1]} L^2(\R)$, then $\mathcal{M}$ satisfies conditions (a) and (b) of Definition \ref{lafdhgjdfpsgszhnigbv888} and hence $U$ is unitarily equivalent to $\M_{\xi}$ on $\mathscr{L}^2(m, \mathcal{M})$.
  \item Let $U:L^2(\R) \to L^2(\R)$ be the unitary dilation operator defined by $(Uf)(x) = \sqrt{2} f(2x)$. If
$$\psi(x) = \begin{cases}
\phantom{-}1 & \mbox{for $0 \leq x < \frac{1}{2}$},\\
-1 & \mbox{for $\frac{1}{2} \leq x \leq 1$},\\
\phantom{-}0 & \mbox{otherwise},
\end{cases}$$
then (Haar) wavelet theory \cite{MR1150048} says that the functions
$$\psi_{n, k}(x) := 2^{\frac{n}{2}} \psi(2^{n} x - k), \quad n, k \in \Z,$$
form an orthonormal basis for $L^2(\R)$.
For fixed $\ell \in \Z$, define 
$$\mathcal{W}_{\ell} := \bigvee\{\psi_{\ell, k}: k \in \Z\}.$$
Then  $\mathcal{W}_{\ell} \perp \mathcal{W}_{\ell'}$ for all $\ell, \ell' \in \Z, \ell \not = \ell'$,
and $U \mathcal{W}_{\ell} = \mathcal{W}_{\ell + 1}$ for all $\ell \in \Z$. This means that the subspace $\mathcal{M} = \mathcal{W}_{0}$ satisfies  conditions (a) and (b) of Definition \ref{lafdhgjdfpsgszhnigbv888} and hence $U$ is unitarily equivalent to $\M_{\xi}$ on $\mathscr{L}^2(m, \mathcal{M})$.
\item For an inner function $\psi$ (a bounded analytic function on $\D$ whose radial boundary values are unimodular for $m$-almost every $\xi \in \T$), the operator $M_{\psi} f = \psi f$ is unitary on $L^2 = L^2(m, \T)$ and the model space $\mathcal{M} = \K_{\psi} = H^2 \cap (\psi H^2)^{\perp}$ satisfies the hypothesis of  conditions (a) and (b) of Definition \ref{lafdhgjdfpsgszhnigbv888} \cite[Proposition 5.17]{JMMPWR_OT28}. Hence $M_{\psi}$ is unitarily equivalent to $\M_{\xi}$ on $\mathscr{L}^2(m, \mathcal{M})$. Observe that if $U$ is any  bilateral shift on $\h$ with wandering subspace $\mathcal{N}$ with $\operatorname{dim} \mathcal{N} = N \in \N \cup \{\infty\}$, then the above discussion shows that $U$ is unitarily equivalent to $M_{\psi}$ on $L^2$, where $\psi$ is any inner function whose degree is $N$. When the inner function $\psi$ is a finite Blaschke product with $N$ zeros (repeated according to multiplicity), the {\em degree of $\psi$} is defined to be $N$. In all other cases, the degree of $\psi$ is defined to be $N = \infty$. In the next section, will give a concrete description of $\mathscr{C}_s(M_{\psi})$ written in terms of operators on $L^2$.
  \end{enumerate}
  \end{Example}

For a bilateral shift $U$ on $\h$ we wish to describe $\mathscr{C}_s(U)$.
Since $W U W^{*} = \M_{\xi}$ on $\mathscr{L}^2(m, \mathcal{M})$, where $W: \h \to \mathscr{L}^2(m, \mathcal{M})$ is the unitary operator from \eqref{WWWWW}, Lemma \ref{lem1.2} says that $\CCC=WCW^{*}$  is a conjugation on $\mathscr{L}^2(m,\m)$ such that $\CCC\mathbf{M}_\xi\CCC=\mathbf{M}_{\bar \xi}$.
The following result from  \cite[Thm.~4.8]{MR4169409} describes $\CCC$.

\begin{Theorem}\label{jjHHbbhGHJK}
For a conjugation  $\CCC$ on  $\mathscr{L}^2(m,\m)$,
 the following are equivalent.
 \begin{enumerate}
 \item $\CCC \in \mathscr{C}_{s}(\mathbf{M}_\xi)$;
  \item There is a $\CC\in \mathscr{L}^\infty(m, \mathscr{A}\!\mathcal{B}(\m))$ such that $\CCC=\A_{{\CC}}$ and $\CC(\xi)$ is a conjugation for almost all $\xi\in\mathbb{T}$;
      \item For any conjugation $J$ on $\m$ there is a $\mathbf{U}\in \mathscr{L}^\infty(m,\mathcal{B}(\m))$ such that $\mathbf{U}(\xi)$ is a $J$--symmetric unitary operator for almost all $\xi\in\mathbb{T}$ and $\CCC= \M_{\mathbf{U}}{\mathbf{J}}$.
      \end{enumerate}
\end{Theorem}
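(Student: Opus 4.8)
The plan is to establish the cycle $(c)\Rightarrow(a)\Rightarrow(b)\Rightarrow(c)$, with the implication $(a)\Rightarrow(b)$ being the substantive analytic step and $(c)\Rightarrow(a)$, $(b)\Rightarrow(c)$ being essentially bookkeeping.

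First, $(c)\Rightarrow(a)$: given a conjugation $J$ on $\m$ and a $\U\in\mathscr{L}^\infty(m,\mathcal{B}(\m))$ with $\U(\xi)$ a $J$-symmetric unitary for a.e.\ $\xi$, Proposition \ref{p3.4}(a) tells us that $\CCC=\M_\U\J$ is a conjugation on $\mathscr{L}^2(m,\m)$, and Proposition \ref{p3.4}(b) gives $\CCC\M_\xi\CCC=\M_{\bar\xi}=\M_\xi^*$, i.e.\ $\CCC\in\mathscr{C}_s(\M_\xi)$. This is immediate. Next, $(b)\Rightarrow(c)$: suppose $\CCC=\A_\CC$ for some $\CC\in\mathscr{L}^\infty(m,\mathscr{A}\!\mathcal{B}(\m))$ with $\CC(\xi)$ a conjugation a.e. Fix any conjugation $J$ on $\m$ (one exists — e.g.\ $J_{\mathscr{B}}$ for an orthonormal basis $\mathscr{B}$ of $\m$). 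Define $\U(\xi):=\CC(\xi)J$. Since the product of a conjugation and a conjugation is a unitary operator on $\m$ (linear, isometric, onto), $\U(\xi)$ is unitary a.e.; it is $\mathcal{B}(\m)$-valued and measurable with $\|\U(\xi)\|=1$, so $\U\in\mathscr{L}^\infty(m,\mathcal{B}(\m))$. Then $\A_\CC = \M_\U\J$ because $\CC(\xi)=\U(\xi)J$ acting pointwise is exactly $(\M_\U\J)$; and $\U(\xi)$ is $J$-symmetric a.e.\ by Proposition \ref{p3.4}(a) applied in reverse (since $\M_\U\J=\A_\CC$ is a conjugation). So (c) holds.

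The heart of the matter is $(a)\Rightarrow(b)$: producing, from an abstract conjugation $\CCC$ on $\mathscr{L}^2(m,\m)$ satisfying $\CCC\M_\xi\CCC=\M_{\bar\xi}$, a measurable pointwise field $\xi\mapsto\CC(\xi)$ of conjugations on $\m$ with $\CCC=\A_\CC$. The strategy is to show first that $\CCC$ \emph{decomposes}, i.e.\ that $\CCC$ is a \emph{decomposable} antilinear operator with respect to the direct integral $\mathscr{L}^2(m,\m)=\int_{\T}^{\oplus}\m\,dm(\xi)$. The key leverage is the commutation relation with $\M_\xi$: iterating it gives $\CCC\M_{\xi^n}\CCC=\M_{\bar\xi^n}$ for all $n\in\Z$, hence (by antilinearity, spanning, and $L^\infty$-weak$^*$ density of trigonometric polynomials) $\CCC\M_\varphi\CCC=\M_{\overline{\varphi}}$, equivalently $\M_{\overline\varphi}\CCC=\CCC\M_\varphi$, for every $\varphi\in L^\infty(m)$. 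In particular, writing $\overline\varphi$ on the left: for real-valued $\varphi$ we get $\M_\varphi\CCC=\CCC\M_\varphi$, so the \emph{linear} operator $\CCC^2$ — which equals $I$ — trivially commutes with the multiplication algebra, while $\CCC$ itself intertwines $\M_\varphi$ with $\M_{\overline\varphi}$. Combining $\CCC$ with the ``flat'' conjugation $\J$ from \eqref{e1.6} (any fixed one), the operator $\CCC\J$ is a bounded \emph{linear} operator on $\mathscr{L}^2(m,\m)$ that, using $\J\M_\varphi\J=\M_{\overline\varphi}$, satisfies $(\CCC\J)\M_\varphi=\M_\varphi(\CCC\J)$ for all $\varphi\in L^\infty(m)$; that is, $\CCC\J$ commutes with the multiplication algebra $\{\M_\varphi:\varphi\in L^\infty(m)\}$. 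By the standard theory of decomposable operators (the commutant of the diagonal algebra $L^\infty(m)$ acting on a direct integral is exactly the algebra of decomposable operators — see Dixmier, or the von Neumann direct-integral theorem), $\CCC\J=\M_{\U}$ for some $\U\in\mathscr{L}^\infty(m,\mathcal{B}(\m))$. Hence $\CCC=\M_\U\J$, and setting $\CC(\xi):=\U(\xi)J$ we get $\CCC=\A_\CC$ with $\CC(\xi)\in\mathscr{A}\!\mathcal{B}(\m)$ pointwise. It remains to check $\CC(\xi)$ is a conjugation a.e.: isometry of $\CCC$ forces $\|\U(\xi)\eta\|=\|\eta\|$ a.e.\ for each fixed $\eta$ in a countable dense set (so $\U(\xi)$ is an isometry a.e., in fact unitary since $\CCC$ is onto), and $\CCC^2=I$ together with $\J^2=I$ forces, after another pointwise-a.e.\ argument using a countable dense set, $\M_\U\J\M_\U\J=I$, i.e.\ $\U(\xi)J\U(\xi)J=I$ a.e., i.e.\ $\CC(\xi)^2=I$ a.e. Thus $\CC(\xi)$ is a conjugation for a.e.\ $\xi$, proving (b).

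The main obstacle, and the step to treat carefully, is the direct-integral/measurable-selection machinery in $(a)\Rightarrow(b)$: identifying the commutant of the scalar multiplication algebra on $\mathscr{L}^2(m,\m)=L^2(m)\otimes\m$ with the algebra of $\mathscr{L}^\infty(m,\mathcal{B}(\m))$-multiplication operators, and then passing from ``$\M_\U\J=\CCC$ and the global identities $\CCC^2=I$, $\|\CCC\f\|=\|\f\|$'' to the \emph{pointwise a.e.}\ statements ``$\U(\xi)$ unitary and $J$-symmetric, $\CC(\xi)^2=I$.'' Both require the separability of $\m$ (to run countable-dense-set arguments avoiding null sets) and care that the exceptional null sets can be chosen uniformly. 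Everything else — the commutation relations, the role of $\J$, the Propositions invoked — is routine given Proposition \ref{p3.4} and Lemma \ref{csu}. I would present the forward direction via this ``commute with $\J$ to linearize, then decompose'' route rather than attempting a direct construction of $\CC(\xi)$ from $\CCC$.
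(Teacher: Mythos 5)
Your proposal is correct and follows essentially the same route the paper takes: the paper quotes this theorem from \cite{MR4169409} without reproving it, but its proof of the more general Theorem \ref{th1.2} (of which Theorem \ref{new1} is the $\mathscr{L}^2(\mu,\h)$ special case) uses exactly your key step — compose $\CCC$ with the flat conjugation $\J$ to obtain a linear operator commuting with $\M_{\xi}$, invoke the commutant/decomposable-operator theorem to write $\CCC\J=\M_{\U}$, and then apply Proposition \ref{p3.4} to get unitarity and $J$-symmetry of $\U(\xi)$ almost everywhere. The remaining implications are handled in the same routine way, so no substantive differences to report.
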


This yields a description of $\mathscr{C}_s(U)$ when $U$ is a bilateral shift. Recall the unitary operator $W: \h \to L^2(m, \mathcal{M})$ from \eqref{WWWWW} associated with a bilateral shift with wandering subspace $\mathcal{M}$. 

\begin{Corollary}\label{cococlclcucuc}
Let $U$ be a bilateral shift on $\h$ with an associated wandering subspace $\mathcal{M}$. For a conjugation $C$ on $\h$, the following are equivalent:
\begin{enumerate}
\item $C \in \mathscr{C}_s(U)$;
\item $\CCC = WC W^{*}$ is a conjugation on $\mathscr{L}^2(m, \mathcal{M})$ that satisfies any of the equivalent conditions of Theorem \ref{jjHHbbhGHJK}.
\end{enumerate}
\end{Corollary}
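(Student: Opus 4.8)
The plan is to establish the equivalence in Corollary \ref{cococlclcucuc} by a direct transfer argument, using the fact that $W$ is a unitary operator intertwining $U$ and $\M_\xi$, together with the already-proved Proposition \ref{bnjiovbhucfgyu} and Lemma \ref{lem1.2}. Since $W U W^{*} = \M_\xi$ on $\mathscr{L}^2(m,\mathcal{M})$, Proposition \ref{bnjiovbhucfgyu} applies with the roles $V = W$, and the source and target unitaries being $U$ and $\M_\xi$ respectively; it gives $W \mathscr{C}_s(U) W^{*} = \mathscr{C}_s(\M_\xi)$.

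First I would unwind the definitions: $C \in \mathscr{C}_s(U)$ means $C$ is a conjugation on $\h$ with $C U C = U^{*}$. By Lemma \ref{lem1.2}, $\CCC := W C W^{*}$ is a conjugation on $\mathscr{L}^2(m,\mathcal{M})$, and since $W$ is unitary the map $C \mapsto W C W^{*}$ is a bijection between conjugations on $\h$ and conjugations on $\mathscr{L}^2(m,\mathcal{M})$, with inverse $\CCC \mapsto W^{*} \CCC W$. So the only thing to check is that, under this bijection, the condition $C U C = U^{*}$ corresponds exactly to $\CCC \M_\xi \CCC = \M_{\bar\xi} = \M_\xi^{*}$. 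This is immediate: if $C U C = U^{*}$ then
\begin{align*}
\CCC \M_\xi \CCC = (W C W^{*})(W U W^{*})(W C W^{*}) = W (C U C) W^{*} = W U^{*} W^{*} = \M_\xi^{*},
\end{align*}
and conversely, applying $W^{*}(\cdot)W$ to $\CCC \M_\xi \CCC = \M_\xi^{*}$ recovers $C U C = U^{*}$ by the same computation run backwards. Hence $C \in \mathscr{C}_s(U)$ if and only if $\CCC = W C W^{*} \in \mathscr{C}_s(\M_\xi)$, which by Theorem \ref{jjHHbbhGHJK} holds if and only if $\CCC$ satisfies any one of the listed equivalent conditions there. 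This is precisely the asserted equivalence (a) $\iff$ (b).

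There is no serious obstacle here; the corollary is essentially a dictionary translation of Theorem \ref{jjHHbbhGHJK} through the unitary $W$, and the only mild point to be careful about is that one really does have a genuine biconditional — that every conjugation $\CCC$ on $\mathscr{L}^2(m,\mathcal{M})$ arises as $W C W^{*}$ for a (unique) conjugation $C$ on $\h$, so that condition (b) is not vacuously restricting attention to a special subclass. That follows from $W$ being onto and Lemma \ref{lem1.2} applied to $W^{*}$. One can also simply invoke Proposition \ref{bnjiovbhucfgyu} directly to get $C \in \mathscr{C}_s(U) \iff WCW^{*} \in \mathscr{C}_s(\M_\xi)$ in one line, and then quote Theorem \ref{jjHHbbhGHJK} for the reformulation of membership in $\mathscr{C}_s(\M_\xi)$; I would present it this way for brevity.
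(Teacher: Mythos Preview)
Your proposal is correct and matches the paper's approach: the paper does not give a separate formal proof of this corollary, but the sentence preceding it already records that $WUW^{*}=\M_\xi$ and that Lemma~\ref{lem1.2} makes $\CCC=WCW^{*}$ a conjugation with $\CCC\M_\xi\CCC=\M_{\bar\xi}$, which together with Theorem~\ref{jjHHbbhGHJK} is exactly your argument. Invoking Proposition~\ref{bnjiovbhucfgyu} for the clean biconditional, as you suggest at the end, is the tidiest way to write it.
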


\begin{Example}\label{00o0iJjjIjiiJExampewl666}
For the bilateral shift $M_{\xi}$ on $L^2 = L^2(m, \T)$, we can examine the set $\mathscr{C}_s(M_{\xi})$. Here $\mathcal{M} = \C$ (the constant functions) and $J: \C \to \C$ is $J z = \overline{z}$ and so $\J: L^2 \to L^2$ is $\J f = \overline{f}$.  If $u \in L^{\infty}$ is unimodular  on $\T$, then $M_u f = u f$ is unitary and $\J M_u \J = M_{\bar{u}}$. Theorem \ref{jjHHbbhGHJK} says that any $C \in \mathscr{C}_s(M_{\xi})$  must take the form $C = M_u \J$ for some unimodular $u \in L^{\infty}$. We will see another path to this result in Example \ref{nnnnxnnnxxx2235567796316}.
\end{Example}

\begin{Example}
In a similar way as with the previous example, let  $\mathscr{B} = \{\vec{v}_k\}_{k \geq 1}$ be an orthonormal basis for $\mathcal{M}$. Here $N = \operatorname{dim} \mathcal{M}$ (which might be infinite). Recall the  conjugation $J_{\mathscr{B}}$ on $\mathcal{M}$ defined by $J_{\mathscr{B}} \vec{v}_{k} = \vec{v}_k$ for all $k \geq 1$ (see \eqref{JJJBBBB}).
 Suppose $\U \in \mathscr{L}^{\infty}(m, \mathcal{B}(\mathcal{M}))$ such that $\U(\xi)$ is unitary for almost every $\xi \in \T$. One can check that $[J \U(\xi) J]_{\mathscr{B}} = \overline{[\U(\xi)]_{\mathscr{B}}}$ for almost every $\xi \in \T$. Thus, for $J \U(\xi) J = \U(\xi)^{*}$ it must be the case that $[\U(\xi)^{*}]_{\mathscr{B}} = \overline{[\U(\xi)]_{\mathscr{B}}}$. This means that every conjugation $\CCC$ on $\mathscr{L}^2(m, \mathcal{M})$ for which $\CCC \M_{\xi} \CCC = \M_{\bar{\xi}}$ must take the form
 $(\CCC\f)(\xi) = \U(\xi) J(\f(\xi))$, where $[\U(\xi)^{*}]_{\mathscr{B}} = \overline{[\U(\xi)]_{\mathscr{B}}}$ for almost every $\xi \in \T$. Note that for {\em any} conjugation $J$ on $\mathcal{M}$, we can find an orthonormal basis $\mathscr{B}$ for $\mathcal{M}$ for which $J  = J_{\mathscr{B}}$ \cite[Lemma 1]{MR2187654} . Thus, in a way, the example above is canonical.
\end{Example}

 The results from \cite[Theorem 3.3]{JMMPWR_OT28} show that for any unitary operator  $U$ on $\h$ we have
 $\h=\mathcal{K}\oplus \mathcal{K}^\prime,$
 where $\mathcal{K}$ and $\mathcal{K}'$ reducing subspaces for $U$,
  where $U|_{\mathcal{K}}$ is a bilateral shift and $U|_{\mathcal{K}^\prime}$ has no a bilateral shift part.
For a given conjugation $C$, even assuming that $U$ is $C$-symmetric, the conjugation $C$ cannot be nesseserily decomposed according to this decomposition.
\begin{Example}Consider the same operator as in \cite[Example 5.6]{JMMPWR_OT28}. Namely, let $\h=L^2(\Omega_1)\oplus L^2(\Omega_2)\oplus L^2(\Omega_1)$, where $\Omega_1$, $\Omega_2$ are measurable disjoint subsets of $\mathbb{T}$ such that  $\Omega_1 \cup\Omega_2=\mathbb{T}$
 and
 \[U(f,g,h)(z)=(\xi f(\xi), \xi g(\xi), \xi h(\xi))\quad\text{for}\quad f,h\in L^2(\Omega_1),g\in L^2(\Omega_2). \]
Define the conjugation 
$$C(f,g,h)(\xi)=(\overline{h(\xi)},\overline{g(\xi)},\overline{f(\xi)})\; \; \text{for}\; \;  f,h\in L^2(\Omega_1),g\in L^2(\Omega_2)$$ and observe that $CUC=U^*$. As in \cite[Example 5.6]{JMMPWR_OT28}, the decomposition $\h=\calk\oplus L^2(\Omega_1)$ with $\calk=L^2=L^2(\Omega_1)\oplus L^2(\Omega_2)$, is  a desired  bilateral shift decomposition, but it is not $C$ invariant ($C\calk\nsubseteq \calk$).
\end{Example}

\section{Unitary multiplication operators on $L^2(m, \T)$}\label{S7}

As mentioned in Example \ref{ljdgflg9osa}(c), we have a model for any bilateral shift $U$ on $\h$ as the multiplication operator $M_{\psi}$ on $L^2 = L^2(m, \T)$, where $\psi$ is an inner function whose degree is that of the (uniquely defined) dimension of any wandering subspace for $U$. In this section we give a concrete and tangible description of $\mathscr{C}_s(M_{\psi})$. If $J$ is the conjugation $J f = \overline{f}$ on $L^2$ and $C \in \mathscr{C}_s(M_{\psi})$, then $A = C J$ is a unitary operator on $L^2$ for which $A M_{\psi} = M_{\psi} A$. Thus, in order to describe $\mathscr{C}_{s}(M_{\psi})$, we first need to characterize the bounded operators on $L^2$ that commute with  $M_{\psi}$.


\begin{Remark}\label{s7s7s7s7s7s7s7}
For an inner function $\psi$, a known result (see for example \cite[Proposition 5.17]{JMMPWR_OT28}), says that 
\begin{equation}\label{l2sum}L^2 = \bigoplus _{n = -\infty}^{\infty} \psi^{n} \K_{\psi},\end{equation}
where $\K_{\psi} = H^2 \ominus \psi H^2$ is the model space associated with $\psi$. In other words, $\K_{\psi}$ is a wandering subspace for $M_{\psi}$ (recall Definition \ref{lafdhgjdfpsgszhnigbv888}). Let us set up some notation to be used below. Let $N = \operatorname{dim} \K_{\psi} \in \N \cup \{\infty\}$. Observe that $N$ is finite if and only if $\psi$ is a finite Blaschke product with $N$ zeros, repeated according to multiplicity \cite[Prop.~5.19]{MR3526203}. Also define the space 
$$\bigoplus_{1 \leq j \leq N} L^2 = L^2 \oplus L^2 \oplus  \cdots \oplus L^2.$$
The norm of an element $\f = [f_{j}]_{1 \leq j \leq N}^{t} \in \bigoplus_{1 \leq j \leq N} L^2$ is 
$$\|\f\| = \Big(\sum_{1 \leq j \leq N} \|f_j\|^{2}_{L^2}\Big)^{1/2}.$$
When $N = \infty$, we need to assume that the sum defining $\|\f\|$ is finite. 
Furthermore, the operator $\bigoplus_{1 \leq j \leq N} M_{\xi}$ (called the {\em inflation} of the bilateral shift $M_{\xi}$ on $L^2$) is given by 
$$\Big(\bigoplus_{1 \leq j \leq N} M_{\xi}\Big) \f(\xi) = \xi \f(\xi) = [\xi f_{j}(\xi)]^{t}_{1 \leq j \leq N}.$$
We also define 
$$\ell^{2}_{N} := \Big\{\vec{x} = [x_j]^{t}_{1 \leq j \leq N}, x_j \in \C: \|\vec{x}\|_{\ell^{2}_{N}} = \Big(\sum_{1 \leq j \leq N} |x_j|^2\Big)^{1/2} < \infty \Big\}.$$ When $N = \infty$, note that $\ell^{2}_{N}$ is equal to $\ell^{2}_{+}$ which was the space discussed earlier in \eqref{zoozZZZZellllll}. Finally, observe that 
\begin{equation}\label{kkk*8*ss}
\bigoplus_{1 \leq j \leq N} M_{\xi} \cong \M_{\xi}|_{\mathscr{L}^2(m, \ell^{2}_{N})}
\end{equation}.
\end{Remark}

{
\begin{Theorem}\label{BbnMbBNMmmNB}
For an inner function $\psi$, let 
 $\{h_j\}_{1 \leq j \leq N}$ be an orthonormal basis for the model space $\K_{\psi}$. Then we have the following. 
 \begin{enumerate}
   \item Every $f \in L^2$ has the unique decomposition
\begin{equation}\label{forforf} f = \sum_{1 \leq j \leq N} h_j \cdot (f_j \circ \psi),\end{equation}
where each $f_j$ belongs to $L^2$ and
$\|f\| = (\sum_{1 \leq j \leq N} \|f_j\|^2)^{\frac{1}{2}}$.
   \item The operator
$$W: L^2 \to \bigoplus_{1 \leq j \leq N} L^2, \quad W f = [f_1 \; f_2\;  f_3\; \ldots]^{t},$$ is unitary and
$W^{*} [k_j]^{t}_{1 \leq j \leq N} = \sum_{1 \leq j \leq N}h_j \cdot (k_j \circ \psi).$
   \item $W M_{\psi} W^{*} = \bigoplus_{1 \leq j \leq N} M_{\xi}$ and $W M_{\overline\psi} W^{*} = \bigoplus_{1 \leq j \leq N} M_{\bar\xi}$.
   \item If $A \in \mathcal{B}(L^2)$ satisfies  $A M_{\psi} = M_{\psi} A$, then
$$A f = \sum_{1 \leq j \leq N} (f_j \circ \psi) \sum_{1 \leq k \leq N} h_k \cdot (\phi_{k j} \circ \psi),$$
where $\Phi = [\phi_{i j}]_{1 \leq i, j \leq N}\in\mathscr{L}^{\infty}(m,\mathcal{B}(\ell^2_N))$. 
\item If $\Phi = [\phi_{i j}]_{1 \leq i, j \leq N}\in\mathscr{L}^{\infty}(m,\mathcal{B}(\ell^2_N))$, then the operator $A$ defined in (d) is unitarily equivalent to $M_{\Phi}$ on $\bigoplus_{1 \leq j \leq N} L^2$, i.e., 
$$[f_j]^{t}_{1 \leq j \leq N} \mapsto \Phi [f_j]^{t}_{1 \leq j \leq N}.$$
 \end{enumerate}
\end{Theorem}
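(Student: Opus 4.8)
The plan is to build the whole statement on top of the orthogonal decomposition \eqref{l2sum}, namely $L^2 = \bigoplus_{n \in \Z} \psi^n \K_\psi$, which expresses that $\K_\psi$ is a wandering subspace for $M_\psi$ and is exactly what makes $M_\psi$ a bilateral shift. First I would prove (a): fix the orthonormal basis $\{h_j\}_{1\le j\le N}$ of $\K_\psi$; then $\{\psi^n h_j\}_{n\in\Z,\,1\le j\le N}$ is an orthonormal basis of $L^2$, so every $f\in L^2$ is uniquely $f = \sum_{n,j} c_{n,j}\,\psi^n h_j = \sum_j h_j \big(\sum_n c_{n,j}\xi^n\big)$; setting $f_j(\xi) := \sum_n c_{n,j}\xi^n \in L^2$ and recognizing that $\sum_n c_{n,j}\xi^n$ evaluated along $\psi$ gives $f_j\circ\psi$ yields \eqref{forforf}, and Parseval gives $\|f\|^2 = \sum_{n,j}|c_{n,j}|^2 = \sum_j\|f_j\|^2$. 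The one point that needs care here is making precise the identity ``$\sum_n c_{n,j}\xi^n$ composed with $\psi$ equals $f_j\circ\psi$'' as an element of $L^2$; this is where one uses that $\psi$ is inner, so that $g\mapsto g\circ\psi$ is an isometry of $L^2(m,\T)$ into itself (indeed, composition with an inner function preserves $m$, essentially because $\int (\xi^n)\circ\psi\,\overline{(\xi^k)\circ\psi}\,dm = \delta_{nk}$ as $\psi^n$ are orthonormal). Part (b) is then immediate: uniqueness of the decomposition makes $W$ well defined and linear, the norm identity in (a) makes it isometric, surjectivity follows by running the decomposition backwards, and the formula for $W^*$ is read off directly.

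Next, for (c), I would compute $W M_\psi W^* [k_j]^t$: since $W^*[k_j]^t = \sum_j h_j\cdot(k_j\circ\psi)$, multiplying by $\psi$ gives $\sum_j h_j\cdot(\psi\cdot(k_j\circ\psi)) = \sum_j h_j\cdot((\xi k_j)\circ\psi)$, so $W M_\psi W^*[k_j]^t = [\xi k_j]^t = \big(\bigoplus_j M_\xi\big)[k_j]^t$; the statement for $M_{\overline\psi}$ is identical with $\overline\xi$ in place of $\xi$ (or follows by taking adjoints). For (d), given $A\in\mathcal B(L^2)$ with $AM_\psi = M_\psi A$, transport it to $\widetilde A := W A W^*$ on $\bigoplus_{1\le j\le N} L^2$; by (c) this $\widetilde A$ commutes with $\bigoplus_j M_\xi$, which by \eqref{kkk*8*ss} is unitarily equivalent to $\M_\xi$ on $\mathscr L^2(m,\ell^2_N)$. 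The commutant of the inflated bilateral shift $\M_\xi$ on a vector-valued $L^2$ is the algebra of multiplication operators $\M_\Phi$ with $\Phi\in\mathscr L^\infty(m,\mathcal B(\ell^2_N))$ --- this is the standard Fuglede/Wiener-type commutant theorem for $M_\xi\otimes I$ --- so $\widetilde A = \M_\Phi$ for some such $\Phi = [\phi_{ij}]$. Writing out $A = W^*\M_\Phi W$ on a general $f = \sum_j h_j\cdot(f_j\circ\psi)$ gives $Af = W^*\big[\sum_j \phi_{kj} f_j\big]^t_k = \sum_k h_k\cdot\big(\big(\sum_j \phi_{kj}f_j\big)\circ\psi\big) = \sum_j (f_j\circ\psi)\sum_k h_k\cdot(\phi_{kj}\circ\psi)$, which (after re-checking that composition with $\psi$ distributes over the finite/convergent sums, again using that $g\mapsto g\circ\psi$ is an isometry) is exactly the asserted formula. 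Finally (e) is the converse bookkeeping: any $\Phi\in\mathscr L^\infty(m,\mathcal B(\ell^2_N))$ defines a bounded $\M_\Phi$, hence a bounded $A = W^*\M_\Phi W$ commuting with $M_\psi$, and $W$ realizes the unitary equivalence $A\cong\M_\Phi$ on $\bigoplus_{1\le j\le N}L^2$ by construction.

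The main obstacle is the commutant computation invoked in (d): one must identify $\{\,\bigoplus_j M_\xi\,\}' $ with $\mathscr L^\infty(m,\mathcal B(\ell^2_N))$ acting as $\M_\Phi$, and in the case $N=\infty$ one should be a little careful that the matrix $\Phi(\xi) = [\phi_{ij}(\xi)]$ really represents a bounded operator on $\ell^2_N$ for $m$-a.e.\ $\xi$ with $\operatorname{ess\,sup}_\xi\|\Phi(\xi)\|<\infty$, rather than merely having $L^\infty$ entries. I would handle this by citing the classical description of the commutant of a (scalar) bilateral shift tensored with the identity --- the commutant of $\M_\xi$ on $\mathscr L^2(m,\h)$ is $\M_{\mathscr L^\infty(m,\mathcal B(\h))}$ --- which is exactly the $\h=\ell^2_N$ instance of the framework already set up in \S\ref{section3} around \eqref{e1.4}; alternatively one can derive it directly by testing the commutation relation on the basis vectors $e_i\cdot\xi^n$ and using that an operator commuting with $M_\xi$ is determined by a bounded measurable symbol. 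Everything else is routine isometry bookkeeping, with the recurring (but elementary) point that composition with the inner function $\psi$ is an $L^2$-isometry, which is what glues the scalar decomposition \eqref{forforf} to the vector-valued picture.
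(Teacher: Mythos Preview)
Your proposal is correct and follows essentially the same route as the paper: both arguments start from the wandering-subspace decomposition \eqref{l2sum} to obtain (a) and (b) via the orthonormal basis $\{\psi^n h_j\}$ and Parseval, verify (c) by the same direct computation on $W^*[k_j]^t$, and deduce (d)--(e) by transporting $A$ through $W$ and invoking the standard description of the commutant of the inflated shift $\bigoplus_j M_\xi$ as multiplication by $\Phi\in\mathscr{L}^\infty(m,\mathcal{B}(\ell^2_N))$ (the paper cites \cite[Chapter III]{MR2003221} for this). If anything, you are slightly more explicit than the paper about why the formal identity $\sum_n c_{n,j}\psi^n = f_j\circ\psi$ is legitimate (via the isometry $g\mapsto g\circ\psi$), a point the paper leaves implicit.
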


\begin{proof}By \eqref{l2sum},
 every $f \in L^2$ can be written (uniquely) as
 \begin{equation}\label{xlcjvlxkcjv8888u99}
f = \sum_{j = -\infty}^{\infty} \psi^{j} \sum_{1 \leq k \leq N} a_{j k} h_k.
\end{equation}
Moreover, since $\psi^j K_{\psi} \perp \psi^{j'} K_{\psi}$ for all $j \not = j'$ and $\{h_k\}_{1 \leq k \leq N}$ forms an orthonormal basis for $\K_{\psi}$, we have
\begin{equation}\label{qppghhhffL}
\|f\|^2 = \sum_{j = -\infty}^{\infty} \sum_{1 \leq k \leq N} |a_{jk}|^2 < \infty.
\end{equation}
Rewrite the expression in \eqref{xlcjvlxkcjv8888u99} as
$$f = \sum_{1 \leq k \leq N} h_k \sum_{j = -\infty}^{\infty} a_{jk} \psi^{j}.$$
By \eqref{qppghhhffL} the function
$f_k = \sum_{j = -\infty}^{\infty} a_{jk} z^{j}$ belongs to $L^2$ and thus,
$$f = \sum_{1 \leq k \leq N} h_k \cdot (f_k \circ \psi).$$ Finally, by Parseval's theorem and \eqref{qppghhhffL}, note that
\begin{align*}
\sum_{1 \leq k \leq N} \|f_k\|^2 & = \sum_{1 \leq k \leq N}  \sum_{j = -\infty}^{\infty} |a_{jk}|^2  = \|f\|^2.
\end{align*}
This verifies statements (a) and (b).  To verify statement (c), note that 
\begin{align*}
W M_{\psi} W^{*} [k_j]^{t}_{1 \leq j \leq N} & = W M_{\psi} \sum_{1 \leq j \leq N} h_j \cdot (k_j \circ \psi)\\
& = W \sum_{1 \leq j \leq N} h_j \cdot  \psi \cdot (k_j \circ \psi)\\
& = W \sum_{1 \leq j \leq N} h_j \cdot ((\xi k_j) \circ \psi)\\
& = [\xi k_j(\xi)]^{t}_{1 \leq j \leq N}.
\end{align*}
This shows that $W M_{\psi} W^{*} = \bigoplus_{1 \leq j \leq N} M_{\xi}$. Similarly, one can verify the second equality in (c).

To prove (d) and (e),  recall from \cite[Chapter III]{MR2003221} that the bounded  operators on the space $\bigoplus_{1 \leq j \leq N} L^2$ that commute with $\bigoplus_{1 \leq j \leq N} M_{\xi}$ must take the form of multiplication by the matrix function $\Phi = [\phi_{ij}]_{1 \leq i, j \leq N} \in \mathscr{L}^{\infty}(m, \ell^2_N)$. 
Putting this all together, we see that if $A \in \mathcal{B}(L^2)$ commutes with $M_{\psi}$,  then $W A W^{*}$ commutes with $\bigoplus_{1 \leq j \leq N} M_{\xi}$ and so $W A W^{*} = \M_{\Phi},$
equivalently $A = W^{*} \M_{\Phi} W$.
This translates to an operator on $L^2$ by 
\begin{align*}
A f & = W^{*} \Phi W f\\
& = W^{*} [\phi_{ij} ]_{1 \leq i, j \leq N}[f_1 \;  f_2 \;  \ldots]^{t}\\
& = W^{*} \Big[\sum_{1 \leq j \leq N} \phi_{1j} f_j \; \;  \sum_{1 \leq j \leq N} \phi_{2j} f_j \; \; \sum_{1 \leq j \leq N} \phi_{3j} f_j \; \; \ldots\Big]^{t}\\
& = h_1\cdot \Big(\sum_{1 \leq j \leq N} \phi_{1j} f_j\Big)\circ \psi + h_2 \cdot \Big(\sum_{1 \leq j \leq N} \phi_{2j} f_j\Big) \circ \psi + \ldots\\
&= (f_{1} \circ \psi) \cdot  \Big(\sum_{1 \leq k \leq N} h_{k} \cdot (\phi_{k1} \circ \psi)\Big) + (f_{2} \circ \psi) \cdot  \Big(\sum_{1 \leq k \leq N} h_{k} \cdot  (\phi_{k2} \circ \psi)\Big) + \cdots,
\end{align*}
which completes the proof of (d) and (e).
\end{proof}

From our earlier discussion, we know that the standard conjugation  $J f = \bar{f}$  on $L^2$ induces the standard conjugation on $\J$ on $ \bigoplus_{1 \leq j \leq N} L^{2}$ by 
$${\J}\vec{F}= [ \bar f_1 \; \bar f_2 \;  \ldots]=\overline{\vec{F}}$$
Here is our description of $\mathscr{C}_{s}(M_{\psi})$ when $\psi$ is inner.

\begin{Theorem}\label{089foidjgigifghjhjh77766}
Suppose that $\psi$ is an  inner function and  $\{h_j\}_{1 \leq j \leq N}$ is an orthonormal basis for $\K_{\psi}$. Then  $C\in\mathscr{C}_s(M_\psi)$ if and only if there is a $\Phi = [\phi_{i j}]_{1 \leq i, j \leq N} \in \mathscr{L}^{\infty}(m, \ell^{2}_{N})$ such that 
\begin{enumerate}
\item $\Phi^{*} \Phi = I$ almost everywhere on $\T$;
\item $\Phi^{t} = \Phi$ almost everywhere on $\T$;
\item 
${\displaystyle C f = \sum_{1 \leq j \leq N} (\bar f_j \circ \psi) \sum_{1 \leq k \leq N} h_k  \cdot (\phi_{k, j} \circ \psi),}$
for all $f \in L^2$ with the decomposition from \eqref{forforf}.
\end{enumerate}
\end{Theorem}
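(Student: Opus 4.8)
The plan is to linearize the problem, push everything through the unitary $W$ of Theorem~\ref{BbnMbBNMmmNB} so that $M_\psi$ becomes the inflated shift $\bigoplus_{1\le j\le N}M_\xi$, and then read conditions (a)--(c) off of a matrix-multiplier computation. First I would set $A=CJ$, where $J$ is the conjugation $Jf=\bar f$ from the preamble. Then $A$ is a unitary operator on $L^2$ and $C=AJ$, and $C^2=I$ is equivalent to $JAJ=A^{*}$. Using $JM_\psi J=M_{\overline\psi}=M_\psi^{*}$ together with the fact that $M_\psi$ is unitary (so that an operator commutes with $M_{\overline\psi}$ precisely when it commutes with $M_\psi$), a short computation gives
$$C M_\psi C=M_\psi^{*}\iff AJM_\psi AJ=M_{\overline\psi}\iff AM_{\overline\psi}=M_{\overline\psi}A\iff AM_\psi=M_\psi A.$$
Hence $C\in\mathscr{C}_s(M_\psi)$ if and only if $A=CJ$ is a unitary operator on $L^2$ that commutes with $M_\psi$ and satisfies $JAJ=A^{*}$.

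Next I would transport this through $W$. By Theorem~\ref{BbnMbBNMmmNB}(c) we have $WM_\psi W^{*}=\bigoplus_{1\le j\le N}M_\xi$, and by the discussion immediately preceding the theorem $WJW^{*}=\J$, the coordinatewise conjugation $\vec F\mapsto\overline{\vec F}$. By Theorem~\ref{BbnMbBNMmmNB}(d)--(e), the bounded operators on $\bigoplus_{1\le j\le N}L^2$ commuting with $\bigoplus M_\xi$ are exactly the $\M_\Phi$ with $\Phi\in\mathscr{L}^{\infty}(m,\ell^2_N)$; thus $A$ commutes with $M_\psi$ if and only if $WAW^{*}=\M_\Phi$ for such a $\Phi$. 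Under this identification: $A$ is unitary iff $\Phi(\xi)$ is unitary for $m$-a.e.\ $\xi$, i.e.\ $\Phi^{*}\Phi=I$ a.e.\ (condition (a)); and since $\J$ is antilinear, a direct check gives $\J\M_\Phi\J=\M_{\overline\Phi}$ with $\overline\Phi(\xi)$ the entrywise conjugate matrix, so $JAJ=A^{*}$ becomes $\overline\Phi=\Phi^{*}$ a.e., which, on taking conjugates and using $\Phi^{*}=\overline{\Phi^{t}}$, is equivalent to $\Phi^{t}=\Phi$ a.e.\ (condition (b)). Finally $C=AJ=W^{*}\M_\Phi\J W$, and unwinding this with $Wf=[f_j]^{t}_{1\le j\le N}$ and $W^{*}[g_k]^{t}_{1\le j\le N}=\sum_k h_k\cdot(g_k\circ\psi)$ from Theorem~\ref{BbnMbBNMmmNB}(b) produces exactly the formula in (c). The converse direction just runs this chain of equivalences backwards: given $\Phi$ satisfying (a) and (b), the operator $C$ of (c) is $W^{*}\M_\Phi\J W$; (a) makes $C$ an antilinear isometry, (b) forces $C^2=W^{*}\M_\Phi\M_\Phi^{*}W=I$, and $\M_\Phi$ commuting with $\bigoplus M_\xi$ gives $CM_\psi C=M_\psi^{*}$.

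The algebraic equivalences are routine; the two steps needing genuine care are, first, tracking antilinearity in the identity $\J\M_\Phi\J=\M_{\overline\Phi}$ and then correctly turning $\overline\Phi=\Phi^{*}$ into the \emph{symmetry} $\Phi^{t}=\Phi$ rather than a self-adjointness condition, and second, the identification $WJW^{*}=\J$ relating the conjugation $f\mapsto\bar f$ on $L^2$ to the coordinatewise conjugation on $\bigoplus_{1\le j\le N}L^2$ — this is where the orthonormal basis $\{h_j\}$ of $\K_\psi$ enters and is the point that must be stated most carefully. As a cross-check one can instead use that $M_\psi$ is a bilateral shift with wandering subspace $\K_\psi$ (Example~\ref{ljdgflg9osa}(c)) and apply Corollary~\ref{cococlclcucuc} with Theorem~\ref{jjHHbbhGHJK}(c) for the coordinatewise conjugation on $\ell^2_N$: a unitary on $\ell^2_N$ that is symmetric with respect to that conjugation is precisely a unitary matrix equal to its own transpose, which again yields (a) and (b).
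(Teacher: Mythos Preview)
Your main line of argument has a genuine gap at the step $WJW^{*}=\J$. The sentence before the theorem is only \emph{defining} the coordinatewise conjugation $\J$ on $\bigoplus_{1\le j\le N}L^2$ (via the $\S\,5$ construction applied to the fiber $\ell^2_N$); it is not asserting that $W$ intertwines $Jf=\bar f$ on $L^2$ with $\J$. In fact this intertwining fails for essentially every nontrivial $\psi$. Take $\psi(z)=z^2$ with $h_1=1$, $h_2=z$. For $f=\xi$ one has $Wf=[0,\,1]^{t}$, so $\J Wf=[0,\,1]^{t}$; but $Jf=\bar\xi=\xi\cdot\bar\xi^{\,2}=h_2\cdot(\bar\xi\circ\psi)$, so $WJf=[0,\,\bar\xi]^{t}\neq[0,\,1]^{t}$. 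The point is that $\overline{h_j}$ does not lie in $\K_\psi$ (the natural conjugation on $\K_\psi$ is $g\mapsto\psi\bar\xi\bar g$, not $g\mapsto\bar g$), so complex conjugation on $L^2$ scrambles the $\psi^n\K_\psi$ decomposition. Consequently the equivalence chain you set up, in particular the identification of $WAW^{*}$ with a multiplier $\M_\Phi$ \emph{and} the translation of $JAJ=A^{*}$ into $\overline\Phi=\Phi^{*}$, breaks down: the operator $A=CJ$ does commute with $M_\psi$, but $WJW^{*}$ is not $\J$, so you cannot pass from $JAJ=A^{*}$ to the matrix condition $\Phi^{t}=\Phi$ in the way you describe.

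What does work is exactly your ``cross-check,'' which is the paper's proof: transport $C$ (not $A=CJ$) through $W$, so that $\widetilde C=WCW^{*}$ is a conjugation on $\bigoplus_{1\le j\le N}L^2\cong\mathscr{L}^2(m,\ell^2_N)$ with $\widetilde C\,\M_\xi\,\widetilde C=\M_{\bar\xi}$; then apply Theorem~\ref{jjHHbbhGHJK}(c) with the coordinatewise conjugation on $\ell^2_N$ to write $\widetilde C=\M_\Phi\J$ with $\Phi$ unitary-valued, and read off $\Phi^{t}=\Phi$ from $\widetilde C^{\,2}=I$ (equivalently, from $J$-symmetry of $\Phi(\xi)$). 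The formula in (c) is then obtained by unwinding $C=W^{*}\M_\Phi\J W$ exactly as you indicate. So the fix is to abandon the $A=CJ$ reduction on $L^2$ and work with $\widetilde C$ and the \emph{fiberwise} conjugation $\J$ from the start.
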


\begin{proof}
Suppose $C \in \mathscr{C}_{s}(M_{\psi})$. Note that $\widetilde{C}:=WCW^*$ defines a  conjugation on  $\bigoplus_{1 \leq j \leq N} L^2$. Since $C M_{\psi} C = M_{\overline{\psi}}$, we can use
Theorem \ref{BbnMbBNMmmNB}(c) to see that 
$$\widetilde{C}\Big( \bigoplus_{1 \leq j \leq N}M_{\xi} \Big)\widetilde{C} = \bigoplus_{1 \leq j \leq N}M_{\bar{\xi}}.$$
Since $\bigoplus_{1 \leq j \leq N}M_{\xi}$ on $\bigoplus_{1 \leq j \leq N}L^2$ is unitary equivalent to $\M_{\xi}$ on $\mathscr{L}^2(m,\ell^2_{N})$ (recall \eqref{kkk*8*ss}), Theorem \ref{jjHHbbhGHJK} yields (with the choice of conjugation $J$ on $\ell^2_{N}$ being $J \vec{x} = \overline{\vec{x}}$), a matrix-valued function  $\Phi = [\phi_{i j}]_{1 \leq i, j \leq N} \in \mathscr{L}^{\infty}(m, \ell^{2}_{N})$  that is unitary valued $m$-almost everywhere, i.e., $\Phi \Phi^{*} = \Phi^{*} \Phi = I$, and such that $\widetilde{C}=\M_\Phi \J$. Moreover, since $\widetilde{C}^2=I$ we see that
$$I = (\M_{\Phi} \J)(\M_{\Phi} \J) = \M_{\Phi} (\J \M_{\Phi} \J) = \M_{\Phi} \M_{\overline{\Phi}}$$ and so 
$\overline{\Phi} = \Phi^{*}$. Combine this with the previous identity to see that $\Phi^{t} = \Phi$. So far we have shown that if $C \in \mathscr{C}_{s}(M_{\psi})$ then $C = W^{*} (\M_{\Phi} \J )W$ where $\Phi$ satisfies the conditions of (a) and (b). If $\Phi$ satisfies the conditions (a) and (b) then a short argument will show that $\Phi$ is unitary valued almost everywhere and the condition (b) will show that $J \Phi J = \Phi^{*}$ almost everywhere. Proposition \ref{p3.4} now says that $\M_{\Phi} \J \in \mathscr{C}_{s}(\M_{\xi})$ and hence $W^{*} (\M_{\Phi} \J) W \in \mathscr{C}_{s}(M_{\psi})$. So we have shown that $C \in \mathscr{C}_{s}(M_{\psi})$ if and only if $C = W^{*} (\M_{\Phi} \J) W$, where $\Phi$ satisfies the conditions in (a) and (b). 

It remains to verify the formula in (c). Observe that for all $f = \sum_{1 \leq j \leq N} h_j  \cdot (f_j \circ \psi)\in L^2$, 
\begin{align*}
C f & = W^{*} \M_\Phi\J W f\\
& = W^{*} [\phi_{ij} ]_{1 \leq i, j \leq N}[\bar f_1 \;  \bar f_2 \; \ldots]^{t}\\
& = W^{*} \Big[\sum_{1 \leq j \leq N} \phi_{1j} \bar f_j \; \; \sum_{1 \leq j \leq N} \phi_{2j} \bar f_j \; \;  \sum_{1 \leq j \leq N} \phi_{3j} \bar f_j \;  \; \ldots\Big]^{t}\\
& = h_1\cdot \Big(\sum_{1 \leq j \leq N} \phi_{1j} \bar f_j\Big)\circ \psi + h_2 \cdot \Big(\sum_{1 \leq j \leq N} \phi_{2j} \bar f_j\Big) \circ \psi + \ldots\\
&= (\bar f_{1} \circ \psi) \cdot  \Big(\sum_{1 \leq k \leq N} h_{k} \cdot (\phi_{k1} \circ \psi)\Big) + (\bar f_{2} \circ \psi) \cdot  \Big(\sum_{1 \leq k \leq N} h_{k}  \cdot (\phi_{k2} \circ \psi)\Big) + \cdots.
\end{align*}

Conversely, suppose there is a $\Phi = [\phi_{i j}]_{1 \leq i, j \leq N} \in \mathscr{L}^{\infty}(m, \ell^{2}_{N})$ such that conditions (a), (b), and (c) hold. Theorem \ref{BbnMbBNMmmNB}(e) and the argument above shows that $C \in \mathscr{C}_{s}(M_{\psi})$. 
\end{proof}

\begin{Remark}
The description of $\mathscr{C}_{s}(M_{\psi})$ for an inner function $\psi$ depends on knowing an orthonormal basis for the model space $\K_{\psi}$. There are several ``natural'' bases one could choose. See \cite[Prop.~5.25]{MR3526203} for some particular examples when $\operatorname{dim} \K_{\psi}$ is finite. 
\end{Remark}

\begin{Example}\label{nnnnxnnnxxx2235567796316}
Suppose $\psi(z) = z$. In this case, $M_{\psi}$ becomes {\em the} bilateral shift $M_{\xi}$ on $L^2$. Furthermore, $\K_{\psi} = \C$ (the constant functions) and the expansion of an $f \in L^2$ from Theorem \ref{BbnMbBNMmmNB} becomes the classical Fourier expansion. Finally, Theorem \ref{BbnMbBNMmmNB} says that every $C \in \mathscr{C}_{s}(M_{\xi})$ must take the form $(C f)(\xi) = u(\xi) \overline{f(\xi)}$ for some $u \in L^{\infty}$ that is unimodular almost everywhere. We observed this by a different method in Example \ref{00o0iJjjIjiiJExampewl666}.
\end{Example}

\begin{Example}
When $\psi(z) = z^2$ above, one can check that the functions $h_{1}(z)  \equiv 1$ and $h_{2}(z) = z$ form an orthonormal basis for  $\K_{\psi}$. Furthermore, using the notation from Theorem \ref{BbnMbBNMmmNB},
\begin{equation}\label{a9re8fugodslkfbsadfssssSDFFGFG}
f_1(\xi) = \sum_{j = -\infty}^{\infty} \widehat{f}(2 j) \xi^j \quad \mbox{and} \quad f_{2}(\xi) = \sum_{j = -\infty}^{\infty} \widehat{f}(2 j + 1) \xi^j.
\end{equation}
 Then, one can check that
$$f(\xi) = h_{1}(\xi) f_{1}(\xi^2) + h_{2}(\xi) f_{2}(\xi^2) = f_{1}(\xi^2) + \xi f_{2}(\xi^2).$$
Theorem \ref{089foidjgigifghjhjh77766} says that every  $C \in \mathscr{C}_s(M_{\xi^2})$ takes the form
$$(C f)(\xi) = \overline{ f_{1}(\xi^2)} (\phi_{11}(\xi^2) + \xi \phi_{21}(\xi^2)) + \overline{ f_{2}(\xi^2)} (\phi_{21}(\xi^2) + \xi \phi_{22}(\xi^2)),$$
where $\phi_{ij}$, $1 \leq i, j \leq 2$, are bounded measurable functions on $\T$ for which $\phi_{12}(\xi) = \phi_{21}(\xi)$ and 
\begin{equation}\label{ex87} \begin{bmatrix}
\overline{ \phi_{11}(\xi)} & \overline{ \phi_{21}(\xi)}\\
\overline{ \phi_{21}(\xi)} & \overline{ \phi_{22}(\xi)}
\end{bmatrix} \begin{bmatrix}
\phi_{11}(\xi) & \phi_{21}(\xi)\\
\phi_{21}(\xi) & \phi_{22}(\xi)
\end{bmatrix}=
\begin{bmatrix}
1 & 0\\
0 & 1
\end{bmatrix}\end{equation}
for almost every $\xi \in \T$. The condition from \eqref{ex87} is equivalent to the identities 
$$|\phi_{11}(\xi)|^2+|\phi_{21}(\xi)|^2= 1,$$
$$ |\phi_{11}(\xi)|=|\phi_{22}(\xi)|,$$
$$\overline{ \phi_{11}(\xi)}\phi_{21}(\xi)+\overline{\phi_{21}(\xi)}\phi_{22}(\xi)=0$$
for almost every $\xi \in \T$.

To make this more tangible, let  $t=\operatorname{Arg}(\xi)\in (-\pi,\pi]$ and let $s(t),\alpha(t)$, $\beta(t)$, and $\gamma(t)$ be any $2\pi$--periodic  bounded real-valued (Lebesgue) measurable functions. Set 
$$
  \phi_{11}(\xi)=e^{i\alpha(t)}s(t),
  $$ 
  $$ \phi_{22}(\xi)=e^{i\beta(t)}s(t),$$
  $$\phi_{21}(\xi) = \phi_{12}(\xi)=e^{i\gamma(t)}\sqrt{1-s^2(t)},$$
and observe that the conditions above yield 
$$\gamma(t)=\tfrac{1}{2}(\pi+\alpha(t)+\beta(t)),$$
which shows that 
$$0 \leq s(t) \leq 1,$$
$$
  \phi_{11}(\xi)=e^{i\alpha(t)}s(t),$$
  $$\phi_{22}(\xi)=e^{i\beta(t)}s(t),$$
  $$\phi_{21}(\xi) = \phi_{12}(\xi) =ie^{\tfrac{i}{2}(\alpha(t)+\beta(t))}\sqrt{1-s^2(t)}.$$
Using the notation from \eqref{a9re8fugodslkfbsadfssssSDFFGFG}, this means that every $C \in \mathscr{C}_s(M_{\xi^2})$ must take  the form
\begin{multline*}
(C f)(\xi) = \overline{ f_{1}(\xi^2)}\Big( e^{i\alpha(2t)}s(2t) + i\xi e^{\tfrac{i}{2}(\alpha(2t)+\beta(2t))}\sqrt{1-s^2(2t)}\Big)\\ + \overline{ f_{2}(\xi^2)}\Big( i\,e^{\tfrac{i}{2}(\alpha(2t)+\beta(2t))}\sqrt{1-s^2(2t)} + \xi e^{i\beta(2t)}s(2t)\Big),\end{multline*}
where $t=\operatorname{Arg}(\xi)\in (-\pi,\pi]$ and $s(t),\alpha(t),\beta(t)$ are any $2\pi$--periodic bounded real-valued  measurable functions.

As a specific nontrivial example we can have the interesting $C \in \mathscr{C}_{s}(M_{\xi^2})$ defined by 
$$(C f)(\xi) = \overline{ f_{1}(\xi^2)} \big(\sin(2t) + \xi \cos(2t)\big) + \overline{ f_{2}(\xi^2)}\big(\cos(2t) - \xi \sin(2t)\big),$$
where $t=\operatorname{Arg}(\xi)$ ($s(t)  = \sin t$, $\alpha(t) \equiv 0$, $\beta(t) \equiv -\pi$)

Another interesting example comes from setting $s(t) \equiv s$, $\alpha(t) = \lambda t$, $\beta(t) = -\pi - \lambda t$, $\lambda \in \R$, which yields 
$$(C f)(\xi) = \overline{f_1(\xi^2)} ( s e^{i \lambda t} + \xi \sqrt{1 - s^2} ) + \overline{f_{2}(\xi)} (\sqrt{1 - s^2} + i  \xi s e^{-i \lambda t}).$$
\end{Example}




\section{Conjugations via the spectral theorem}\label{ST}
This section describes $\mathscr{C}_s(U)$ using the  multiplicity version of the spectral theorem. Important applications of this are Theorem  \ref{new3} and Theorem \ref{LFDKJgnsl;gdhf} below which connect the invariant subspaces of $C \in \mathscr{C}_{s}(U)$ with the hyperinvariant subspaces of $U$.

 We begin with the following (multiplicity) version of the spectral theorem for unitary operators from \cite[Ch. IX, Theorem 10.20]{ConwayFA}. The reader might need a refresher of the notation from \S \ref{section3}.

\begin{Theorem}[Spectral Theorem]\label{spectraltheorem}
For a unitary operator $U$ on a separable Hilbert space $\h$, there are mutually singular measures
$\mu_{\infty}, \mu_1, \mu_2, \ldots \in M_{+}(\T)$, along with Hilbert spaces $\h_{\infty}, \h_{1}, \h_2, \ldots$ each with corresponding  $\operatorname{dim} \h_{k} = k$, $k = \infty, 1, 2, 3, \dots$, and  an isometric  isomorphism
$$\mathcal{I}: \h \to  \mathscr{L}^2(\mu_\infty,\h_\infty)\oplus \mathscr{L}^2(\mu_1,\h_1)\oplus  \mathscr{L}^2(\mu_2,\h_2)\oplus \cdots$$ such that
$\mathcal{I}U \mathcal{I}^{*} $ is equal to the unitary operator 
\begin{equation}\label{MMMMmmmmm}
\M_{\xi}^{(\infty)} \oplus \M_{\xi}^{(1)} \oplus \M_{\xi}^{(2)} \oplus \cdots,
\end{equation}
where for $i = \infty, 1, 2, 3, \ldots$,
$$\M_{\xi}^{(i)}: \mathscr{L}^2(\mu_{i}, \h_{i}) \to  \mathscr{L}^2(\mu_{i}, \h_{i}), \quad \M_{\xi}^{(i)} \f(\xi) = \xi \f(\xi).$$
\end{Theorem}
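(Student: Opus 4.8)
The statement is the classical multiplicity form of the spectral theorem, and the plan is to assemble it in two stages: first a $\ast$-cyclic decomposition of $\h$, then a regrouping of the cyclic summands according to multiplicity.

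\emph{Stage 1: cyclic decomposition.} I would begin by using separability of $\h$: a routine exhaustion argument --- repeatedly extracting $[\vec{x}]_{U,U^{*}}$ for a unit vector $\vec{x}$ orthogonal to the part collected so far --- produces an at most countable family $\{\vec{x}_n\}$ with $\h = \bigoplus_n [\vec{x}_n]_{U,U^{*}}$, each summand reducing $U$. On a single cyclic piece the Laurent polynomials $p(U)\vec{x}_n$ are dense and $\|p(U)\vec{x}_n\|^2 = \int_{\T}|p|^2\,d\mu_{\vec{x}_n}$, where $\mu_{\vec{x}_n}$ is the elementary measure of \eqref{67uyh87877YUYUY}; hence $p(U)\vec{x}_n \mapsto p$ extends to a unitary $[\vec{x}_n]_{U,U^{*}} \to L^2(\mu_{\vec{x}_n},\T)$ intertwining $U|_{[\vec{x}_n]_{U,U^{*}}}$ with $M_\xi$. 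So $U$ is unitarily equivalent to $\bigoplus_n M_\xi$ on $\bigoplus_n L^2(\nu_n,\T)$ for suitable $\nu_n \in M_{+}(\T)$.

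\emph{Stage 2: regrouping by multiplicity.} Next I would pass to the canonical form. Set $\mu := \sum_n 2^{-n}\nu_n/\|\nu_n\|$, a finite measure with $\nu_n \ll \mu$ for all $n$; writing $\nu_n = w_n\,\mu$ with $w_n \geq 0$ Borel and $S_n := \{w_n > 0\}$, the map $f \mapsto \sqrt{w_n}\,f$ identifies $L^2(\nu_n,\T)$ with $L^2(\mu|_{S_n},\T)$ and intertwines the corresponding copies of $M_\xi$. Consider the multiplicity function $\zeta \mapsto \#\{n : \zeta \in S_n\}$, which takes values in $\{0,1,2,\dots\}\cup\{\infty\}$; its level sets $A_k$ (for $k = \infty,1,2,\dots$) are Borel and mutually disjoint, and the set where the multiplicity is $0$ is $\mu$-null. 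Put $\mu_k := \mu|_{A_k}$, which are mutually singular by construction, and let $\h_k$ be a Hilbert space with $\dim\h_k = k$ (e.g.\ $\C^k$, or $\ell^2$ when $k = \infty$). Enumerating, for each $\zeta \in A_k$, the indices $n$ with $\zeta \in S_n$ in increasing order as $n_1(\zeta) < n_2(\zeta) < \cdots$, the assignment $(f_n)_n \mapsto \big(\zeta \mapsto (f_{n_1(\zeta)}(\zeta), f_{n_2(\zeta)}(\zeta),\dots)\big)$ should give a unitary from $\bigoplus_n L^2(\mu|_{S_n \cap A_k},\T)$ onto $\mathscr{L}^2(\mu_k,\h_k)$ carrying $\bigoplus_n M_\xi$ to $\M_{\xi}^{(k)}$. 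Summing over $k$ and composing with the unitary of Stage 1 produces the required $\mathcal{I}$.

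\emph{Main obstacle.} The technical heart is Stage 2: one must choose Borel versions of the densities $w_n$, check that $\zeta \mapsto n_j(\zeta)$ is measurable --- which it is, since $\{n_j = \ell\}$ is a Borel combination of the $S_n$ --- and verify carefully that the fibrewise map is a well-defined surjective isometry onto $\mathscr{L}^2(\mu_k,\h_k)$ respecting the $M_\xi$-action. Mutual singularity of the $\mu_k$ is then immediate from disjointness of the $A_k$. Uniqueness of the measure classes $[\mu_k]$ --- not asserted in the statement --- would require the theory of scalar-valued spectral measures, in the spirit of Theorem \ref{s9dufioiskldfgf}, but is not needed for existence.
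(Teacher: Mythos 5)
The paper does not prove this statement at all: it is imported verbatim from Conway \cite[Ch.~IX, Theorem 10.20]{ConwayFA} as a black box, so there is no in-paper argument to compare yours against. Your two-stage proof of the existence part is correct and is the standard self-contained route: the Zorn/exhaustion cyclic decomposition $\h=\bigoplus_n[\vec{x}_n]_{U,U^*}$ with each piece carried to $(L^2(\nu_n),M_\xi)$, followed by domination by $\mu=\sum_n 2^{-n}\nu_n/\|\nu_n\|$, passage to $L^2(\mu|_{S_n})$ via $f\mapsto\sqrt{w_n}f$, and the fibrewise regrouping over the level sets $A_k$ of the counting function. The measurability points you flag (Borel versions of $w_n$, measurability of $\zeta\mapsto n_j(\zeta)$, the Tonelli computation showing the reassembly map is a surjective isometry intertwining the $M_\xi$-actions) are exactly the ones that need checking, and they all go through; mutual singularity of the $\mu_k$ is indeed free from disjointness of the $A_k$. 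The one contrast worth recording: Conway's treatment reaches the same normal form through the theory of Abelian von Neumann algebras, separating vectors, and scalar-valued spectral measures (machinery the paper does invoke elsewhere, e.g.\ in Theorem \ref{s9dufioiskldfgf}), which also delivers uniqueness of the measure classes $[\mu_k]$ and of the multiplicity function; your more elementary counting argument buys existence only, and you correctly note that uniqueness --- not asserted in the statement and not needed by the paper --- would require the additional ordered-decomposition or scalar-spectral-measure input.
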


The main driver of the results of this section is the following.

\begin{Theorem}\label{th1.2}
Let $U$ be a unitary operator on a separable Hilbert space $\h$ with spectral representation as in Theorem \ref{spectraltheorem}. For a conjugation $C$ on $\h$, the following are equivalent.
\begin{enumerate}
  \item $C \in \mathscr{C}_s(U)$,
      \item For each $j=\infty,1,2,\dots$ there are conjugations $\CCC_{j}$ on $\mathscr{L}^{2}(\mu_j,\h_j)$  such that $\CCC_j \in \mathscr{C}_{s}(\M_{\xi}^{(i)})$ and
      \begin{equation*}
        C={\II}^{*}\Big(\bigoplus \CCC_{j}\Big)\II.
      \end{equation*}
      \item For each  $j=\infty,1,2,\dots$ there are $\CC_j \in \mathscr{L}^{\infty}(\mu_j, \mathscr{A}\!\mathcal{B}(\h_j))$  such that $\CC_{j}(\xi)$ is a conjugation for $\mu_{j}$ almost every $\xi \in \T$ and
      \begin{equation*}
        C={\II}^{*}\Big(\bigoplus \A_{\CC_j}\Big)\II.
      \end{equation*}
      \item For each  $i=\infty,1,2,\dots$ and for any conjugation $J_i$ on $\h_i$ there is a  $\U_i \in \mathscr{L}^{\infty}(\mu_i, \mathcal{B}(\h_i))$ such that $\U_i(\xi)$ is unitary  and  $J_i$--symmetric for $\mu_i$-almost every $\xi \in \T$ and
      \begin{equation*}
        C={\II}^{*}\Big(\bigoplus\U_i \J_i \Big)\II={\II}^{*}\Big(\bigoplus\U_i \Big)\Big(\bigoplus \J_i \Big)\II.
      \end{equation*}
\end{enumerate}
\end{Theorem}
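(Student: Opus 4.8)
The plan is to reduce everything, via the isometric isomorphism $\II$ from the Spectral Theorem, to a statement about the inflated bilateral shift $\bigoplus_i \M_{\xi}^{(i)}$, and then to apply the machinery of Sections \ref{section3} and \ref{bilarererer} summand-by-summand. The logical skeleton will be: $(a)\iff(b)$ is the core step; $(b)\iff(c)$ is just the identification of conjugations on $\mathscr{L}^2(\mu_j,\h_j)$ intertwining $\M_\xi^{(j)}$ with $\M_{\bar\xi}^{(j)}$ as the operators $\A_{\CC_j}$ with $\CC_j(\xi)$ a conjugation a.e., which is exactly the equivalence $(a)\iff(b)$ of Theorem \ref{jjHHbbhGHJK}; and $(c)\iff(d)$ is the factorization $\CC_j(\xi)=\U_j(\xi)J_j$ with $\U_j(\xi)=\CC_j(\xi)J_j$ unitary and $J_j$-symmetric, which is Proposition \ref{p3.4}(a) (equivalently $(b)\iff(c)$ of Theorem \ref{jjHHbbhGHJK}), applied pointwise in $\xi$ and then in $j$. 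So the only real content is $(a)\iff(b)$.

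For $(a)\Longrightarrow(b)$: given $C\in\mathscr{C}_s(U)$, Proposition \ref{bnjiovbhucfgyu} (with $W=\II$) shows $\widetilde C:=\II C\II^{*}\in\mathscr{C}_s\big(\bigoplus_i\M_\xi^{(i)}\big)$, i.e.\ $\widetilde C$ is a conjugation on $\bigoplus_i\mathscr{L}^2(\mu_i,\h_i)$ with $\widetilde C\big(\bigoplus_i\M_\xi^{(i)}\big)\widetilde C=\bigoplus_i\M_{\bar\xi}^{(i)}$. The key point is that $\widetilde C$ respects the direct-sum decomposition, i.e.\ $\widetilde C=\bigoplus_j\CCC_j$ with each $\CCC_j$ a conjugation on $\mathscr{L}^2(\mu_j,\h_j)$. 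This is where I would invoke Theorem \ref{conj_dec}: the $\mu_j$ are mutually singular, and the summand $\mathscr{L}^2(\mu_j,\h_j)$ is precisely the reducing subspace $\h_{\mu_j}$ of the big unitary (its elementary measures are exactly those absolutely continuous with respect to $\mu_j$, by mutual singularity together with the multiplicity structure). Theorem \ref{conj_dec}(b) then gives $\widetilde C\,\h_{\mu_j}=\h_{\mu_j}$ and $\widetilde C=\CCC_j\oplus\CCC_j^{\perp}$ with $\CCC_j\in\mathscr{C}_s(\M_\xi^{(j)})$; iterating over $j$ (or rather observing that the orthogonal complement of one summand is the sum of the others, and that $\h=\bigoplus_j\h_{\mu_j}$) yields $\widetilde C=\bigoplus_j\CCC_j$. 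Then $C=\II^{*}\widetilde C\,\II=\II^{*}\big(\bigoplus_j\CCC_j\big)\II$. Conversely, $(b)\Longrightarrow(a)$ is immediate: if each $\CCC_j\in\mathscr{C}_s(\M_\xi^{(j)})$, then $\bigoplus_j\CCC_j$ is a conjugation on the big space intertwining $\bigoplus_j\M_\xi^{(j)}$ and $\bigoplus_j\M_{\bar\xi}^{(j)}$, and conjugating back by $\II$ and using Proposition \ref{bnjiovbhucfgyu} again gives $C\in\mathscr{C}_s(U)$.

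Finally $(b)\iff(c)\iff(d)$: apply Theorem \ref{jjHHbbhGHJK} to each $\M_\xi^{(j)}$ on $\mathscr{L}^2(\mu_j,\h_j)$, noting that $\M_\xi^{(j)}$ is literally the operator called $\M_\xi$ in that theorem with $m$ replaced by $\mu_j$ and $\m$ replaced by $\h_j$ (the proof in \cite{MR4169409} only uses that $\mu_j$ is a finite positive Borel measure on $\T$, not that it is Lebesgue measure, and nothing changes); for $(d)$ use that for \emph{any} conjugation $J_j$ on $\h_j$ one may write $\CC_j(\xi)=\U_j(\xi)J_j$ with $\U_j(\xi):=\CC_j(\xi)J_j$ unitary, and $J_j$-symmetry of $\U_j(\xi)$ is equivalent to $\CC_j(\xi)$ being a conjugation by Proposition \ref{p3.4}(a) applied fiberwise, while $\J_j$ on $\mathscr{L}^2(\mu_j,\h_j)$ is defined from $J_j$ as in \eqref{e1.6}, so $\A_{\CC_j}=\M_{\U_j}\J_j$. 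Assembling the summands gives the two displayed formulas in $(d)$.

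The main obstacle is the first half of $(a)\iff(b)$, namely showing that a conjugation $\widetilde C$ that intertwines $\bigoplus_j\M_\xi^{(j)}$ with $\bigoplus_j\M_{\bar\xi}^{(j)}$ must be block-diagonal with respect to the multiplicity decomposition. Everything else is bookkeeping with earlier results, but this step requires correctly identifying each multiplicity summand $\mathscr{L}^2(\mu_j,\h_j)$ as a space $\h_\nu$ of the form appearing in Theorem \ref{conj_dec} and then invoking part (b) of that theorem — the care needed is in checking that the mutual singularity of the $\mu_j$ really does force $\h_{\mu_j}$ to equal the $j$-th summand and no more, so that the pieces $\CCC_j$ land in the right spaces.
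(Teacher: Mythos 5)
Your proposal is correct, but it reaches the key block-diagonality step by a genuinely different route than the paper. The paper proves $(a)\Rightarrow(d)\Rightarrow(c)\Rightarrow(b)\Rightarrow(a)$, and in $(a)\Rightarrow(d)$ it obtains the direct-sum structure essentially for free: it observes that $\widetilde{\CC}\,\widetilde{\J}$ is a \emph{unitary} commuting with $\widetilde{\M}_\xi$ and then invokes the commutant description attached to the multiplicity version of the spectral theorem \cite[Ch.~IX, Thm.~10.20]{ConwayFA}, which says that such a unitary is automatically of the block-diagonal multiplication form $\bigoplus\M_{\U_i}$; the $J_i$-symmetry of each $\U_i(\xi)$ then follows from Proposition \ref{p3.4}. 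You instead prove $(a)\Leftrightarrow(b)$ first, deriving block-diagonality from Theorem \ref{conj_dec} by identifying each summand $\mathscr{L}^2(\mu_j,\h_j)$ with the reducing subspace $\h_{\mu_j}$ via the mutual singularity of the $\mu_j$ --- a correct identification, and one that is arguably more in the spirit of the paper's advertised ``main driver.'' The one place where your citation chain is weaker than the paper's is the passage $(b)\Leftrightarrow(c)\Leftrightarrow(d)$: you apply Theorem \ref{jjHHbbhGHJK} with $\mu_j$ in place of Lebesgue measure $m$, asserting that the proof in \cite{MR4169409} generalizes. That assertion is true (the underlying fact is the commutant description of $\M_\xi$ on $\mathscr{L}^2(\mu,\h)$ for general $\mu$), but note that the paper itself deliberately avoids this: it derives the general-$\mu$ statement (Theorem \ref{new1}) \emph{as a corollary} of Theorem \ref{th1.2}, obtaining what you need directly from the commutant theorem rather than from the $m$-specific Theorem \ref{jjHHbbhGHJK}. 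So your argument is sound, but to make it airtight you should either justify the extension of Theorem \ref{jjHHbbhGHJK} to general $\mu$ explicitly (via the commutant of $\M_\xi$ on $\mathscr{L}^2(\mu_j,\h_j)$) or run the commutant argument on each summand as the paper does globally; each approach buys something --- yours localizes the problem to reducing subspaces early and keeps the spectral-multiplicity machinery to a minimum, while the paper's gets the decomposition and the multiplier form in a single stroke.
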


\begin{proof}

$(a) \Longrightarrow (d)$:  Set 
\begin{equation}\label{888Uuu88UU88u99}
\widetilde{\M}_{\xi} := \mathcal{I}U\mathcal{I}^{*} = \M_{ \xi}^{(\infty)} \oplus \M_{ \xi}^{(1)} \oplus \M_{ \xi}^{(2)} \oplus \cdots,
\end{equation} and, for a conjugation $C$ on $\h$ define $\widetilde{\!\CC}=\mathcal{I}C\mathcal{I}^{*}$ which, by Lemma \ref{lem1.2}, will be a conjugation on
$$\mathbf{L}^2_{\mathcal{H}} := \mathscr{L}^2(\mu_\infty,\h_\infty)\oplus \mathscr{L}^2(\mu_1,\h_1)\oplus  \mathscr{L}^2(\mu_2,\h_2)\oplus \cdots.$$ Assuming that $C U C = U^{*}$, we see that 
$  \widetilde{\!\CC}\,\widetilde{\M}_\xi \,\widetilde{\!\CC}=\widetilde{\M}_{\bar \xi}.
$

Let $J_i$ be any a conjugation on $\h_i$. Then, as in \eqref{e1.6}, this induces a conjugation $\J_i$ on $\mathscr{L}^2(\mu_i,\h_i)$ defined by
\begin{equation}\label{e1.9}
  (\J_i\f_i)(\xi)=J_i(\f_i(\xi)), \; \;  \f_i\in \mathscr{L}^2(\mu_i,\h_i).
\end{equation}
Define a conjugation\  $\widetilde{\!\J}$ on $\LL^2_{\h}$ by
$$
  \widetilde{\!\J}:=\J_{\infty} \oplus \J_1\oplus \J_2\cdots.
$$
From \eqref{e1.6} we see that\
$
  \widetilde{\!\J}\,\widetilde{\M}_\xi\,\widetilde{\!\J}=\widetilde{\M}_{\bar \xi}.
$
Now observe that
\begin{equation*}
  \widetilde{\M}_\xi (\widetilde{\!\CC}\,\widetilde{\!\J})=(\widetilde{\!\CC}\, \widetilde{\M}_{\bar \xi})\, \widetilde{\!\J}=(\widetilde{\!\CC}\ \widetilde{\!\J}) \widetilde{\M}_\xi.
\end{equation*}
This says that the operator $\widetilde{\!\CC}\ \widetilde{\!\J}$ commutes with $\widetilde{\M_{\xi}}$. 
The spectral theorem applied to $\widetilde{\M_{\xi}}$ also yields the commutant \cite[p.~ 307, Theorem 10.20]{ConwayFA}, namely
 there are operator valued functions
$
  \U_i\in \mathscr{L}^{\infty}(\mu_i,\mathcal{B}(\h_i))$
 for $ i=\infty,1,2,\dots$
such that
\begin{equation*}
  \widetilde{\!\CC}\ \widetilde{\!\J}=\M_{\U_{\infty}}\oplus {\M}_{\U_1}\oplus {\M}_{\U_{2}} \oplus \cdots.
\end{equation*}
Since $\widetilde{\!\CC}\ \widetilde{\!\J}$ is unitary (being linear, isometric, and onto), it follows that each $\widetilde{\M}_{\U_{i}}$ is unitary and, consequently,  $\U_{i}(\xi)$ is unitary for $\mu_i$-almost every $\xi \in \T$.
Therefore,
\begin{equation*}
  \widetilde{\!\CC}=\Big(\bigoplus \M_{\U_i}\Big)\Big(\bigoplus \J_{i}\Big)=\bigoplus (\M_{\U_i}\J_i).
\end{equation*}
Since each\  $\widetilde{\!\CC}|_{\mathscr{L}^2(\mu_i,\h_i)}$ is a conjugation, Proposition \ref{p3.4} says that  the operator $\U_{i}(\xi)$ is $J_{i}$--symmetric $\mu_i$ almost everywhere. Thus, we have verified the implication 
$(a) \Longrightarrow (d)$.

$(d) \Longrightarrow (c)$: For each $i = \infty, 1, 2, \ldots$, take $\CC_{i}=\U_{i}\J_{i}$.

$(c) \implies (b)$: For each $i = \infty, 1, 2, \ldots$, it is enough to take $\CCC_i=\A_{\CC_i}$ and  prove  that
$$\A_{\CC_{i}}\M^{(i)}_{\xi}=\M^{(i)}_{\bar \xi}\A_{\CC_{i}}.$$
Indeed,  for each $\f_{i} \in \mathscr{L}^2(\mu_i, \h_i)$ we have
\begin{align*}
(\A_{\CC_{i}}\M^{(i)}_\xi\f_i)(\xi) & =\CC_{i}(\xi)(\M^{(i)}_{\xi}\f_i)(\xi)\\
&=\CC_{i}(\xi)(\xi \f_i(\xi))\\
&=\bar \xi\CC_{i}(\xi)\f_i(\xi)\\
&= \bar \xi (\A_{\CC_i} \f_i)(\xi)\\
&=(\M^{(i)}_{\bar \xi}\A_{\CC_{i}}\f_i)(\xi).
\end{align*}

$(b) \Longrightarrow (a)$: Recalling the notation from \eqref{888Uuu88UU88u99}, note that
\begin{align*}
CUC & =\II\Big(\bigoplus \CCC_{i}\Big)\Big(\bigoplus \M^{(i)}_\xi\Big)\Big(\bigoplus \CCC_{i}\Big)\II^{*}\\
& =\II\Big(\bigoplus \CCC_{\,i} \M^{(i)}_\xi \CCC_{i}\Big)\II^{*}\\
&=\II\Big(\bigoplus \M^{(i)}_{\bar \xi}\Big)\II^{*}\\
&=\II\widetilde{\M_{\bar \xi}}\II^{*}
=U^*. \qedhere
\end{align*}
\end{proof}
The multiplication operator $\M_\xi$ on $\mathscr{L}^2(\mu,\h)$ is a special case of Theorem \ref{th1.2} -- which we record here for what follows.

\begin{Theorem}\label{new1}
Let $\mu \in M_{+}(\T)$ and $\CCC$ be a conjugation on $\mathscr{L}^2(\mu, \h)$. Then following are equivalent.
\begin{enumerate}
  \item $\CCC\in \mathscr{C}_s(\M_{\xi})$.
    \item
 There are $\CC \in \mathscr{L}^{\infty}(\mu, \mathscr{A}\!\mathcal{B}(\h))$  such that $\CC(\xi)$ is a conjugation for $\mu$ almost every $\xi\in \T$ and
  $\CCC= \A_{\CC} $.
  \item For any conjugation $J$ on $\h$ there is a  $\U \in \mathscr{L}^{\infty}(\mu, \mathcal{B}(\h))$ such that $\U(\xi)$ is  unitary and $J$--symmetric for $\mu$ almost every $\xi \in \T$ and  $ \CCC=\M_\U \J $.
\end{enumerate}
\end{Theorem}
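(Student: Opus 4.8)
The plan is to recognize Theorem \ref{new1} as the single-summand instance of Theorem \ref{th1.2}: apply that theorem to the unitary operator $\M_\xi$ on $\mathscr{L}^2(\mu,\h)$, whose multiplicity decomposition from Theorem \ref{spectraltheorem} consists of the single nonzero piece $\mathscr{L}^2(\mu,\h)$ (of uniform multiplicity $\operatorname{dim}\h$), with $\II$ the identity. Since a reader may prefer a self-contained argument, I would also record the direct proof, which runs along the cycle $(a)\Rightarrow(c)\Rightarrow(b)\Rightarrow(a)$ and uses only three ingredients already available: the description of the commutant of $\M_\xi$ on $\mathscr{L}^2(\mu,\h)$ as $\{\M_\U:\U\in\mathscr{L}^\infty(\mu,\mathcal{B}(\h))\}$ (see \cite[Ch.~III]{MR2003221}), the identity \eqref{Jsjjassayyyy}, and Proposition \ref{p3.4}.

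For $(a)\Rightarrow(c)$, fix any conjugation $J$ on $\h$ and let $\J$ be the induced conjugation on $\mathscr{L}^2(\mu,\h)$ from \eqref{e1.6}, so that $\J\M_\xi\J=\M_{\bar\xi}$ by \eqref{Jsjjassayyyy}. Assuming $\CCC\M_\xi\CCC=\M_{\bar\xi}$, the operator $\CCC\J$ is linear, isometric, and onto, hence unitary, and the same manipulation used in the proof of Theorem \ref{th1.2} gives $\M_\xi(\CCC\J)=(\CCC\M_{\bar\xi})\J=(\CCC\J)\M_\xi$, so $\CCC\J$ lies in the commutant of $\M_\xi$. Therefore $\CCC\J=\M_\U$ for some $\U\in\mathscr{L}^\infty(\mu,\mathcal{B}(\h))$, and since $\M_\U$ is unitary, $\U(\xi)$ is unitary for $\mu$-almost every $\xi$. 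Then $\CCC=\M_\U\J$, and because $\CCC$ is involutive, Proposition \ref{p3.4}(a) forces $\U(\xi)$ to be $J$-symmetric for $\mu$-almost every $\xi$.

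For $(c)\Rightarrow(b)$, set $\CC(\xi):=\U(\xi)J$; this is antilinear and isometric, and $\CC(\xi)^2=\U(\xi)J\U(\xi)J=\U(\xi)\U(\xi)^*=I$ using the $J$-symmetry of $\U(\xi)$, so $\CC(\xi)$ is a conjugation on $\h$ for $\mu$-almost every $\xi$; moreover $\CC\in\mathscr{L}^\infty(\mu,\mathscr{A}\!\mathcal{B}(\h))$ (since $\|\CC(\xi)\|=1$ a.e.\ and $\xi\mapsto\CC(\xi)$ is measurable) and $\A_\CC=\M_\U\J=\CCC$. For $(b)\Rightarrow(a)$, a pointwise computation suffices: for $\f\in\mathscr{L}^2(\mu,\h)$ and $\mu$-a.e. $\xi$,
\begin{equation*}
(\A_\CC\M_\xi\A_\CC\f)(\xi)=\CC(\xi)\big(\xi\,(\A_\CC\f)(\xi)\big)=\bar\xi\,\CC(\xi)^2\f(\xi)=\bar\xi\,\f(\xi)=(\M_{\bar\xi}\f)(\xi),
\end{equation*}
where antilinearity of $\CC(\xi)$ pulls out $\bar\xi$ and $\CC(\xi)^2=I$; hence $\CCC\M_\xi\CCC=\M_{\bar\xi}$, i.e., $\CCC\in\mathscr{C}_s(\M_\xi)$.

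I do not expect a serious obstacle: once the commutant theorem for $\M_\xi$ on $\mathscr{L}^2(\mu,\h)$ and Proposition \ref{p3.4} are in hand, every step is a routine algebraic or pointwise verification. The only point warranting a sentence of care is the passage from the operator identities ``$\M_\U$ is unitary'' and ``$\M_\U\J$ is involutive'' to the $\mu$-almost-everywhere statements that $\U(\xi)$ is unitary and $J$-symmetric; the first is a standard fact about multiplication operators on vector-valued $L^2$ spaces, and the second is exactly Proposition \ref{p3.4}(a).
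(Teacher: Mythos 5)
Your proposal is correct and matches the paper exactly: the paper offers no separate proof of Theorem \ref{new1}, stating only that it is the special case of Theorem \ref{th1.2} in which the multiplicity decomposition has a single summand, which is precisely your primary plan. Your supplementary direct argument is sound but is just the proof of Theorem \ref{th1.2} restricted to one summand (same three ingredients: the commutant description, \eqref{Jsjjassayyyy}, and Proposition \ref{p3.4}), so it does not constitute a genuinely different route.
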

As an application of the above, we have the following connection between conjugations and hyperinvariant subspaces. 

\begin{Proposition}\label{new2}
Let $\mu \in M_{+}(\T)$. If $\mathcal{K}\subset \mathscr{L}^2(\mu, \h)$ is an invariant subspace  for every $\CCC\in \mathscr{C}_s(\M_\xi)$, then $\mathcal{K}$ is hyperinvariant for $\M_\xi$.
\end{Proposition}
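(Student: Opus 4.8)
The plan is to prove the contrapositive-flavored statement directly: assume $\mathcal{K}$ is invariant for every $\CCC \in \mathscr{C}_s(\M_\xi)$ and show that $T\mathcal{K} \subset \mathcal{K}$ for every $T \in \mathcal{B}(\mathscr{L}^2(\mu,\h))$ commuting with $\M_\xi$. First I would invoke the commutant description built into Theorem \ref{new1} (and the spectral theorem reference \cite[p.~307, Theorem 10.20]{ConwayFA} used in the proof of Theorem \ref{th1.2}): any $T$ commuting with $\M_\xi$ is of the form $T = \M_\U$ for some $\U \in \mathscr{L}^\infty(\mu, \mathcal{B}(\h))$. Fix a conjugation $J$ on $\h$, so $\J$ is the induced conjugation on $\mathscr{L}^2(\mu,\h)$ with $\J \M_\xi \J = \M_{\bar\xi}$; note $\J$ itself lies in $\mathscr{C}_s(\M_\xi)$ (take $\U \equiv I$ in part (c) of Theorem \ref{new1}), so $\J \mathcal{K} = \mathcal{K}$ by Lemma \ref{nnNDF99}.

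The key idea is that a bounded operator is controlled by the conjugations it can be written alongside. Given $T = \M_\U$ with $\U$ unitary-valued and $J$-symmetric a.e., Theorem \ref{new1}(c) says $\CCC := \M_\U \J \in \mathscr{C}_s(\M_\xi)$, hence $\CCC \mathcal{K} \subset \mathcal{K}$; combined with $\J\mathcal{K} = \mathcal{K}$ this gives $\M_\U \mathcal{K} = \CCC \J \mathcal{K} = \CCC \mathcal{K} \subset \mathcal{K}$. So $\mathcal{K}$ is invariant for every multiplication operator $\M_\U$ where $\U(\xi)$ is unitary and $J$-symmetric a.e. The remaining work is a density/spanning argument: show that the collection $\{\M_\U : \U(\xi) \text{ unitary and } J\text{-symmetric a.e.}\}$ spans (in a suitable topology, or via linear combinations) the whole commutant $\{\M_\U : \U \in \mathscr{L}^\infty(\mu,\mathcal{B}(\h))\}$, or at least enough of it to force $\M_\V \mathcal{K} \subset \mathcal{K}$ for arbitrary $\V$.

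For the spanning step I would argue pointwise and then globalize. For a single (separable) Hilbert space $\h$ with conjugation $J$, every $B \in \mathcal{B}(\h)$ can be written as a finite linear combination of $J$-symmetric unitaries: first decompose $B$ into its $J$-real and $J$-imaginary parts $B = (B + JB^*J)/2 + i\,(B - JB^*J)/(2i)$, each of which is $J$-symmetric (using $J(\,\cdot\,)J$ is an involutive conjugate-linear $\ast$-antiautomorphism, and that $JB^*J$ is $J$-symmetric iff $B$ is --- here one checks $C$-symmetric operators form a $J(\cdot)J$-real-linear subspace); then a $J$-symmetric contraction can be written as a combination of $J$-symmetric unitaries via the standard trick $S = \frac{1}{2}\big((S + i\sqrt{I - S^*S}) + (S - i\sqrt{I-S^*S})\big)$, checking that $\sqrt{I - S^*S}$ is again $J$-symmetric because $S^*S = J S \bar{S} J$-type identities hold and the continuous functional calculus intertwines with $J(\cdot)J$. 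Doing this measurably in $\xi$ requires that the functional-calculus constructions can be chosen $\mu$-measurably, which is routine since the operations ($B \mapsto JB^*J$, square root of a positive operator) are continuous/measurable in the strong topology. Scaling back the norm bound by a uniform constant keeps everything in $\mathscr{L}^\infty$. Hence any $\M_\V$ with $\|\V\|_\infty$ bounded is a finite linear combination of $\M_\U$'s of the allowed type, so $\M_\V \mathcal{K} \subset \mathcal{K}$, and by linearity this holds for all $\V \in \mathscr{L}^\infty(\mu,\mathcal{B}(\h))$.

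The main obstacle I anticipate is precisely this last pointwise-to-measurable decomposition: writing an arbitrary bounded operator as a (bounded, finite) linear combination of $J$-symmetric unitaries with the coefficients and operator-valued functions depending measurably on $\xi$, and with a uniform norm control so the pieces stay in $\mathscr{L}^\infty(\mu, \mathcal{B}(\h))$. The algebraic identities are standard for a fixed Hilbert space; the care is in the measurable selection. An alternative route that sidesteps some of this is to note that $\{\M_\U \J : \U \text{ unitary}, J\text{-symmetric}\} \cdot \J$ already generates, and one only needs the unitaries (not arbitrary operators) to span the commutant weakly --- the von Neumann algebra generated by these unitaries is all of $\{\M_\U\}$ --- so a weak-operator-closure argument (invariant subspaces are weakly closed) finishes it without an explicit finite combination. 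Either way, the conceptual core is the two-line computation $\M_\U\mathcal{K} = \CCC\J\mathcal{K} = \CCC\mathcal{K}\subset\mathcal{K}$, and everything else is the linear-algebraic/measure-theoretic bookkeeping to pass from the $J$-symmetric unitary multipliers to the full commutant.
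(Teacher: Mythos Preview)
Your core computation $\M_\U\mathcal{K}=\CCC\J\mathcal{K}\subset\mathcal{K}$ is exactly right, and it is the same engine the paper uses. The gap is in your spanning step. The map $B\mapsto JB^*J$ is a \emph{complex}-linear involution on $\mathcal{B}(\h)$ (the antilinearity of $J$ and of $*$ cancel), so the $J$-symmetric operators form a $\C$-linear subspace, not merely an $\R$-linear one. Concretely, your ``$J$-imaginary part'' $(B-JB^*J)/(2i)$ satisfies $J\big((B-JB^*J)/(2i)\big)J = -\big((B-JB^*J)/(2i)\big)^*$, i.e.\ it is $J$-\emph{skew}-symmetric, not $J$-symmetric; multiplying a $J$-skew-symmetric operator by $i$ does not make it $J$-symmetric. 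Hence no linear combination of $J$-symmetric unitaries (for a fixed $J$) can reach an operator outside that proper subspace, and your decomposition argument collapses.

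The paper sidesteps this by \emph{varying the conjugation}. It first takes scalar unimodular symbols $u\in L^\infty(\mu)$ (trivially $J$-symmetric for any $J$), deducing that $\mathcal{K}$ is invariant for all scalar multiplications $\M_v$, $v\in L^\infty(\mu)$. Then, for each constant unitary $U_0\in\mathcal{B}(\h)$, it invokes Proposition~\ref{GL} to choose a conjugation $J_0$ (depending on $U_0$) with $J_0U_0J_0=U_0^*$; since both $\J_0$ and $\M_{U_0}\J_0$ lie in $\mathscr{C}_s(\M_\xi)$, one gets $\M_{U_0}\mathcal{K}\subset\mathcal{K}$. The von Neumann algebra generated by $\{\M_v:v\in L^\infty(\mu)\}$ together with the constant operator-valued functions is all of $\mathscr{L}^\infty(\mu,\mathcal{B}(\h))$, and the weak closure argument finishes. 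Your alternative route can in fact be repaired along similar lines: every unitary on $\h$ is a product of two $J$-symmetric unitaries (combine Proposition~\ref{GL} with Proposition~\ref{9we8rouigjfledw}), so the group generated by $J$-symmetric unitaries is all unitaries; but making this factorization measurable in $\xi$ is exactly the kind of bookkeeping you flagged, and the paper's two-step (scalars $+$ constants) decomposition avoids it entirely.
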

\begin{proof} For any fixed conjugation $J$ on $\h$, \eqref{Jsjjassayyyy} says that $\J\in \mathscr{C}_s(\M_\xi)$ and  thus $\mathcal{K}$ is invariant for $\J$. By Theorem \ref{new1},  $\mathcal{K}$ is also invariant for all of the conjugations $\M_u\J$, where $u \in L^\infty(\mu) $ is unimodular $\mu$-almost everywhere.  Therefore,  $\mathcal{K}$ is invariant for every $\M_u=\M_u\J\J$, where  $u\in L^\infty(\mu)$ is unimodular. From here, one can argue that $\K$ is also invariant for $\M_{\chi_\Omega}$ for any Borel set $\Omega\subset \sigma(U)$ (indeed let $u =  1$ on $\Omega$ and $-1$ on $\T \setminus \Omega$ and note that $\chi_{\Omega} = (1 + u)/2$) and, consequently, for any $\M_v$ where $v\in L^\infty(\mu)$.


Now fix any unitary $U_0$ on $\h$ and  let ${\U}_0  \equiv U_0 \in \mathscr{L}^{\infty}(\mu, \mathcal{B(\h)})$ denote the operator-valued constant function. By our discussion in the introduction (see also Proposition \ref{GL}), $U_0$ is $J_0$--symmetric for some conjugation $J_0$. Let $\J_0$ be the conjugation on $\mathscr{L}(\mu,\h)$ given by $(\J_0\f)(\xi)=J_0(\f(\xi))$. Therefore, $\mathcal{K}$ is invariant for $\J_0$ and for $\U_0\J_0$ and thus  for ${\U}_0$.
Similarly, $\K$ is invariant  for $\U_0^*$.  It follows that $\K$ is invariant for the von Neumann algebra containing all constant unitary valued functions in  $\mathcal{B}(\h)$. Finally, $\mathcal{K}$ is invariant   for the  von Neumann algebra generated by $\{\M_{v}: v \in L^{\infty}(\mu)\}$ and the constant operator-valued functions from $\mathscr{L}^{\infty}(\mu, \mathcal{B}(\h))$. From here, one can fashion an argument that $\K$ is invariant for every element of $\mathscr{L}^\infty(\mu, \mathcal{B}(\h))$. Since $\mathscr{L}^\infty(\mu, \mathcal{B}(\h))$ forms the class of operators $\M_{\Phi}$, $\Phi \in \mathscr{L}^\infty(\mu, \mathcal{B}(\h))$, that commute with $\M_{\xi}$,  $\mathcal{K}$ is hyperinvariant for $\M_\xi$.
\end{proof}

From here, we can state our main connection between conjugations and hyperinvariant subspaces.

\begin{Theorem}\label{new3}
Let $U$ be a unitary operator on a separable Hilbert space $\h$.
If $\mathcal{M}\subset \h$  is an invariant subspace  for every  $C\in \mathscr{C}_s(U)$, then $\mathcal{M}$ is hyperinvariant for $U$.
\end{Theorem}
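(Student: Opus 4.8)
```latex
\begin{proof}[Proof proposal]
The plan is to reduce the general statement to the model-operator case handled in Proposition~\ref{new2} via the spectral theorem, and then transport the hyperinvariance back through the unitary $\II$. First I would invoke Theorem~\ref{spectraltheorem} to fix the spectral representation $\II: \h \to \LL^2_{\h} := \bigoplus_{j} \mathscr{L}^2(\mu_j, \h_j)$, so that $\widetilde{\M}_\xi := \II U \II^{*} = \bigoplus_j \M_\xi^{(j)}$. Setting $\widetilde{\mathcal M} := \II \mathcal M$, Proposition~\ref{bnjiovbhucfgyu} tells us $\II \mathscr{C}_s(U) \II^{*} = \mathscr{C}_s(\widetilde{\M}_\xi)$, so the hypothesis that $\mathcal M$ is invariant for every $C \in \mathscr{C}_s(U)$ is equivalent to $\widetilde{\mathcal M}$ being invariant for every $\CCC \in \mathscr{C}_s(\widetilde{\M}_\xi)$; and since hyperinvariance is preserved under unitary equivalence, it suffices to show $\widetilde{\mathcal M}$ is hyperinvariant for $\widetilde{\M}_\xi$.

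Next I would exploit the direct-sum structure. For each index $j$, let $P_j$ be the orthogonal projection of $\LL^2_{\h}$ onto the summand $\mathscr{L}^2(\mu_j, \h_j)$. Because the measures $\mu_\infty, \mu_1, \mu_2, \dots$ are mutually singular, each $P_j$ lies in the von Neumann algebra generated by $\widetilde{\M}_\xi$ (it is $\M_{\chi_{\Omega_j}}$ for a Borel set $\Omega_j$ carrying $\mu_j$), and in particular $P_j = \widetilde{E}(\Omega_j)$ for the spectral measure $\widetilde E$ of $\widetilde{\M}_\xi$. By Proposition~\ref{4}, every $\CCC \in \mathscr{C}_s(\widetilde{\M}_\xi)$ commutes with $P_j$; combined with Theorem~\ref{th1.2}(b), every such $\CCC$ decomposes as $\bigoplus_j \CCC_j$ with $\CCC_j \in \mathscr{C}_s(\M_\xi^{(j)})$, and conversely any such orthogonal direct sum of conjugations from the factors gives an element of $\mathscr{C}_s(\widetilde{\M}_\xi)$. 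Using this, I would show $\widetilde{\mathcal M} = \bigoplus_j P_j \widetilde{\mathcal M}$: applying the hypothesis to $\CCC = \CCC_j \oplus \bigoplus_{k \ne j} \J_k$ (with $\J_k$ the natural conjugation on the $k$-th factor, which lies in $\mathscr{C}_s(\M_\xi^{(k)})$ by \eqref{Jsjjassayyyy}) for arbitrary $\CCC_j \in \mathscr{C}_s(\M_\xi^{(j)})$, and noting that $P_j \in \{\widetilde{\M}_\xi\}'$ so $P_j\widetilde{\mathcal M}$ is itself invariant for these, one deduces that each $\mathcal M_j := P_j \widetilde{\mathcal M}$ is invariant for every conjugation in $\mathscr{C}_s(\M_\xi^{(j)})$, and that $\widetilde{\mathcal M}$ splits as the orthogonal sum of the $\mathcal M_j$. (The fact that $\widetilde{\mathcal M}$ is the orthogonal sum of its projections onto mutually singular reducing pieces is where I must be a little careful: it follows because each $P_j$ both leaves $\widetilde{\mathcal M}$ invariant — being a limit of polynomials in $\widetilde{\M}_\xi, \widetilde{\M}_\xi^{*}$ applied after one passes through the $\J_k$'s — and $\sum_j P_j = I$.)

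Then Proposition~\ref{new2} applies to each factor: $\mathcal M_j$ is invariant for every $\CCC_j \in \mathscr{C}_s(\M_\xi^{(j)})$, hence $\mathcal M_j$ is hyperinvariant for $\M_\xi^{(j)}$, i.e.\ invariant for every $\Phi \in \mathscr{L}^\infty(\mu_j, \mathcal{B}(\h_j))$ acting as $\M_\Phi$. Finally I would assemble these: by the commutant description of $\widetilde{\M}_\xi$ from the spectral theorem \cite[Ch.~IX, Thm.~10.20]{ConwayFA}, every $T \in \{\widetilde{\M}_\xi\}'$ has the form $\bigoplus_j \M_{\Phi_j}$ with $\Phi_j \in \mathscr{L}^\infty(\mu_j, \mathcal{B}(\h_j))$, so $T \widetilde{\mathcal M} = \bigoplus_j \M_{\Phi_j} \mathcal M_j \subset \bigoplus_j \mathcal M_j = \widetilde{\mathcal M}$. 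Thus $\widetilde{\mathcal M}$ is hyperinvariant for $\widetilde{\M}_\xi$, and transporting back by $\II^{*}$ gives that $\mathcal M$ is hyperinvariant for $U$. The main obstacle I anticipate is the bookkeeping in the second paragraph: justifying cleanly that the hypothesis on $\mathcal M$ forces the \emph{simultaneous} orthogonal splitting $\widetilde{\mathcal M} = \bigoplus_j P_j\widetilde{\mathcal M}$ with each piece inheriting the full invariance hypothesis — everything else is either quoted (Proposition~\ref{new2}, the commutant theorem) or routine transport of structure.
\end{proof}
```
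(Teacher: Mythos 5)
Your proposal is correct and follows essentially the same route as the paper: pass to the spectral representation via $\II$, split $\II\mathcal{M}$ along the mutually singular summands so each piece is invariant for every conjugation in $\mathscr{C}_s(\M_\xi^{(j)})$, apply Proposition~\ref{new2} factor by factor, and assemble with the commutant description from the spectral theorem. The only difference is that you spell out the splitting step (via the spectral projections $P_j$ and the unimodular multipliers $\M_u=(\M_u\J)\J$) that the paper's proof asserts tersely, and your justification of it is sound.
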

\begin{proof}
The notation from Theorem \ref{th1.2} proves that the invariance of $\mathcal{M}$ for all $C \in \mathscr{C}_{s}(U)$ implies that 
 $\II \mathcal{M}=\bigoplus \mathcal{K}_i$, where the subspace $\mathcal{K}_i$ is invariant for all  all $\M_{\U_i}\J_i$, where $\U_i\in \mathscr{L}^\infty(\mu_i, \h_i)$ is unitary valued $\mu$-almost everywhere. Now apply Proposition \ref{new2} to see that  $\mathcal{K}_i$ is hyperinvariant for each $\M_{\xi}^{(i)}$. Finally, using the description of the commutant of $\M_{\xi}$ \cite[p.~307, Theorem IX.10.20]{ConwayFA}, we obtain that  $\II \mathcal{M}$ is hyperinvariant for $\M_{\xi}$. This shows that $\mathcal{M}$ is a hyperinvariant subspace for $U$.
\end{proof}

The simple example bellow illustrates why requirement in Theorem \ref{new3} that $\mathcal{M}$ is  invariant for {\em every} $C\in \mathscr{C}_s(U)$ is an important one.

\begin{Example} Consider the diagonal unitary operator $U\in \mathcal{B}(\C^2)$ defined by $U=\text{diag}[\lambda ,\lambda]$ (where $\lambda \not = 0$) with respect to the standard basis $\{\vec{e}_1,\vec{e}_2\}$ for $\C^2$. The only hyperinvariant subspaces for $U$ are the trivial ones  $\{\vec{0}\}$ and $\C^2$ (since any $2 \times 2$ matrix commutes with $U$). Define two conjugations $C_1$, $C_2$ on $\C^2$ by 
$$C_1(\vec{e}_1)= \vec{e}_1, \; C_1(\vec{e}_2)= \vec{e}_2 \quad \mbox{and} \quad C_2(\vec{e}_1)=\vec{e}_2, \; C_2(\vec{e}_2) = \vec{e}_1$$ 
(and of course extend antilinearity to all of $\C^2$). One can check that  $C_1,C_2\in \mathscr{C}_s(U)$. Note that each  $C_i$ separately has more  invariant subspaces than the trivial ones $\{\vec{0}\}$ and $\C^2$,  but only the trivial subspaces are simultaneously invariant for both of them.
\end{Example}

Our summary theorem connecting hyperinvariant subspaces and conjugations is the following.

\begin{Theorem}\label{LFDKJgnsl;gdhf}
Let $U$ be a unitary operator  on a separable Hilbert space $\h$ with spectral measure $E(\cdot)$. For a subspace  $\mathcal{M} \subset \h$, the following are equivalent.
\begin{enumerate}
\item $\mathcal{M}$ is hyperinvariant; 
\item $\mathcal{M} = E(\Omega) \h$ for some Borel set $\Omega \subset \sigma(U)$;
\item $\mathcal{M} = \h_{\mu}$ for some $\mu \in M_{+}(\T)$; 
\item $C \mathcal{M} \subset \mathcal{M}$ for every $C \in \mathscr{C}_{s}(U)$.
\end{enumerate}
\end{Theorem}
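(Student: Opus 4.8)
The plan is to prove the chain of equivalences (a) $\Leftrightarrow$ (b) $\Leftrightarrow$ (c) $\Leftrightarrow$ (d) by assembling results already established in the paper, so that essentially no new computation is required. The implications (a) $\Leftrightarrow$ (b) and (a) $\Rightarrow$ (c) are already in hand: Theorem~\ref{iuyrituyeriutyhype} (together with the cited \cite[Proposition 6.9]{MR2003221}) gives (a) $\Leftrightarrow$ (b), and the same theorem gives (a) $\Rightarrow$ (d) since if $\mathcal{M}$ is hyperinvariant then it is invariant for every $C \in \mathscr{C}_s(U)$. Also (a) $\Rightarrow$ (c) is Theorem~\ref{s9dufioiskldfgf}, while (c) $\Rightarrow$ (a) is immediate from Theorem~\ref{conj_dec}(a), which says $\h_\mu$ is a reducing subspace, combined with the fact noted right after Theorem~\ref{iuyrituyeriutyhype} that one must still check hyperinvariance --- but actually the cleanest route is: (c) $\Rightarrow$ (b) will follow because $\h_\mu$, being reducing, and using that the commutant of $U$ acts so that $\h_\mu$ is preserved, must be of the form $E(\Omega)\h$; I would instead simply close the loop (c) $\Rightarrow$ (d) and (d) $\Rightarrow$ (a).

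Concretely, here is the order I would carry things out. First, (a) $\Rightarrow$ (b): cite Theorem~\ref{iuyrituyeriutyhype}. Second, (b) $\Rightarrow$ (d): if $\mathcal{M} = E(\Omega)\h$, then since every $C \in \mathscr{C}_s(U)$ commutes with $E(\Omega)$ by Proposition~\ref{4}(b), we get $C\mathcal{M} \subseteq \mathcal{M}$; this is exactly the argument in the proof of Theorem~\ref{iuyrituyeriutyhype}. Third, (d) $\Rightarrow$ (a): this is precisely Theorem~\ref{new3}. Fourth, (a) $\Rightarrow$ (c): this is Theorem~\ref{s9dufioiskldfgf}. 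Fifth, (c) $\Rightarrow$ (a): if $\mathcal{M} = \h_\mu$, then by Theorem~\ref{conj_dec}(a) it is reducing for $U$; to upgrade ``reducing'' to ``hyperinvariant'' one observes that $\h_\mu$ is defined purely in terms of the elementary measures $\mu_{\vec{x}} = \langle E(\cdot)\vec{x},\vec{x}\rangle$, and if $T$ commutes with $U$ then $T$ commutes with $E(\cdot)$, so $\mu_{T\vec{x}} \ll \mu_{\vec{x}} \ll \mu$ for $\vec{x} \in \h_\mu$, whence $T\vec{x} \in \h_\mu$. This last point is a short direct verification analogous to the computation in the proof of Theorem~\ref{conj_dec}(b), with $T$ in place of $C$ and using $\langle E(\Omega)T\vec{x}, T\vec{x}\rangle = \langle T^* T E(\Omega)\vec{x}, \vec{x}\rangle$ plus the Cauchy--Schwarz/absolute-continuity bound $\|E(\Delta)T\vec{x}\|^2 \le \|T\|^2 \|E(\Delta)\vec{x}\|^2$.

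With those five implications the diagram (a) $\Rightarrow$ (b) $\Rightarrow$ (d) $\Rightarrow$ (a) and (a) $\Rightarrow$ (c) $\Rightarrow$ (a) is complete, giving all four equivalences. The main obstacle --- and really the only place any thought is needed --- is the step (d) $\Rightarrow$ (a), i.e.\ Theorem~\ref{new3}, but that is proved earlier in the paper via the spectral-multiplicity decomposition of Theorem~\ref{th1.2} and Proposition~\ref{new2}, so for the present proof I simply invoke it. Everything else is bookkeeping of citations to Proposition~\ref{4}, Theorem~\ref{iuyrituyeriutyhype}, Theorem~\ref{conj_dec}, Theorem~\ref{s9dufioiskldfgf}, and Theorem~\ref{new3}. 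I would write the proof as a short paragraph listing these implications in the order above, spelling out only the elementary reducing-to-hyperinvariant verification in (c) $\Rightarrow$ (a).

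\begin{proof}
We prove $(a) \Rightarrow (b) \Rightarrow (d) \Rightarrow (a)$ and $(a) \Rightarrow (c) \Rightarrow (a)$.

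$(a) \Rightarrow (b)$: This is contained in Theorem \ref{iuyrituyeriutyhype} (see also \cite[Proposition 6.9]{MR2003221}).

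$(b) \Rightarrow (d)$: Suppose $\mathcal{M} = E(\Omega)\h$ for some Borel set $\Omega \subset \sigma(U)$. If $C \in \mathscr{C}_s(U)$, then by Proposition \ref{4}(b) we have $C E(\Omega) C = E(\Omega)$, so $C \mathcal{M} = C E(\Omega) \h = C E(\Omega) C (C \h) = E(\Omega)(C\h) = E(\Omega)\h = \mathcal{M}$. In particular $C\mathcal{M} \subset \mathcal{M}$.

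$(d) \Rightarrow (a)$: This is precisely Theorem \ref{new3}.

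$(a) \Rightarrow (c)$: This is Theorem \ref{s9dufioiskldfgf}.

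$(c) \Rightarrow (a)$: Suppose $\mathcal{M} = \h_{\mu}$ for some $\mu \in M_{+}(\T)$. Let $T \in \mathcal{B}(\h)$ commute with $U$. Then $T$ commutes with every spectral projection $E(\Delta)$, $\Delta$ a Borel subset of $\sigma(U)$. Fix $\vec{x} \in \h_{\mu}$, so that $\mu_{\vec{x}} \ll \mu$, and let $\Delta$ be a Borel set with $\mu(\Delta) = 0$. Then $\|E(\Delta)\vec{x}\|^2 = \mu_{\vec{x}}(\Delta) = 0$, so $E(\Delta)\vec{x} = \vec{0}$, and therefore
\begin{equation*}
\mu_{T\vec{x}}(\Delta) = \|E(\Delta) T \vec{x}\|^2 = \|T E(\Delta) \vec{x}\|^2 = 0.
\end{equation*}
Hence $\mu_{T\vec{x}} \ll \mu$, i.e., $T\vec{x} \in \h_{\mu} = \mathcal{M}$. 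Thus $\mathcal{M}$ is invariant for every operator commuting with $U$, i.e., $\mathcal{M}$ is hyperinvariant.
\end{proof}
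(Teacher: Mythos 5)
Your proof is correct, and it uses the same toolkit as the paper (Theorem \ref{iuyrituyeriutyhype}, Theorem \ref{s9dufioiskldfgf}, Theorem \ref{new3}, Proposition \ref{4}), but the cycle structure differs. The paper closes a single loop $(a) \Rightarrow (b) \Rightarrow (c) \Rightarrow (d) \Rightarrow (a)$, getting $(c) \Rightarrow (d)$ from Theorem \ref{conj_dec} (which already established $C\h_\mu = \h_\mu$ for every $C \in \mathscr{C}_s(U)$); you instead close two loops, $(a) \Rightarrow (b) \Rightarrow (d) \Rightarrow (a)$ and $(a) \Rightarrow (c) \Rightarrow (a)$, which obliges you to supply a direct argument that $\h_\mu$ is hyperinvariant. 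That argument is fine: $TU = UT$ for a unitary $U$ forces $TU^* = U^*T$, hence $T$ commutes with all spectral projections, and your computation $\mu_{T\vec{x}}(\Delta) = \|E(\Delta)T\vec{x}\|^2 = \|TE(\Delta)\vec{x}\|^2 = 0$ whenever $\mu(\Delta)=0$ correctly gives $T\vec{x} \in \h_\mu$. The trade-off is that you prove slightly more than you need (hyperinvariance of $\h_\mu$ is not stated anywhere in the paper, only that it is reducing), whereas the paper economizes by routing $(c)$ through $(d)$ using the conjugation-invariance it had already established; on the other hand your version makes the implication $(c) \Rightarrow (a)$ self-contained and independent of the heavier spectral-multiplicity machinery behind Theorem \ref{new3}, which is only needed for $(d) \Rightarrow (a)$.
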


\begin{proof}
$(a) \Longrightarrow (b)$ is Theorem \ref{iuyrituyeriutyhype}. $(b) \Longrightarrow (c)$ is Theorem \ref{s9dufioiskldfgf}.  $(c) \Longrightarrow (d)$ is Theorem \ref{conj_dec}.  $(d) \Longrightarrow (a)$ is Theorem \ref{new3}.
\end{proof}

\begin{Remark}
One can use Theorem \ref{th1.2} to give an alternate proof of Theorem \ref{kjahfgr4iojtegefeds00} (the description of $\mathscr{C}_{s}(U)$ when $U$ is an $n \times n$ unitary matrix). The same is true for Examples \ref{Foureoirtert} and \ref{Hilertrt}. 
\end{Remark}

\bibliographystyle{plain}

\bibliography{references}

\end{document}